\newcommand{\vcomp}[2]{\genfrac{[}{]}{0pt}{}{#1}{#2}}
\newcommand{\bbA}{\mathbb{A}}
\newcommand{\bbC}{\mathbb{C}}
\newcommand{\bbD}{\mathbb{D}}
\newcommand{\bbE}{\mathbb{E}}
\newcommand{\bbH}{\mathbb{H}}
\newcommand{\bbM}{\mathbb{M}}
\newcommand{\bbQ}{\mathbb{Q}}
\newcommand{\bbV}{\mathbb{V}}
\newcommand{\bbX}{\mathbb{X}}
\newcommand{\Sp}{\mathbb{S}\text{\rm pan}}
\newcommand{\bEnd}{\bbE\text{\rm nd}}
\newcommand{\bMnd}{\bbM\text{\rm nd}}
\newcommand{\Qbar}{\overline{\bbQ}}
\newcommand{\bfA}{\mathbf{A}}
\newcommand{\bfC}{\mathbf{C}}
\newcommand{\bfH}{\mathbf{H}}
\newcommand{\bfV}{\mathbf{V}}
\newcommand{\bfX}{\mathbf{X}}
\newcommand{\bfK}{\mathbf{K}}
\newcommand{\Obj}{\operatorname{Obj}}
\newcommand{\Mor}{\operatorname{Mor}}
\newcommand{\Hor}{\operatorname{Hor}}
\newcommand{\Ver}{\operatorname{Ver}}
\newcommand{\Sq}{\operatorname{Sq}}
\newcommand{\Cat}{\text{\sf Cat}}
\newcommand{\End}{\operatorname{End}}
\newcommand{\Mnd}{\operatorname{Mnd}}
\newcommand{\Und}{\operatorname{Und}}
\newcommand{\Inc}{\operatorname{Inc}}
\newcommand{\Alg}{\text{-}\mathrm{Alg}}
\newcommand{\co}{\colon\thinspace}
\newcommand{\tb}[1]{\phantom{\sum^\Sigma_\Sigma} #1 \phantom{\sum^\Sigma_\Sigma}}
\newcommand{\lr}[1]{\hspace{.5mm}#1\hspace{.5mm}}
\newcommand{\pr}{\text{\rm pr}}
\newcommand{\id}{\operatorname{id}}
\begin{document}

\newtheorem{thm}{Theorem}[section]
\newtheorem{conj}[thm]{Conjecture}
\newtheorem{lem}[thm]{Lemma}
\newtheorem{cor}[thm]{Corollary}
\newtheorem{prop}[thm]{Proposition}
\newtheorem{rem}[thm]{Remark}

\theoremstyle{definition}
\newtheorem{defn}[thm]{Definition}
\newtheorem{examp}[thm]{Example}
\newtheorem{notation}[thm]{Notation}
\newtheorem{rmk}[thm]{Remark}

\theoremstyle{remark}

%%%%%%%%%%%%%%%%%%%%%%%%%%%%%%%%%%%%%%%%%%%%%%%%%%%%%%%%%%%%%%%%%%%
\makeatletter
\renewcommand{\maketag@@@}[1]{\hbox{\m@th\normalsize\normalfont#1}}%
\makeatother
%This allows me to smallify diagrams without shrinking the equation number.
%%%%%%%%%%%%%%%%%%%%%%%%%%%%%%%%%%%%%%%%%%%%%%%%%%%%%%%%%%%%%%%%%%%
%\renewcommand{\labelenumi}{(\roman{enumi})}
%\renewcommand{\theenumi}{\roman{enumi}}

\renewcommand{\labelenumi}{(\roman{enumi})}
\renewcommand{\labelenumii}{(\alph{enumii})}

\renewcommand{\theenumi}{(\roman{enumi})}
\renewcommand{\theenumii}{(\alph{enumii})}

\def\square{\hfill ${\vcenter{\vbox{\hrule height.4pt \hbox{\vrule width.4pt
height7pt \kern7pt \vrule width.4pt} \hrule height.4pt}}}$}

\newenvironment{pf}{{\it Proof:}\quad}{\square \vskip 12pt}
\date{\today}

\title[Double Adjunctions and Free Monads]{Double Adjunctions and Free Monads}
\author[T. M. Fiore]{Thomas M. Fiore}
\address{Thomas M. Fiore, Department of Mathematics and Statistics, University of
Michigan-Dearborn, 4901 Evergreen Road, Dearborn, MI 48128, USA}
\email{tmfiore@umd.umich.edu}

\author[N. Gambino]{Nicola Gambino}
\address{Nicola Gambino, Dipartimento di Matematica e Informatica, Universit\`a degli Studi di
Palermo, via Archirafi 34, 90123 Palermo, Italy}
\email{ngambino@math.unipa.it}

\author[J. Kock]{Joachim Kock}
\address{Joachim Kock, Departament de Matem\`atiques,
Universitat Aut\`onoma de Barcelona, 08193 Bellaterra (Barcelona),
Spain} \email{kock@mat.uab.cat}

%\thanks{}

\keywords{}

% Subject matter: category theory
% MSC: primary 18D05; secondary 18C15, 18C20
% Key words: double categories, adjunctions, monads

\begin{abstract}
{\bf R\'esum\'e.}
   Nous caract\'erisons les adjonctions doubles en termes de
   pr\'efaisceaux et carr\'es universels, puis appliquons ces
   caract\'erisations aux monades libres et aux objets
   d'Eilenberg--Moore dans les cat\'egories doubles.  Nous am\'eliorons
   notre resultat paru dans \cite{FioreGambinoKock:DoubleMonadsI} comme suit~:
   si une cat\'egorie double munie d'un co-pliage admet la construction
   des monades libres dans sa 2-cat\'egorie horizontale, alors elle
   admet aussi la construction des monades libres en tant que
   cat\'egorie double.  Nous y d\'emontrons aussi qu'une cat\'egorie
   double admet les objets d'Eilenberg--Moore si et seulement si un
   certain pr\'efaisceau param\'etris\'e est repr\'esentable.  Pour ce faire,
   nous d\'eveloppons une notion de pr\'efaisceaux
   param\'etris\'es sur les cat\'egories doubles et d\'emontrons un
   lemme de Yoneda pour icelles.
\\
{\bf Abstract.}  We characterize double adjunctions in terms of presheaves and
universal squares, and then apply these characterizations to free monads and
Eilenberg--Moore objects in double categories. We improve upon our
earlier result in \cite{FioreGambinoKock:DoubleMonadsI} to conclude:
    if a double category with cofolding admits the construction of
    free monads in its horizontal 2-category, then it also admits
    the construction of free monads as a double category.
We also prove that a double
category admits Eilenberg--Moore objects if and only if a certain
parameterized presheaf is representable. Along the way, we develop
parameterized presheaves on double categories and prove a double-categorical
Yoneda Lemma.
\end{abstract}

%    R\'esum\'e.
%
%    Nous caract\'erisons les adjonctions doubles en termes de
%    pr\'efaisceaux et carr\'es universels, puis appliquons ces
%    caract\'erisations aux monades libres et aux objets
%    d'Eilenberg--Moore dans les cat\'egories doubles.  Nous am\'eliorons
%    notre resultat de \cite{FioreGambinoKock:DoubleMonadsI} comme suit~:
%    si une cat\'egorie double munie d'un co-pliage admet la construction
%    des monades libres dans sa 2-cat\'egorie horizontale, alors elle
%    admet aussi la construction des monades libres en tant que
%    cat\'egorie double.  Il est demontr\'e aussi qu'une cat\'egorie
%    double admet les objets d'Eilenberg--Moore si et seulement si un
%    certain pr\'efaisceau param\'etris\'e est repr\'esentable.  En
%    passant, nous d\'eveloppons la notion de pr\'efaisceaux
%    param\'etris\'es sur les cat\'egories doubles, et d\'emontrons un
%    lemme de Yoneda pour icelles.

\maketitle

\tableofcontents

%\chapter[Chapter 1 Heading]{Chapter 1}

%\layout is the command to view the layout of a page
%\begin{abstract}
%\end{abstract}

\section{Introduction}

The notion of double category was introduced by Ehresmann~\cite{Ehresmann:CR-1963-01-28}
in 1963, as an instance of the concept of
internal category from \cite{ehresmann}, and was developed in the context of a
general theory of structure, as synthesized in his book {\em Cat\'egories et
structures} \cite{ehresmann2} (published in 1965), which in many regards was
ahead of its time.
Meanwhile, B\'enabou in his thesis work (under Ehresmann's
supervision) emphasized the simpler notion
of $2$-category, discovered that
$\mathbf{Cat}$ itself is an example, and derived the notion from that of
enrichment (Cat\'egories relatives)~\cite{Benabou:CR-1965-04-05}.
$2$-categories rather than double categories became the standard setting for
$2$-dimensional structures in category theory, not only because of a more
generous supply of examples, but also because $2$-categories behave and feel a
lot more like $1$-categories, whereas double categories present certain strange
phenomena.  For example not every compatible arrangement of squares in a double
category is composable, see~Dawson--Par\'e~\cite{dawsonpare1993}.  The past decade, however,
with the proliferation of higher-categorical viewpoints and methods, has seen a
certain renaissance of double categories, and double-categorical structures are
being discovered and studied more and more frequently in many different areas,
while also traditional $2$-categorical situations are being revisited in the new
light of double categories.

We became interested in double categories through work in conformal field
theory, topological quantum field theory, operad theory, and categorical logic.
In all these cases, the double-categorical structures come about in situations
where there are two natural kinds of morphisms, typically some complicated
morphisms (like spans of sets or bimodules) and some more elementary ones (like
functions between sets or ring homomorphisms), and the double-categorical
aspects concern the interplay between such different kinds of morphisms.  While
it often provides great conceptual insight to have everything encompassed in a
double category, one is often confronted with the lack of machinery for dealing
with double categories, and a need is being felt for a more systematic theory of
double categories.

This paper can be seen as a small step in that direction: although our work is
motivated by some concrete questions about monads, we develop further
the basics of adjunctions between double categories: we introduce
parametrized presheaves, prove a double Yoneda Lemma, characterize adjunctions
in several ways, and go on to study double categories with further structure ---
foldings or cofoldings --- for which we study the question of existence of free
monads and Eilenberg--Moore objects.  This was our original motivation, and in
that sense the present paper is a sequel to our previous paper
\cite{FioreGambinoKock:DoubleMonadsI} about {\em monads in double categories},
although logically it is rather a precursor: with the theory we develop here,
some of the results from \cite{FioreGambinoKock:DoubleMonadsI} can be
strengthened and simplified at the same time.

The notion of adjunction we consider is that of internal adjunction in $\mathbf{Cat}$.
There are two such notions: horizontal and vertical, depending on the
interpretation of double categories as internal categories.  A more general
notion of vertical double adjunction was studied by Grandis and
Par\'e~\cite{Grandis-Pare:adjoints}; we comment on the relationship in
Section~\ref{sec:Double_Adjunctions}.  Although horizontal and vertical
adjunctions are abstractly equivalent notions, under transposition of double
categories, often the double categories have extra structure which breaks the
symmetry and makes the two notions different.  In this paper we need both
notions.

In some regards, double adjunctions express universality in the ways one expects
based on experience with 1-categories, as we prove in
Theorem~\ref{thm:double_adjunction_descriptions}: a horizontal double adjunction may be
given by double functors $F$ and $G$ with horizontal natural transformations
$\eta$ and $\varepsilon$ satisfying the two triangle identities, or by double
functors $F$ and $G$ with a universal horizontal natural transformation ($\eta$
or $\varepsilon$), or by a single double functor $F$ or $G$ equipped with
appropriate universal squares compatible with vertical composition, or by a
bijection between sets of squares compatible with vertical composition.

This article primarily deals with {\it strict} double categories and strict double adjunctions,
and the unmodified term ``double category'' always means ``{\it strict} double category''. However, we do develop a result about horizontal adjunctions between normal, vertically weak double categories in Theorem~\ref{thm:double_adjunction_def_iff_v_pseudo}. Its transpose applies to the free--forgetful adjunction between endomorphisms and monads in the normal, horizontally weak double category $\Sp$ of horizontal spans, see the final paragraphs of Section~\ref{sec:Notation} for more on ``pseudo'' versus ``strict'' and the example in Section~\ref{sec:DoubleAdjunctionBetweenEndosInSpanAndMonadsInSpan}.

Although double adjunctions express universality in some of the ways one expects,
the characterizations of adjointness in 1-category theory in terms of
representability do not carry over to double category theory in a
straightforward way, and instead require a new notion of {\it presheaf on a
double category}.  Namely, to prove that an ordinary 1-functor $F\colon \bfA
\rightarrow \bfX$ admits a right adjoint, it is sufficient to show that the
presheaf $\bfA(F-,A)$ is representable for each object $A$ separately.  But to
establish that a {\it double} functor $F$ admits a horizontal right {\it double} adjoint,
two new requirements arise: first, we must consider how the analogous presheaves
{\it vertically combine}, and second, we must consider the representability of
all the analogous presheaves {\it simultaneously rather than separately}.  The
first requirement forces presheaves on double categories to be {\it vertically
lax} and to {\it take values in the normal, vertically weak double category} $\Sp^t$ {\it of vertical
spans}, as opposed to the 1-category $\mathbf{Set}$.  We prove a Yoneda Lemma
for such $\Sp^t$-valued presheaves in
Proposition~\ref{prop:Double_Yoneda_Lemma}.  The second requirement leads us to
consider {\it parameterized} presheaves on double categories.  With these
notions we establish the double-categorical analogue of the representability
characterization of adjunctions in
Theorem~\ref{thm:representability_characterization_of_dbladjs}, namely a double
functor admits a horizonal right adjoint if and only if a certain parameterized
$\Sp^t$-valued presheaf is representable.  Parameterized presheaves also play a
role in the proof of Theorem~\ref{thm:double_adjunction_descriptions}.

Yoneda theory for double categories has been studied also in a recent paper by Par\'e \cite{Pare:DoubleYoneda}.
He independently obtains our Examples~\ref{exa:homsets_are_a_parametrized_presheaf} and \ref{exa:representables} (his Section 2.1), Proposition~\ref{prop:Double_Yoneda_Lemma} on the Double Yoneda Lemma (his Theorem 2.3), and Theorem~\ref{thm:double_adjunction_descriptions}~\ref{laxapproach} (his Theorem 2.8).

Many double categories of interest have additional structure that
  allows one to reduce certain questions about the double category to
  questions about the horizontal $2$-category.  Two such structures
  are {\it folding} and {\it cofolding}, recalled in Definitions~\ref{def:folding}
  and \ref{def:cofolding}.  Double categories with both folding and
  cofolding are essentially the same as {\it framed bicategories} in
  the sense of Shulman~\cite{Shulman:Framed}.  In this article we work with
foldings and cofoldings separately because some examples, including our
motivating examples, admit one or the other but not both.

As an example of the principle of reduction to the horizontal 2-category in the
presence of a folding or cofolding,
Proposition~\ref{prop:folding_double_adjunction_iff_horizontal_double_adjunction}
states that two double functors $F$ and $G$ compatible with foldings (or
cofoldings) are horizontal double adjoints if and only if their underlying
horizontal 2-functors are 2-adjoints.
% If the foldings (or cofoldings) send
% vertical morphisms to horizontal morphisms fully faithfully, then a double
% functor $F$ compatible with the foldings (or cofoldings) admits a horizontal
% right double adjoint if and only if $F$ admits a right adjoint in every other
% reasonable sense, see
% Corollary~\ref{cor:fully_faithful_holonomy_implies_all_adjunctions_equivalent},
% and its cofolding counterpart
% Corollary~\ref{cor:fully_faithful_coholonomy_implies_all_adjunctions_equivalent}.

It is a much more subtle question to deduce a {\em vertical} double adjunction
from a $2$-adjunction in the horizontal $2$-category.  We discuss the special
cases of quintet double categories in the second half of
Section~\ref{sec:Double_Adjunctions_with_Foldings_Cofoldings}.  Surprisingly
such a deduction
is possible in the case of our main result, Theorem~\ref{thm:existence_of_free_monads},
which concerns monads in double
categories and the free-monad adjunction, as we proceed to explain.  In
our earlier paper \cite{FioreGambinoKock:DoubleMonadsI} we showed how to
associate to a double category $\bbD$ a double category $\bEnd(\bbD)$ of
endomorphisms in $\bbD$ and a double category $\bMnd(\bbD)$ of monads in $\bbD$.
The double categories $\bEnd(\bbD)$ and $\bMnd(\bbD)$ are extensions of Street's
2-categories of endomorphisms and monads in \cite{Street:formal-monads} in the
sense that if $\mathbf{K}$ is a 2-category and $\bbH(\mathbf{K})$ is
$\mathbf{K}$ viewed as a vertically trivial double category, then the horizontal
2-categories of $\bEnd(\bbH(\mathbf{K}))$ and $\bMnd(\bbH(\mathbf{K}))$ are
Street's 2-categories $\End(\mathbf{K})$ and $\Mnd(\mathbf{K})$.  In
\cite[Theorem~3.7]{FioreGambinoKock:DoubleMonadsI} we established a fairly
technical criterion which allows one to conclude the existence of free monads in
a double-categorical sense from the existence of free monads in the underlying
horizontal 2-category.  The basic assumptions were that the double category is a
framed bicategory and the appropriate substructures admit 1-categorical
equalizers and coproducts.  In the present paper we clarify and generalize this,
using the theory of double adjunctions and cofoldings.

A double category $\bbD$ is said to {\it admit the construction of free monads}
if the forgetful double functor $\bMnd(\bbD) \rightarrow \bEnd(\bbD)$ admits a {\it vertical}
left double adjoint such that the underlying vertical morphism of each unit
component is the identity.  This is somewhat more stringent than our earlier
definition in \cite{FioreGambinoKock:DoubleMonadsI}, where we required only a
vertical left double adjoint.  Our main application,
Theorem~\ref{thm:existence_of_free_monads}, states
that a double category with cofolding admits the construction of free monads if
its horizontal 2-category admits the construction of free monads.  This improves
\cite[Theorem~3.7]{FioreGambinoKock:DoubleMonadsI}, since it removes most of the
technical hypotheses and also strengthens the conclusion.  A main step is
Proposition~\ref{prop:cofolding_on_D_induces_cofolding_on_Mnd(D)_and_on_End(D)},
which states that a cofolding on a double category $\bbD$ induces cofoldings on
$\bEnd(\bbD)$ and $\bMnd(\bbD)$.  The corresponding statement for foldings does
not seem to be true.

To illustrate the theory, we consider in detail the example of the normal, horizontally weak double
category $\Sp$ of horizontal spans.  In $\Sp$, the endomorphisms are directed graphs and monads are categories.  The double-categorical free--forgetful
adjunction between $\bEnd(\Sp)$ and $\bMnd(\Sp)$ extends the classical
construction of the free category on a graph.

% An example is the free--forgetful adjunction between $\bEnd(\Sp)$ and
% $\bMnd(\Sp)$, where $\Sp$ is the double category of horizontal spans.
% Endomorphisms in $\Sp$ are directed graphs and monads in $\Sp$ are categories.
% This example is worked out in detail in
% Section~\ref{sec:DoubleAdjunctionBetweenEndosInSpanAndMonadsInSpan}.

Returning to general double categories without cofolding, we now describe our
second main application.  Theorem~\ref{thm:EilenbergMooreExistence} states that
a double category $\bbD$ admits Eilenberg--Moore objects if and only if the
parameterized presheaf is representable which assigns to a monad $(X,S)$ and an
object $I$ in $\bbD$ the set $S$-$\text{Alg}_I$ of $S$-algebra structures on $I$.
The proof is quite short, since most of the work was done in the earlier
sections.

\smallskip

\noindent {\bf Outline of the paper.} Section~\ref{sec:Notation} presents our
notational conventions.  In Section~\ref{sec:presheaves} we introduce
parameterized presheaves on double categories and their representability, and
prove the Double Yoneda Lemma.  In Sections~\ref{sec:Universal_Squares} and
\ref{sec:Double_Adjunctions} we introduce universal
squares, and prove the various characterizations of horizontal double
adjunctions.
Section~\ref{sec:Double_Adjunctions_with_Foldings_Cofoldings} is concerned
with the case of horizontal double adjunctions compatible with foldings and cofoldings.
In Section~\ref{sec:EndosAndMonadsInADoubleCategory} we prove that $\bEnd(\bbD)$
and $\bMnd(\bbD)$ admit cofoldings when $\bbD$ does.
Section~\ref{sec:DoubleAdjunctionBetweenEndosInSpanAndMonadsInSpan} works out
the vertical double adjunction between $\bEnd(\Sp)$ and $\bMnd(\Sp)$ explicitly.
Sections~\ref{sec:Construction_of_Free_Monads} and
\ref{sec:Characterization_of_Existence_of_Eilenberg-Moore_Objects} are
applications of the results on double adjunctions to the construction of free
monads in double categories with cofolding and to a characterization of the
existence of Eilenberg--Moore objects in a general double category.

\section{Notational Conventions}\label{sec:Notation}

We begin by fixing some notation concerning double categories.

A {\it double category} is a categorical structure consisting of objects,
horizontal morphisms, vertical morphisms, squares, the relevant domain and
codomain functions, compositions, and units, subject to a few axioms~\cite
{Ehresmann:CR-1963-01-28}.  Succinctly, a double category is an internal
category in $\mathbf{Cat}$ \cite{ehresmann}, and in particular involves
a diagram of categories
and functors
$$
\bbD_1 \times_{\bbD_0} \bbD_1  \stackrel m \longrightarrow \bbD_1
\stackrel u\longleftarrow \bbD_0 .
$$
Here $\bbD_0$ is the category of objects and vertical arrows of $\bbD$,
and $\bbD_1$ is the category of horizontal arrows and squares, and
$m$ and $u$ express horizontal composition and identity cells.

The notion was introduced by
C.~Ehresmann in the mid sixties and investigated by A.~Ehresmann and C.~Ehresmann
in the 60's and 70's; among those pioneering works on the subject, the
most relevant for the present paper are \cite{Ehresmann:CR-1963-01-28,
ehresmann, EhresmannQuintets, ehresmann2}.  We refer to
Bastiani--Ehresmann~\cite{ehresmannone}, Brown--Mosa~\cite{BrownMosa},
Fiore--Paoli--Pronk~\cite{FiorePaoliPronk}, and Grandis--Par\'e~\cite{Grandis-Pare:limits}
for more modern treatments, each
starting with a short introduction to double categories.  The homotopy theory of
double categories has been investigated by
Fiore--Paoli~\cite{FiorePaoli} and Fiore--Paoli--Pronk~\cite{FiorePaoliPronk}.

We indicate double categories with blackboard letters, such as $\bbC$, $\bbD$,
and $\bbE$, and denote horizontal respectively vertical composition of squares
by
\begin{equation} \label{bracket_notation_for_square_compositions}
\left[ \alpha \; \; \beta \right] \;\; \text{ and } \;\;
\begin{array}{c}\begin{bmatrix} \alpha \\ \gamma \end{bmatrix}\end{array},
\end{equation}
when they are defined. The double category axiom called {\it interchange law} then states the equality
\begin{equation} \label{equ:interchange_law}
\begin{bmatrix}
\begin{bmatrix}
\alpha & \beta
\end{bmatrix} \vspace{1mm} \\
\begin{bmatrix}
\gamma & \delta
\end{bmatrix}
\end{bmatrix}=\begin{bmatrix}
\begin{bmatrix} \alpha \\ \gamma \end{bmatrix} &
\begin{bmatrix} \beta \\ \delta \end{bmatrix}
\end{bmatrix}.
\end{equation}
We simply denote this composite by
\begin{equation} \label{equ:interchange2}
\begin{bmatrix} \alpha & \beta \\ \gamma & \delta
\end{bmatrix}.
\end{equation}
The notation in \eqref{bracket_notation_for_square_compositions} similarly applies to horizontal and vertical morphisms, for instance, $\left[f \;\; g \right]$ and $\vcomp{j}{k}$ denote the composites $g \circ f$ and $k \circ j$ in the usual orthography. The horizontal and vertical identity morphisms on an object $C$ in $\bbC$ are denoted
$1^h_C$ and $1^v_C$ respectively. The horizontal identity square for a vertical morphism $j$ is denoted by $i^h_j$, while the vertical identity square for a horizontal morphism $f$ is indicated with $i^h_f$.

If $\bbD$ is a double category, then $\Hor \bbD, \Ver \bbD,$ and $\Sq \bbD$,
signify the collections of horizontal morphisms, vertical morphisms, and squares
in $\bbD$.  To specify the set of horizontal respectively vertical morphisms
from an object $D_1$ to an object $D_2$, we write $\Hor \bbD(D_1,D_2)$ and $\Ver
\bbD(D_1,D_2)$.  Similarly, the notation $\Hor \bbD(f,g)$ indicates the function
$\Hor\bbD(D_1,D_2)\rightarrow \Hor\bbD(D_1',D_2')$ obtained by pre- and
postcomposition with the horizontal morphisms $f$ and $g$.  The function $\Ver
\bbD(j,k)$ is defined analogously.  To indicate the collection of squares with
fixed left vertical boundary $j$ and fixed right vertical boundary $k$, we write
\begin{equation} \label{hom_notation}
\bbD(j,k)=\left\{ \alpha \in \Sq \bbD \; \bigg| \; \alpha \text{ has
the
form } \begin{array}{c} \xymatrix{\ar[r] \ar[d]_j \ar@{}[dr]|{\alpha} & \ar[d]^k \\
\ar[r] &  } \end{array} \right\}.
\end{equation}
For example, for the vertical identities $1^v_{D_1}$ and $1^v_{D_2}$, the set
$\bbD(1^v_{D_1},1^v_{D_2})$ consists of the 2-cells between morphisms $D_1
\rightarrow D_2$ in the horizontal 2-category of $\bbD$.  In general, the
squares in $\bbD(j,k)$ may not compose vertically.  Also in analogy to the
hom-notation, the notation $\bbD(\alpha,\beta)$ means horizontal pre- and
postcomposition by squares $\alpha$ and $\beta$.

For any double category
$\bbD$, the {\it horizontal opposite} $\bbD^{\text{horop}}$ is formed by
switching horizontal domain and codomain for both horizontal morphisms and
squares in $\bbD$.  More precisely, the horizontal 1-category of
$\bbD^{\text{horop}}$ is equal to the opposite of the horizontal 1-category of
$\bbD$, the vertical 1-category of $\bbD^{\text{horop}}$ is the same as that of
$\bbD$, and the category $(\Ver \bbD^{\text{horop}}, \Sq \bbD^{\text{horop}})$
is equal to the opposite category of $(\Ver \bbD, \Sq \bbD)$.

The {\em transpose}
of a double category is obtained by switching the vertical and horizontal
directions.  The symmetric nature of the notion of double category means that
each double category has two different interpretations as an internal category;
these two interpretations are interchanged by transposition.
 We shall always stick with the ``horizontal'' interpretation outlined
initially.

{\em Double functors} are just internal functors, and the same notion results
from the two possible interpretations of double categories as internal
categories.  We shall also need {\em vertically lax double functors}: these
strictly preserve horizontal composition, but provide non-invertible
comparison $2$-cells for composition of vertical arrows.  We refer to
Grandis--Par\'e~\cite{Grandis-Pare:adjoints} for the details.  A {\em horizontal
natural transformation} is an internal natural transformation in $\mathbf{Cat}$
(for our preferred internal interpretation). In particular, a horizontal natural transformation $\theta\colon F \Rightarrow G$ for $F,G\colon \mathbb{D} \to \mathbb{E}$ assigns to each object $A$ of $\mathbb{D}$ a horizontal
morphism $\theta A \colon FA \to GA$, and assigns to each vertical morphism $j$ in $\bbD$ a square $\theta j$ bounded on the left and right by $Fj$ and $Gj$ respectively, such that
\begin{equation} \label{equ:horizontal_natural_transformation}
\theta 1^v_A=i^h_{1^v_A}\hspace{.5in}\theta \vcomp{j_1}{j_2}=\vcomp{\theta j_1}{\theta j_2} \hspace{.5in} \left[ F\alpha \;\; \theta k \right]=\left[\theta j \;\; G\alpha \right]
\end{equation}
for all objects $A$ of $\bbD$, composable vertical morphisms $j_1$ and $j_2$ of $\bbD$, and squares $\alpha$ in $\bbD(j,k)$.
A {\em vertical natural transformation} can be defined as an internal natural transformation for the
transposed internal interpretation, which is the same as the transpose of the horizontal notion above, but can also be described succinctly as
follows: a vertical natural transformation $\theta$ between two double functors
$F,G \co \bbA \to \bbX$ consists of two natural transformations $\theta_0 \co
F_0 \Rightarrow G_0$ and $\theta_1 \co F_1 \Rightarrow G_1$ compatible with
horizontal composition and identity cells.

Double categories, double functors and horizontal natural transformations
form a $2$-category $\mathbf{DblCat_h}$, and there is a canonical $2$-functor
$\bfH \co \mathbf{DblCat_h} \to \mathbf{2Cat}$ which to a double category
associates its horizontal $2$-category, i.e.~consisting of objects, horizontal
arrows and squares whose vertical sides are identities.  Similarly there is
a $2$-category $\mathbf{DblCat_v}$ of double categories, double functors, and
vertical natural transformations, and a canonical $2$-functor $\bfV\co
\mathbf{DblCat_v} \to \mathbf{2Cat}$ defined similarly as $\bfH$.

The double category $\bbV_1\bbD$ has vertical 1-category the
vertical 1-category of $\bbD$ and everything else trivial, that is, there are no
non-trivial squares and no non-trivial horizontal morphisms in $\bbV_1\bbD$.
The subscript 1 in $\mathbb{V}_1 \bbD$ reminds us that we retain only the
vertical 1-category part of $\bbD$, and also distinguishes $\bbV_1 \bbD$ for a
double category $\bbD$ from $\bbV\bfK$ for a 2-category $\bfK$, which we define
momentarily.

A 2-category $\bfK$ gives rise to various double categories.
The double category $\bbH\bfK$ has $\bfK$ as its horizontal
2-category and only trivial vertical morphisms.  Similarly, the double category
$\bbV\bfK$ has $\bfK$ as its vertical 2-category and only trivial horizontal
morphisms.  Double categories of quintets of a $2$-category will be introduced
in Examples~\ref{exa:quintets} and \ref{exa:twistedquintets}.

In this paper, the term ``double category'' always means ``{\it strict} double category.'' We predominantly
work with strict double categories, except for a few specified passages: in Section~\ref{sec:presheaves} the normal, vertically weak double category $\Sp^t$ is the codomain of presheaves on double categories, Theorem~\ref{thm:double_adjunction_def_iff_v_pseudo} concerns double adjunctions of strict double functors between horizontally weak double categories, and Section~\ref{sec:DoubleAdjunctionBetweenEndosInSpanAndMonadsInSpan} treats the main example of the free--forgetful double adjunction between the normal, vertically weak double categories $\bEnd(\Sp)$ and $\bMnd(\Sp)$.

To explain the meaning of this terminology, recall that a {\it pseudo double
category} is like a double category, except one of the two morphism
compositions (vertical or horizontal) is associative and unital up to coherent
invertible squares, rather than strictly,
cf.~Grandis--Par\'e~\cite{Grandis-Pare:limits}, see also Chamaillard~\cite{Chamaillard},
Fiore~\cite{Fiore:PseudoAlgebrasPseudoDoubleCategories}, Martins-Ferreira~\cite{Martins-Ferreira}. In this article we specify the weak direction in a given pseudo category by our usage of the terms {\it horizontally weak double category} and {\it vertically weak double category}. In either case, the interchange law in \eqref{equ:interchange_law} holds strictly.

All of the pseudo double categories we work with will also be {\it normal}, that is, the coherent unit squares are actually identity squares, so that the identity morphisms in the weak direction are strict identities. As mentioned in \cite[page 172]{Grandis-Pare:limits}, this is easily arranged for pseudo double categories in which the weakly associative composition is given by some kind of choice (e.g. choice of pullbacks in the case of $\Sp$ in Example~\ref{exa:span}).

Normality has useful consequences. For each vertical morphism $j$, the square $i^h_j$ is an identity for the horizontal composition of squares (in a general pseudo category, $i^h_j$ is merely a distinguished square compatible with vertical composition). This small detail is needed in the proof of Theorem~\ref{thm:double_adjunction_def_iff_v_pseudo}. Another consequence of normality is that $\bfV\bbD$ is a strict 2-category when $\bbD$ is a normal, horizontally weak double category. If $\bbD$ is horizontally weak and not normal, then $\bfV\bbD$ is neither a bicategory nor a 2-category (however the vertical composition of 2-cells in $\bfV\bbD$ can be redefined to make a 2-category). See pages 44-46 of \cite{Fiore:PseudoAlgebrasPseudoDoubleCategories}, especially Remark~6.2, for a discussion of these topics.

Note also that (strict) horizontal natural transformations make sense between double functors of normal, vertically weak double categories (see the requirements in \eqref{equ:horizontal_natural_transformation}).

\begin{examp}
%   [Spans]
  \label{exa:span}
The normal, horizontally weak double category $\Sp$ will play a special role in this paper.  Its
objects are sets, its horizontal morphisms are spans of sets, its vertical morphisms are functions, and its squares are morphisms of spans. The horizontal composition of morphisms is by pullback combined with function composition: for the composite of two nontrivial horizontal morphisms, we choose the usual model for a set-theoretic pullback, which is a subset of the Cartesian product, and then compose the projections with remaining maps in the spans. However, for the composite of a horizontal morphism $B \leftarrow A \rightarrow C$ with an identity, we {\it choose the pullback} to be simply $A$. This choice of pullback makes the horizontally weak double category $\Sp$ {\it normal}, that is, the horizontal identities are actually strict horizontal identities. Consequently, for any two vertical morphisms $j$ and $k$ in $\Sp$, the horizontal identity squares $i^h_j$ and $i^h_k$ actually satisfy $\left[i^h_j \; \; \alpha \right]=\alpha=\left[\alpha \; \; i^h_k \right]$.

The normal, vertically weak double category $\Sp^t$ is the transpose of $\Sp$. Note that $\Sp$ is horizontally weak while $\Sp^t$ is vertically weak.
\end{examp}

\section{Parameterized Presheaves and the Double Yoneda Lemma}
\label{sec:presheaves}

In this section we introduce and study parametrized
presheaves, and prove a Yoneda Lemma for double categories.  The Double
Yoneda Lemma in Proposition~\ref{prop:Double_Yoneda_Lemma} and the
characterization of horizontal left double adjoints in
Theorem~\ref{thm:representability_characterization_of_dbladjs} require
parameterized $\Sp^t$-valued presheaves, as explained in the Introduction.  The
covariant Double Yoneda Lemma for presheaves on a double category $\bbD$ says
that morphisms from the represented presheaf $\bbD(R,-)$ to a presheaf $K$ on
$\bbD^{\text{horop}}$ are in bijective correspondence with the set $K(R)$.

A presheaf on a double category assigns to objects sets, to horizontal morphisms
functions, to vertical morphisms spans of sets, and to squares morphisms of
spans.  Moreover, these image spans are equipped with a kind of composition
provided by the vertical laxness of the presheaf, see for example equation
\eqref{equ:kind_of_composition}.

\begin{defn} \label{def:presheaf}
  Let $\bbD$ be a double category.
  \begin{enumerate}
    \item
A {\it presheaf} on $\bbD$ is a
% presheaf
% on $\bbD$ parametrized by the terminal double category, that is,
% a {\it presheaf on $\bbD$} is a
vertically lax double functor $\bbD^\text{horop} \rightarrow \Sp^t$.

\item
A {\it morphism of presheaves} is a horizontal natural
transformation of vertically lax double functors
$\bbD^\text{horop} \rightarrow \Sp^t.$
\end{enumerate}
\end{defn}

\begin{defn} \label{def:parameterized_presheaf}
Let $\bbD$ and $\bbE$ be double categories.
\begin{enumerate}
  \item
A {\it presheaf on
$\bbD$ parameterized by $\bbE$} is a vertically lax double functor
$\bbD^\text{horop}\times \bbE \rightarrow \Sp^t$. We
synonymously use the term {\it presheaf on $\bbD$ indexed by
$\bbE$}.

\item
A {\em morphism of presheaves on $\bbD$ parameterized by $\bbE$} is just a horizontal
natural transformation between them.
% A {\em morphism} of such parametrized presheaves is just a horizontal
% natural transformation between them.
\end{enumerate}
\end{defn}

\begin{examp} \label{exa:homsets_are_a_parametrized_presheaf}
The most basic example is delivered by the hom-sets of a double category
$\bbD$. Namely, a presheaf on $\bbD$ indexed by $\bbD$ is defined on
objects and horizontal morphisms by
$$\xymatrix{\bbD(-,-)\co \bbD^\text{horop}\times \bbD \ar[r] & \Sp^t }$$
$$\xymatrix{(D_1,D_2) \ar@{|->}[r] & \Hor \bbD (D_1,D_2) }$$
$$\xymatrix{(f,g) \ar@{|->}[r] & \Hor \bbD (f,g) }.$$
On vertical morphisms $(j,k)$, it is the vertical span
$$\xymatrix{\Hor \bbD(s^v j,s^v k) \\ \bbD(j,k)  \ar[d]^{t^v} \ar[u]_{s^v} \\
\Hor \bbD(t^v j,t^v k),}$$
which we often denote simply by $\bbD(j,k)$. On squares
$(\alpha,\beta)$, the vertically lax double functor $\bbD(-,-)$ is
the morphism of vertical spans induced by
$\bbD(\alpha,\beta)(\gamma)=\left[\alpha \; \; \gamma \;\; \beta
\right]$ as well as $\Hor \bbD(s^v \alpha,s^v \beta)$ and $\Hor
\bbD(t^v \alpha,t^v \beta)$.

For the vertically lax double functor $\bbD(-,-)$, the composition
coherence square in $\Sp^t$
$$\begin{array}{c} \begin{bmatrix} \bbD(j,k) \\ \bbD(\ell,m)
\end{bmatrix}\end{array} \hspace{-2ex}
\xymatrix{ \ar[r] & \bbD(\vcomp{j}{\ell},\vcomp{k}{m})} $$ is simply
composition in $\bbD$. More precisely, on elements we have
\begin{equation} \label{equ:kind_of_composition}
\begin{array}{c} \xymatrix{ \ar[r] \ar[d]_j \ar@{}[dr]|{\xi_1} & \ar[d]^k \\
  \ar[d]_\ell \ar[r] \ar@{}[dr]|{\xi_2}
& \ar[d]^m \\ \ar[r] & } \end{array} \xymatrix{ \ar@{|->}[r] & }
\begin{array}{c} \xymatrix{ \ar[r] \ar[dd]_{\vcomp{j}{\ell}}
  \ar@{}[ddr]|{\vcomp{\xi_1}{\xi_2}}
& \ar[dd]^{\vcomp{k}{m}} \\ & \\  \ar[r] & } \end{array}.
\end{equation}
The unit coherence square in $\Sp^t$ of the vertically lax double functor
$\bbD(-,-)$ is simply the vertical identity square embedding
$$\xymatrix{1^v_{\bbD(D_1,D_2)} \ar[r]^{i^v} & \bbD(1^v_{D_1},1^v_{D_2})}$$
$$f \xymatrix{\ar@{|->}[r] & } \begin{array}{c} \xymatrix{D_1 \ar@{=}[d]
\ar[r]^f \ar@{}[dr]|{i^v_f} & D_2 \ar@{=}[d]
\\ D_1 \ar[r]_f & D_2}\end{array}.$$
The presheaf $\bbD(-,-)$ may also be considered as a presheaf on
$\bbD^{\text{horop}}$ indexed by $\bbD^{\text{horop}}$.
This completes the example $\bbD(-,-)$.
\end{examp}

\begin{examp} \label{exa:representables}
As a special case of Example~\ref{exa:homsets_are_a_parametrized_presheaf}, we
may fix the first variable to be an object $R$ in $\bbD$ and we obtain a
presheaf on $\bbD^\text{horop}$, namely
$$\xymatrix{\bbD(R,-)\co \bbD \ar[r] & \Sp^t}.$$
This presheaf is {\it represented} by the object $R$. We shall
discuss a notion of representability for parameterized presheaves in
Definition~\ref{defn:parametrized_presheaf_representability}, as
they will be a key ingredient in our characterizations of horizontal double
adjunctions in
Theorem~\ref{thm:double_adjunction_descriptions}~\ref{laxapproach}
and Theorem~\ref{thm:representability_characterization_of_dbladjs}.

We write out the features of
Example~\ref{exa:homsets_are_a_parametrized_presheaf} for this
special case, since we will need these represented presheaves in the
Double Yoneda Lemma. Like any double functor, this presheaf consists
of an object functor and a morphism functor
$$\xymatrix{\bbD (R,-)^\text{Obj}\co(\Obj \bbD_0, \Obj \bbD_1) \ar[r] &
(\text{Sets}, \text{Functions})}$$
$$\xymatrix{\bbD (R,-)^\text{Mor}\co(\Mor \bbD_0, \Mor \bbD_1) \ar[r] &
(\text{Spans}, \text{Morphisms of Spans})}.$$
The object functor is the usual represented presheaf on the
horizontal 1-category, namely
$$\bbD(R,D)^\text{Obj}:=\{f\co R \rightarrow D \; \vert \; f
\text{ horizontal morphism in } \bbD \}=\Hor \bbD(R,D)$$
$$\bbD(R,g)^\text{Obj}(f):=\left[f \; \; g \right].$$
The morphism functor, on the other hand, takes a vertical morphism
$j\co D \rightarrow D'$ in $\bbD$ to the (vertical) span
$\bbD(R,j)^\text{Mor}$ defined as
$$\xymatrix{\bbD(R,D)^\text{Obj} \\ \ar[u]_-{s^v} \bbD(1^v_R,j) \ar[d]^-{t^v} \\
\bbD(R,D')^\text{Obj},}$$
and on a square $\beta$ we have the morphism of spans
$\bbD(R,\beta)^\text{Mor}$ induced by
$\bbD(R,\beta)^\text{Mor}(\alpha)=\left[\alpha \; \; \beta \right].$

The composition coherence square in $\Sp^t$
$$\begin{array}{c} \begin{bmatrix} \bbD(R,j)^\text{Mor} \\ \bbD(R,k)^\text{Mor}
\end{bmatrix}\end{array} \hspace{-2ex}
\xymatrix{ \ar[r] & \bbD(R,\vcomp{j}{k})} $$ of the vertically lax
double functor $\bbD(R,-)$ is simply composition in $\bbD$. More
precisely, on elements we have
$$\begin{array}{c} \xymatrix{R \ar[r] \ar@{=}[d] \ar@{}[dr]|{\xi_1} & \ar[d]^j \\
R \ar@{=}[d] \ar[r] \ar@{}[dr]|{\xi_2}
& \ar[d]^k \\ R \ar[r] & } \end{array} \xymatrix{ \ar@{|->}[r] & }
\begin{array}{c} \xymatrix{R \ar[r] \ar@{=}[dd] \ar@{}[ddr]|{\vcomp{\xi_1}{\xi_2}}
& \ar[dd]^{\vcomp{j}{k}} \\ & \\ R \ar[r] & } \end{array}.$$

The unit coherence square in $\Sp^t$ of the vertically lax double
functor $\bbD(R,-)$ is simply the identity embedding
$$\xymatrix{1^v_{\bbD(R,D)^\text{Obj}} \ar[r]^{i^v} & \bbD(R,1^v_D)^\text{Mor}}$$
$$f \xymatrix{\ar@{|->}[r] & } \begin{array}{c} \xymatrix{R \ar@{=}[d] \ar[r]^f
\ar@{}[dr]|{i^v_f} & D \ar@{=}[d]
\\ R \ar[r]_f & D}\end{array}.$$
\end{examp}

\begin{examp}
If $\bfC$ is a 1-category, then a classical presheaf on $\bfC$ may
be considered a presheaf on $\bbH\bfC$ in the following way. A
classical presheaf on $\bfC$ is the same thing as a strictly unital
double functor $F\co (\bbH\bfC)^{\text{horop}} \rightarrow \Sp^t$
which has composition coherence morphism for $F(1_C^v) \circ
F(1_C^v) \rightarrow F(1_C^v)$ given by the projection of the
diagonal of $FC \times FC$ to $FC$. Any presheaf on $\bbH\bfC$
restricts to a classical presheaf on $\bfC$ by forgetting $F(1_C^v)$
for each $C$ and the composition and identity coherences.
\end{examp}

\begin{examp}
A presheaf on the (opposite of the) terminal double category 1 is
the same as a category, since a vertically lax double functor from 1
into $\Sp^t$ is the same as a (horizontal) monad in $\Sp$, which is
the same as a category. Note also that morphisms of such presheaves
are horizontal natural transformations of vertically lax double functors,
hence are the same as functors (see \cite{FioreGambinoKock:DoubleMonadsI}).
\end{examp}

\begin{examp}
Let $\bfC$ be a 1-category.  Then $\bfC(-,-)$ is a presheaf on $\bfC$ indexed by
$\Obj \bfC$.  This is a way to consider all the presheaves $\bfC(-,C)$
simultaneously.  Similarly, by parametrizing via the vertical 1-category of
$\bbD$, the indexed presheaf $\bbD(-,-)\co \bbD^\text{horop} \times \bbV_1\bbD
\rightarrow \Sp^t$ is a way of considering all presheaves $\bbD(-,R)$ {\it
simultaneously and how they combine vertically} (recall the notation
$\bbV_1\bbD$ from Section~\ref{sec:Notation}).  This point of view will become
important for our characterization of horizontal double adjunctions in
Theorems~\ref{thm:double_adjunction_descriptions}~and~\ref{thm:representability_characterization_of_dbladjs}.
\end{examp}

\begin{defn} \label{defn:parametrized_presheaf_representability}
A parameterized presheaf $F \co \bbD^{\text{horop}}\times \bbE \rightarrow
\Sp^t$ in the sense of Definition~\ref{def:parameterized_presheaf} is {\it
representable} if there exists a double functor $G\co \bbE \rightarrow \bbD$
such that $F$ is isomorphic to
$\bbD(-,G-)\co\bbD^\text{horop}\times \bbE \rightarrow \Sp^t$ as parameterized
presheaves.
\end{defn}
% \noindent In Definition~\ref{defn:parametrized_presheaf_representability},
% ``horizontally naturally isomorphic'' means horizontally naturally isomorphic as
% vertically lax double functors.

\begin{examp}
The presheaf $\bbD(-,R)\co \bbD^\text{horop} \rightarrow
\Sp^t$ is represented by the double functor $\ast
\rightarrow \bbD$ that is constant $R$. The indexed presheaf
$\bbD(-,-)\co \bbD^\text{horop} \times \bbV_1\bbD
\rightarrow \Sp^t$ is represented by the inclusion of the vertical
1-category of $\bbD$ into $\bbD$.
\end{examp}

We next prove the Double Yoneda Lemma. For simplicity, we do the
covariant version rather than the contravariant version.

\begin{prop}[Double Yoneda Lemma] \label{prop:Double_Yoneda_Lemma}
Let $\bbD$ be a small double category, $R$ an object of $\bbD$, $K\co \bbD
\rightarrow \Sp^t$ a vertically lax double functor, and $\text{\rm
HorNat}(\bbD(R,-),K)$ the set of horizontal natural transformations from
$\bbD(R,-)$ to $K$.  Then the map $$\xymatrix@R=.5pc{\theta_{R,K}\co\text{\rm
HorNat}(\bbD(R,-),K) \ar[r] & KR \\
\alpha \ar@{|->}[r] & \alpha_R(1_R^h)}$$ is a bijection. Further,
this bijection is a horizontal natural isomorphism of double
functors $N$ and $E$ $$\xymatrix{N, E\co \bbD \times \bbD\text{\rm
blCat}_{\text{\rm vert.lax}}(\bbD,\Sp^t) \ar[r] & \Sp^t }$$
$$N(R,K):=\text{\rm HorNat}(\bbD(R,-),K)$$
$$E(R,K):=K(R).$$
\end{prop}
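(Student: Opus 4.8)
The plan is to prove the bijection first, for fixed $R$ and $K$, and then upgrade it to a horizontal natural isomorphism of the two parametrized presheaves $N$ and $E$ by checking functoriality in both variables. For the bijection, I would write down an explicit inverse. Given an element $x \in KR$, define a candidate horizontal natural transformation $\psi(x)\co \bbD(R,-) \Rightarrow K$ as follows: for an object $D$ of $\bbD$ and a horizontal morphism $f\co R \to D$, set $\psi(x)_D(f) := K(f)(x)$, using that $K(f)$ is a function on the underlying sets; for a vertical morphism $j\co D \to D'$, the component $\psi(x)_j$ must be a morphism of vertical spans $\bbD(R,j)^{\mathrm{Mor}} \to K(j)$, and on an element $\alpha \in \bbD(1^v_R, j)$ one sets $\psi(x)_j(\alpha) := K(\alpha)(x)$, where $K(\alpha)$ is the induced morphism of spans and $x$ is regarded as lying in the relevant fiber via the unit coherence of $K$ at $R$. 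First I would verify that $\psi(x)$ genuinely satisfies the three axioms \eqref{equ:horizontal_natural_transformation} for a horizontal natural transformation — the unit condition $\theta 1^v_R = i^h_{1^v_R}$, vertical-composition compatibility $\theta\vcomp{j_1}{j_2} = \vcomp{\theta j_1}{\theta j_2}$, and the square condition — and that it respects the composition and unit coherence $2$-cells, which is precisely where the vertical laxness of $K$ and the description of the coherences of $\bbD(R,-)$ in Example~\ref{exa:representables} get used: the coherence square of $\bbD(R,-)$ is ordinary composition of squares in $\bbD$, so compatibility with $K$'s laxness reduces to naturality of $K$ on composites.

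Next I would check $\theta_{R,K} \circ \psi = \id_{KR}$, which is immediate since $\psi(x)_R(1^h_R) = K(1^h_R)(x) = x$ because $K$ is a (strictly unital, i.e.\ normal-target) double functor so $K(1^h_R) = \id$. The other direction, $\psi \circ \theta_{R,K} = \id$, is the usual Yoneda computation dressed up double-categorically: given $\alpha\co \bbD(R,-) \Rightarrow K$, I must show $\alpha = \psi(\alpha_R(1^h_R))$. On a horizontal morphism $f\co R \to D$ this follows from naturality of $\alpha$ applied to the identity square $i^v_f$ (or rather to $f$ as a morphism in $\Hor\bbD$), using $f = \bbD(R,f)^{\mathrm{Obj}}(1^h_R) = [1^h_R\;\; f]$. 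On a vertical morphism $j$ and an element $\alpha \in \bbD(1^v_R, j)$, one writes $\alpha = \bbD(R,\alpha')^{\mathrm{Mor}}(\text{something in }\bbD(1^v_R,1^v_R))$ for an appropriate square $\alpha'$ — concretely, $\alpha$ is the image of $i^v_{1^h_R}$ (or of $1^h_R$ via the unit coherence) under $\bbD(R,\alpha)^{\mathrm{Mor}}$, since postcomposing the unit square by $\alpha$ yields $\alpha$ itself — and then naturality of the morphism-functor component of $\alpha$ forces the value. This is the step I expect to be the main obstacle: getting the bookkeeping right for the vertical-morphism/square level, in particular correctly tracking how the unit coherence $i^v$ of $\bbD(R,-)$ embeds $1^h_R$ into $\bbD(R,1^v_R)^{\mathrm{Mor}}$ and checking that the horizontal naturality square of $\alpha$ at a square $\beta$ says exactly $\alpha_{j'}(\bbD(R,\beta)^{\mathrm{Mor}}(\gamma)) = K(\beta)(\alpha_j(\gamma))$, so that every element of $\bbD(R,j)^{\mathrm{Mor}}$ is pinned down by its image under $\alpha$ applied to the generator $1^h_R$.

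Finally I would promote this to the asserted natural isomorphism $N \cong E$ of parametrized presheaves $\bbD \times \bbD\text{blCat}_{\text{vert.lax}}(\bbD, \Sp^t) \to \Sp^t$. Here I treat $\bbD\text{blCat}_{\text{vert.lax}}(\bbD,\Sp^t)$ as a double category (with vertically lax double functors as objects, horizontal natural transformations as horizontal morphisms, vertical natural transformations as vertical morphisms, and modifications as squares) so that $N$ and $E$ are genuine vertically lax double functors. On the $E$ side, functoriality is built in: $E(R,K) = KR$ is evaluation, clearly functorial in $K$ and, in $R$, given by the action of $K$ on horizontal and vertical morphisms of $\bbD$ and on squares, with the span structure for a vertical morphism $j$ being $K(j)$ itself and the laxness coherence being that of $K$. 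On the $N$ side one checks that a horizontal morphism $(g, \sigma)\co (R,K) \to (R',K')$ — i.e.\ $g\co R \to R'$ in $\Hor\bbD$ and $\sigma\co K \Rightarrow K'$ — acts on $\alpha \in \mathrm{HorNat}(\bbD(R,-),K)$ by $\sigma \circ \alpha \circ \bbD(g,-)$, and that $\theta$ is compatible with this: $\theta_{R',K'}((\sigma\circ\alpha\circ\bbD(g,-))) = \sigma_{R'}(\alpha_{R'}(g)) = \sigma_{R'}(\alpha_{R'}([1^h_R\;\;g])) = \sigma_{R'}(K(g)(\alpha_R(1^h_R))) = E(g,\sigma)(\theta_{R,K}(\alpha))$, using horizontal naturality of $\alpha$. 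The analogous (and slightly more involved) checks on vertical morphisms of $\bbD$ and of the functor double category, on squares, and on the laxness coherence $2$-cells are routine diagram chases of the same flavour, combining the bijection already established with the axioms of horizontal and vertical natural transformations; since each is a verification rather than a construction, I would state them and leave the reader to expand.
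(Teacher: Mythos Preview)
Your proposal is correct and follows essentially the same route as the paper: construct the inverse $\tau$ (your $\psi$) by $\tau(a)_D(f) = K(f)(a)$ on objects and $\tau(a)_j(\xi) = K(\xi)(\delta^K_R(a))$ on vertical morphisms (exactly your ``$x$ regarded via the unit coherence of $K$ at $R$''), then verify the span-morphism squares commute and defer the remaining naturality checks to the classical Yoneda argument. The paper is terser than you --- it cites Borceux for most verifications and only spells out the commutativity of the square $\tau(a)_j$ --- so your expanded account of $\psi\circ\theta = \id$ and of naturality in $(R,K)$ is welcome elaboration rather than a different method.
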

\begin{proof}
This is an extension of the proof of Borceux~\cite[Theorem
1.3.3]{Borceux:HandbookI}.  We define $\theta_{R,K}(\alpha)=\alpha(1_R^h) \in
K(R)$ and for $a \in K(R)$ we define a horizontal natural transformation
$\tau(a)\co \bbD(R,-) \Rightarrow K$.  To each object $D\in \bbD$ we have the
horizontal morphism in $\Sp^t$
$$\xymatrix@R=.5pc{\tau(a)_D\co \bbD(R,D) \ar[r] & KD \\  f \ar@{|->}[r] & K(f)(a).}$$
and to each vertical morphism $j$ in $\bbD$ we have the square
$\tau(a)_j$ in $\Sp^t$
\begin{equation} \label{tausquare}
\begin{array}{c}
\begin{array}{c} \xymatrix@C=4pc{\bbD(R,D)^\text{Obj} \ar[r]^-{\tau(a)_D}  & K(D) \\
\bbD(1^v_R,j) \ar[r]|-{\lr{\tau(a)_j}} \ar[u] \ar[d] & \ar[u] \ar[d] K(j) \\
\bbD(R,D')^\text{Obj} \ar[r]_-{\tau(a)_{D'}}  &  K(D')}
\end{array} =
\begin{array}{c} \xymatrix@C=7pc{\bbD(R,D)^\text{Obj} \ar[r]^-{K(-)(a)}  & K(D) \\
\bbD(1^v_R,j) \ar[r]|-{\lr{K(-)(\delta^K_R(a))}} \ar[u] \ar[d] & \ar[u] \ar[d] K(j) \\
\bbD(R,D')^\text{Obj} \ar[r]_-{K(-)(a)}  &  K(D').}
\end{array}
\end{array}
\end{equation}
These squares commute, because for $\begin{array}{c} \xymatrix{R
\ar[r] \ar@{=}[d] \ar@{}[dr]|{\xi} & D \ar[d]^j \\ R \ar[r] & D'}
\end{array} \in \bbD(1^v_R, j)$ the squares
\begin{equation} \label{tausquarecommutes}
\begin{array}{c}
\xymatrix@C=7pc{K(R) \ar@{=}[d] \ar@{=}[r] & K(R) \ar[r]^{K(f)} & K(D) \\
K(R) \ar@{=}[d] \ar[r]|-{\lr{\delta^k_R}} & K(1^v_R) \ar[u] \ar[d]
\ar[r]|{\lr{K(\xi)}} & K(j) \ar[u] \ar[d] \\ K(R) \ar@{=}[r] & K(R)
\ar[r]_{K(f')} & K(D') }
\end{array}
\end{equation} commute. For example, the
top square in \eqref{tausquare} evaluated on $\xi$ is the same as the
top half of \eqref{tausquarecommutes} evaluated on $a$.

The naturality of $\tau(a)$, $\tau$, and $\theta$ is proved as in Borceux
\cite[Theorem 1.3.3]{Borceux:HandbookI}.
\end{proof}

\begin{cor}
For objects $R, S \in \bbD$, each horizontal natural transformation
$\bbD(R,-) \Rightarrow \bbD(S,-)$ has the form
$\bbD(h,-)$ for a unique horizontal arrow $h\co S \rightarrow  R$.
\end{cor}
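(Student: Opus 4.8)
The plan is to derive this from the Double Yoneda Lemma (Proposition~\ref{prop:Double_Yoneda_Lemma}) exactly as the analogous statement is obtained from the classical Yoneda lemma. First I would set $K := \bbD(S,-)\co \bbD \to \Sp^t$, which is a vertically lax double functor by Example~\ref{exa:representables}, and apply Proposition~\ref{prop:Double_Yoneda_Lemma} with this $K$ and the object $R$. Since $K(R) = \bbD(S,R)^{\text{Obj}} = \Hor\bbD(S,R)$, this produces a bijection
$$\theta_{R,\bbD(S,-)}\co \text{\rm HorNat}\bigl(\bbD(R,-),\bbD(S,-)\bigr) \longrightarrow \Hor\bbD(S,R), \qquad \alpha \longmapsto \alpha_R(1^h_R).$$

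Next I would unwind the inverse $\tau$ of $\theta_{R,\bbD(S,-)}$ constructed in the proof of Proposition~\ref{prop:Double_Yoneda_Lemma}. For $h \in \Hor\bbD(S,R) = K(R)$ it gives the horizontal natural transformation $\tau(h)\co \bbD(R,-) \Rightarrow \bbD(S,-)$ whose component at an object $D$ is $f \mapsto \bbD(S,f)^{\text{Obj}}(h) = \left[h \;\; f\right]$, and whose component at a vertical morphism $j$ is the square $\xi \mapsto \bbD(S,\xi)^{\text{Mor}}\bigl(i^v_h\bigr) = \left[i^v_h \;\; \xi\right]$ (here one uses the formulas of Example~\ref{exa:representables} for the unit coherence $i^v_h$ of $K$ and for the action of $\bbD(S,-)$ on squares). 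But this is precisely horizontal precomposition with $h$, i.e.\ the morphism of presheaves $\bbD(h,-)$. Conversely, $\theta_{R,\bbD(S,-)}\bigl(\bbD(h,-)\bigr) = \bbD(h,-)_R(1^h_R) = \left[h \;\; 1^h_R\right] = h$.

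Finally, since $\theta_{R,\bbD(S,-)}$ is a bijection, every horizontal natural transformation $\alpha\co \bbD(R,-) \Rightarrow \bbD(S,-)$ equals $\bbD(h,-)$ for the arrow $h := \alpha_R(1^h_R)\co S \to R$, and this $h$ is the unique arrow with that property: if $\bbD(h,-) = \bbD(h',-)$, then evaluating the $R$-components at $1^h_R$ gives $h = \left[h \;\; 1^h_R\right] = \left[h' \;\; 1^h_R\right] = h'$.

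There is essentially no obstacle here, since the content is entirely carried by Proposition~\ref{prop:Double_Yoneda_Lemma}; the only thing to check is the bookkeeping identification of $\tau(h)$ with the precomposition morphism $\bbD(h,-)$, after which the fact that $\bbD(h,-)$ is a bona fide morphism of presheaves in the sense of Definition~\ref{def:presheaf} is automatic. If one prefers, one can instead verify the conditions in~\eqref{equ:horizontal_natural_transformation} and compatibility with the coherence squares of Example~\ref{exa:homsets_are_a_parametrized_presheaf} directly by a routine computation with the interchange law~\eqref{equ:interchange_law} and associativity of horizontal composition, with no subtleties from weakness since $\bbD$ is strict.
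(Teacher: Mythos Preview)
Your argument is correct and is exactly the intended one: the paper states this corollary without proof immediately after the Double Yoneda Lemma, and your derivation via Proposition~\ref{prop:Double_Yoneda_Lemma} with $K=\bbD(S,-)$, together with the identification $\tau(h)=\bbD(h,-)$ from the explicit formulas in its proof and Example~\ref{exa:representables}, is precisely what is meant.
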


\begin{rmk}
If $k$ is a vertical morphism in $\bbD$, then
$$\xymatrix{\bbD(k,-)\co (\Ver \bbD, \Sq \bbD) \ar[r] & (\text{Sets},
\text{functions})}$$
$$\hspace{7ex}\xymatrix@C=4pc{ \ell \ar@{|->}[r] & \bbD(k,\ell)}$$
is an ordinary presheaf on $(\Ver \bbD, \Sq \bbD)^{\text{op}}$.
\end{rmk}

\section{Universal Squares in a Double Category} \label{sec:Universal_Squares}

The components of the unit or counit of any 1-adjunction are universal
arrows. Conversely, a 1-adjunction can be described in terms of such
universal arrows. In this section we introduce universal squares in
a double category, with a view towards the analogous
characterizations of horizontal double adjunctions in
Theorem~\ref{thm:double_adjunction_descriptions}.

\begin{defn} \label{def:horizontally_universal_square}
If $S \co \bbD \rightarrow \bbC$ is a double functor, then a {\it (horizontally)
universal square from the vertical morphism $j$ to $S$} is a square $\mu$ in
$\bbC$ of the form
$$\xymatrix{C_1 \ar[d]_{j} \ar[r]^{u_1} \ar@{}[dr]|{\mu} & SR_1 \ar[d]^{Sk} \\
C_2 \ar[r]_{u_2} & SR_2}$$
such that the map
\begin{equation} \label{doubleuniversalarrowdiagram}
\begin{array}{c}
\xymatrix@R=.5pc{\bbD(k,\ell) \ar[r] & \bbC(j,S\ell) \\
\beta' \ar@{|->}[r] & \left[\mu \; \; S\beta' \right] }
\end{array}
\end{equation}
is a bijection for all vertical morphisms $\ell$. There is of course
a dual notion of {\it (horizontally) universal square from a double
functor $S$ to a vertical morphism $j$}.
\end{defn}

\begin{prop}
Suppose $S\co \bfK' \rightarrow \bfK$ is a
2-functor and $u\co C \rightarrow SR$ is a morphism in
$\bfK$. Then $\mu:=i_u^v$ is universal from $1^v_C$ to $\bbH
S$ if and only if the functor
\begin{equation*}
\xymatrix@C=4pc@R=.5pc{\bfK'(R,D) \ar[r]^{S(-) \circ u} &
\bfK(C,SD)\\ f' \ar@{|->}[r] & \left[u  \; \; Sf' \right]}
\end{equation*}
is an isomorphism of categories. In
other words, the square $i^v_u$ in $\bbH \bfK$ is universal
if and only if the morphism $u$ of $\bfK$ is 2-universal.
\end{prop}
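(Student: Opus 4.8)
The plan is to unwind Definition~\ref{def:horizontally_universal_square} in the case $\bbD = \bbH\bfK'$, $\bbC = \bbH\bfK$, and $S = \bbH S$, and to match the resulting condition on $\mu = i^v_u$ with the $2$-universality of $u$. The structural fact that makes everything work is that $\bbH\bfK'$ and $\bbH\bfK$ have only trivial vertical morphisms. Thus in Definition~\ref{def:horizontally_universal_square} the vertical morphism $k$ is forced to be $1^v_R$, so $Sk = (\bbH S)(1^v_R) = 1^v_{SR}$, and $\mu = i^v_u$ is indeed a legitimate candidate for a universal square from $1^v_C$ to $\bbH S$, its top and bottom edge both being $u$. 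Likewise the vertical morphism $\ell$ in that definition ranges exactly over the $1^v_D$ for objects $D$ of $\bfK'$.

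First I would record two identifications, for each object $D$ of $\bfK'$. As recalled in Section~\ref{sec:Notation}, the set $\bbH\bfK'(1^v_R, 1^v_D)$ of squares with left boundary $1^v_R$ and right boundary $1^v_D$ is exactly the set of $2$-cells between arbitrary parallel $1$-morphisms $R \to D$ in $\bfK'$; that is, it is the arrow-set $\Mor \bfK'(R,D)$ of the hom-category. Similarly $\bbH\bfK(1^v_C, 1^v_{SD}) = \Mor \bfK(C, SD)$. Under these identifications the map $\beta' \mapsto [\mu \;\; S\beta']$ of \eqref{doubleuniversalarrowdiagram} is precisely the action on morphisms of the functor
$$\Phi_D \co \bfK'(R,D) \longrightarrow \bfK(C,SD), \qquad f' \longmapsto [u \;\; Sf']$$
appearing in the statement (the composite $[\mu \;\; S\beta']$ being, since $\mu = i^v_u$, just the horizontal composite of the vertical identity square on $u$ with $S\beta'$). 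Consequently $\mu = i^v_u$ is universal from $1^v_C$ to $\bbH S$ if and only if $\Mor \Phi_D$ is a bijection for every $D$, whereas $u$ is $2$-universal if and only if $\Phi_D$ is an isomorphism of categories for every $D$.

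It then remains to prove the elementary lemma that a functor $\Phi$ is an isomorphism of categories if and only if its action on the set of all morphisms is a bijection. The forward direction is immediate. For the converse, bijectivity of $\Mor \Phi$ forces bijectivity of $\Obj \Phi$, since objects correspond to identity morphisms and $\Phi$ preserves identities: injectivity on morphisms gives injectivity on objects, and for an object $B$ of the target one writes $\mathrm{id}_B = \Phi(\beta)$ for some morphism $\beta$; as $\Phi(\beta)$ is an identity, its source and target coincide, say at an object $A$, and $\Phi(\mathrm{id}_A) = \mathrm{id}_{\Phi A} = \mathrm{id}_B = \Phi(\beta)$ forces $\beta = \mathrm{id}_A$ and $\Phi A = B$. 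A functor that is bijective on objects and on morphisms admits a two-sided inverse functor, given by the inverse bijections, which automatically respect domains, codomains, composites and identities because $\Phi$ does. Combining this with the previous paragraph yields the proposition.

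The whole argument is essentially bookkeeping; the only point that requires care — and the only place the passage to the horizontal $2$-category is used in an essential way — is the step from a bijection of \emph{sets of squares} (the literal content of Definition~\ref{def:horizontally_universal_square}) to an \emph{isomorphism of hom-categories} (the content of $2$-universality), and this is exactly what the elementary lemma accomplishes, once one observes that the squares of $\bbH\bfK$ with identity vertical boundaries are precisely the morphisms of the hom-categories of $\bfK$.
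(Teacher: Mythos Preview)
Your proof is correct and follows essentially the same approach as the paper: both identify the map $\beta' \mapsto [\mu \;\; S\beta']$ as the morphism part of the whiskering functor $\Phi_D$, and both reduce the claim to the observation that a functor is an isomorphism of categories if and only if it is bijective on morphisms. The paper's proof is simply a terse two-line version of yours, stating this observation without the detailed verification you supply.
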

\begin{proof}
In this situation the assignment $\beta' \mapsto \left[\mu \; \;
\bbH S \beta' \right]$ is a functor, namely whiskering with $u$.
Then the claim follows from the observation that the morphism part
of a functor is bijective if and only if the functor is an
isomorphism of categories.
\end{proof}

\begin{prop} \label{towardsrepresentability1}
The bijection in \eqref{doubleuniversalarrowdiagram} is a natural
transformation of functors
\begin{equation} \label{functors}
\xymatrix{\bbD(k,-) \ar@{=>}[r] & \bbC(j,S-)}.
\end{equation}

Conversely, given $k$ and $j$, any natural bijection of functors as
in \eqref{functors} arises in this way from a unique square $\mu \in
\bbC(j,Sk)$ which is universal from $j$ to $S$.
\end{prop}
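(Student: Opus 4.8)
The plan is to derive both halves of the statement from the ordinary Yoneda lemma applied to the $1$-category $(\Ver\bbD,\Sq\bbD)$, whose objects are the vertical morphisms of $\bbD$ and whose composition is horizontal pasting of squares. By the hom-notation conventions of Section~\ref{sec:Notation}, $\bbD(k,-)\co(\Ver\bbD,\Sq\bbD)\to\mathbf{Set}$ sends a vertical morphism $\ell$ to $\bbD(k,\ell)$ and a square $\beta'$ to horizontal postcomposition by $\beta'$; this is precisely the covariant hom-functor of $(\Ver\bbD,\Sq\bbD)$ represented by the object $k$. Moreover $S$, being a double functor, restricts to a functor $(\Ver\bbD,\Sq\bbD)\to(\Ver\bbC,\Sq\bbC)$, so that $\bbC(j,S-)$ is again an honest $\mathbf{Set}$-valued functor on $(\Ver\bbD,\Sq\bbD)$, and \eqref{functors} is a morphism of two such functors.

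For the first assertion I would fix a square $\mu\in\bbC(j,Sk)$ universal from $j$ to $S$ and verify naturality of \eqref{doubleuniversalarrowdiagram} directly: for a square $\gamma$ from $\ell$ to $\ell'$ one must check that applying $\beta'\mapsto[\mu\;\;S\beta']$ and then postcomposing with $\gamma$ agrees with first postcomposing with $\gamma$ and then applying $\beta'\mapsto[\mu\;\;S\beta']$. This comes down to the identity $[[\mu\;\;S\beta']\;\;S\gamma]=[\mu\;\;S[\beta'\;\;\gamma]]$, which is immediate from $S$ strictly preserving horizontal composition together with associativity of horizontal composition in $\bbC$; bijectivity of \eqref{doubleuniversalarrowdiagram} is not used in this part.

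For the converse, given a natural bijection $\Phi\co\bbD(k,-)\Rightarrow\bbC(j,S-)$, I would set $\mu:=\Phi_k(i^h_k)\in\bbC(j,Sk)$, where $i^h_k$, the horizontal identity square on $k$, is the identity of $k$ in $(\Ver\bbD,\Sq\bbD)$. The Yoneda lemma then gives $\Phi_\ell(\beta')=\bbC(j,S\beta')(\mu)$ for every vertical $\ell$ and every $\beta'\in\bbD(k,\ell)$, and unwinding $\bbC(j,S\beta')$ --- using that $S$ preserves horizontal identity squares and that $\bbC$ is strict --- this reads $\Phi_\ell(\beta')=[\mu\;\;S\beta']$. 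Thus $\Phi$ is exactly the transformation \eqref{doubleuniversalarrowdiagram} attached to $\mu$, and since each $\Phi_\ell$ is a bijection, $\mu$ is universal from $j$ to $S$ in the sense of Definition~\ref{def:horizontally_universal_square}. Uniqueness is immediate, as $[\mu\;\;S(i^h_k)]=[\mu\;\;i^h_{Sk}]=\mu$ forces $\mu=\Phi_k(i^h_k)$.

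There is no real obstacle: the content is the classical Yoneda argument --- the reference to Borceux used for Proposition~\ref{prop:Double_Yoneda_Lemma} applies here as well --- and the only point that needs care is the bookkeeping of variance, namely recognizing that fixing the left boundary $k$ makes $\bbD(k,-)$ a covariant functor represented by $k$, together with the correct identification of the Yoneda element with the universal square $\mu$.
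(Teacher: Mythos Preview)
Your proof is correct and follows essentially the same approach as the paper's own proof: both reduce to the classical Yoneda/universal-arrow argument applied to the ordinary $1$-category $(\Ver\bbD,\Sq\bbD)$, with the paper citing Mac~Lane \cite[Proposition~1, p.~59]{MacLane:working} rather than invoking Yoneda by name. The key steps---naturality from $S$ preserving horizontal composition plus associativity, defining $\mu:=\Phi_k(i^h_k)$, and recovering $\Phi_\ell(\beta')=[\mu\;\;S\beta']$ from naturality at $\beta'$---are identical; your version is simply more explicit about the ambient $1$-category and about uniqueness.
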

\begin{proof}
The proof is very similar to that of Mac Lane~\cite[Proposition 1, page
59]{MacLane:working}.  The bijection is natural because $$\left[\mu \;\;
S\left[\beta' \;\; \gamma' \right] \right]=\left[\mu \; \; \left[S\beta' \;\;
S\gamma'
\right] \right].$$

For the converse, let $\phi \co \bbD(k,-) \Rightarrow
\bbC(j,S-)$ be a natural bijection, and define
$\mu:=\phi_k(i^h_k)$. The naturality diagram for $\phi$ and $\beta'$
yields $\left[\mu \;\; S\beta'\right]=\phi_{\ell}(\beta')$, which in
turn implies that \eqref{doubleuniversalarrowdiagram} is a bijection,
since $\phi_{\ell}$ is a bijection.
\end{proof}

For later use, we record the dual to Proposition~\ref{towardsrepresentability1}
using the inverse bijection.

\begin{prop} \label{towardsrepresentability2}
Universal squares in $\bbC(Sk,j)$ from $S\co \bbD \rightarrow \bbC$ to $j$ are
in bijective correspondence with natural bijections
$$\xymatrix{\bbC(S-,j) \ar@{=>}[r] & \bbD(-,k)}.$$
\end{prop}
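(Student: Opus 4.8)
The plan is to obtain Proposition~\ref{towardsrepresentability2} from Proposition~\ref{towardsrepresentability1} by a purely formal dualization, reading everything through the horizontal opposite $\bbC^{\text{horop}}$ and $\bbD^{\text{horop}}$, rather than redoing the Mac~Lane-style argument from scratch. Concretely, a universal square in $\bbC(Sk,j)$ from $S$ to $j$ is, by the dual of Definition~\ref{def:horizontally_universal_square}, a square $\nu$ of the shape
$$\xymatrix{SR_1 \ar[d]_{Sk} \ar[r]^{v_1} \ar@{}[dr]|{\nu} & C_1 \ar[d]^{j} \\ SR_2 \ar[r]_{v_2} & C_2}$$
such that post-composing, $\beta' \mapsto \left[ S\beta' \;\; \nu \right]$, gives a bijection $\bbD(\ell,k) \to \bbC(S\ell,j)$ for every vertical $\ell$. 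Passing to horizontal opposites turns such a $\nu$ into a square from the vertical morphism $j$ to the double functor $S^{\text{horop}}\co \bbD^{\text{horop}} \to \bbC^{\text{horop}}$ in the sense of Definition~\ref{def:horizontally_universal_square}, with the composite $\left[ S\beta' \;\; \nu\right]$ in $\bbC$ becoming the composite $\left[ \mu \;\; S\beta' \right]$ in $\bbC^{\text{horop}}$, where $\mu$ is $\nu$ viewed in the opposite. Similarly, $\bbC(Sk,\ell) = \bbC^{\text{horop}}(\ell, Sk)$ and so on, so the bijection condition matches up on the nose.

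First I would spell out this horizontal-opposite dictionary: the identification of $\bbC(a,b)$ with $\bbC^{\text{horop}}(b,a)$ at the level of squares, the fact that a double functor $S$ induces $S^{\text{horop}}$, and that horizontal natural transformations correspondingly dualize. Then I would simply invoke Proposition~\ref{towardsrepresentability1} applied to $S^{\text{horop}}\co \bbD^{\text{horop}} \to \bbC^{\text{horop}}$: it gives a bijective correspondence between universal squares $\mu \in \bbC^{\text{horop}}(j, S^{\text{horop}}k)$ from $j$ to $S^{\text{horop}}$ and natural bijections $\bbD^{\text{horop}}(k,-) \Rightarrow \bbC^{\text{horop}}(j, S^{\text{horop}}-)$. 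Translating both sides back through the dictionary, $\bbC^{\text{horop}}(j, S^{\text{horop}}-) = \bbC(S-, j)$ and $\bbD^{\text{horop}}(k,-) = \bbD(-,k)$, and the universal squares on the left are exactly the universal squares in $\bbC(Sk,j)$ from $S$ to $j$. This yields the claimed bijective correspondence with natural bijections $\bbC(S-,j) \Rightarrow \bbD(-,k)$.

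The one point requiring a little care — and the only real obstacle — is making sure the \emph{direction of composition} is handled correctly under the opposite, since horizontal composition reverses: the universal square appears on the left in $\left[\mu\;\; S\beta'\right]$ for Proposition~\ref{towardsrepresentability1} but on the right in $\left[S\beta'\;\;\nu\right]$ for the dual statement, and this is precisely what the passage to $\bbC^{\text{horop}}$ fixes. I would also note explicitly, as the statement says, that the relevant bijection is the \emph{inverse} of the one in \eqref{doubleuniversalarrowdiagram}, which is automatic once the opposite-category bookkeeping is in place. The remainder — naturality of the correspondence in the square variable, and that the square recovered from a natural bijection $\phi$ is $\mu := \phi_k(i^h_k)$ — transports verbatim from the proof of Proposition~\ref{towardsrepresentability1} and needs no new argument.
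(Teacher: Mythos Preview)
Your proposal is correct and matches the paper's approach exactly: the paper introduces this proposition with the sentence ``For later use, we record the dual to Proposition~\ref{towardsrepresentability1} using the inverse bijection'' and gives no further proof, so your formal dualization via horizontal opposites together with the explicit remark about passing to the inverse bijection is precisely what is intended.
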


\section{Double Adjunctions} \label{sec:Double_Adjunctions}
For any 2-category $\mathbf{K}$, there is a notion of adjunction
in $\mathbf{K}$ \cite{Kelly-Street:2cat}.
Namely, two 1-morphisms $f\co A \rightarrow B$ and $g\co B \rightarrow A$ in
$\mathbf{K}$ are adjoint if there exist 2-cells $\eta\co 1_A
\Rightarrow gf$ and $\varepsilon\co fg \Rightarrow 1_B$
satisfying the triangle identities.  From the 2-categories
$\mathbf{DblCat_h}$ and $\mathbf{DblCat_v}$ we thus get two notions of
adjunction between double categories.

\begin{defn} \label{global}
A {\em horizontal double adjunction} is an
adjunction in the 2-category $\mathbf{DblCat_h}$.
A {\em vertical double adjunction} is an adjunction in the
2-category $\mathbf{DblCat_v}$.
\end{defn}

The notions of horizontal and vertical adjunctions are of course transpose to
each other, so the result we list in this section for horizontal adjunctions are
also valid for vertical adjunctions.  However, as soon as the involved double
categories have further structure, like the foldings and cofoldings we consider
from Section~\ref{sec:Double_Adjunctions_with_Foldings_Cofoldings} and onwards,
the two notions behave differently.  In this paper we need both notions.

A more general notion of vertical adjunction was introduced and studied by
Grandis and Par\'e~\cite{Grandis-Pare:adjoints} (cf.~further comments below).
Vertical adjunctions were also studied by
Garner~\cite[Appendix~A]{Garner:DoubleClubs} and
Shulman~\cite[Section~8]{Shulman:Framed}.

   For the basic theory, which we treat in this section, we work only
   with
horizontal adjunctions.  The 2-category $\mathbf{DblCat_h}$ is the same as the
2-category $\Cat(\mathbf{Cat})$ of internal categories in $\mathbf{Cat}$,
internal functors, and internal natural transformations, which leads to various
characterizations of horizontal double adjunctions in terms of universal arrows
and bijections of hom-sets, along the lines of Mac Lane
\cite[Theorem 2, p.83]{MacLane:working}.  Our results in this vein in
Theorem~\ref{thm:double_adjunction_descriptions} can be deduced from more
general results of Grandis--Par\'e~\cite{Grandis-Pare:adjoints}, but we have
included the proofs since they are quite natural from the internal viewpoint
(which is not mentioned in \cite{Grandis-Pare:adjoints}).  The first
novelty comes when trying to characterize adjunctions in terms of presheaves:
here it turns out we need parametrized presheaves, which is the content of
Theorem~\ref{thm:representability_characterization_of_dbladjs}.

In Section~\ref{sec:DoubleAdjunctionBetweenEndosInSpanAndMonadsInSpan} we
present a completely worked example of a {\em vertical} double adjunction: the free
and forgetful double functors between endomorphisms and monads in $\Sp$.  This
is an extension of the classical adjunction between small directed graphs and
small categories.

\bigskip

Let $\bbA$ and $\bbX$ be double categories.  Since a horizontal double
adjunction
is precisely an internal adjunction, an explicit description is this:
a  horizontal double
adjunction from $\bbX$ to $\bbA$ consists of double functors
\begin{equation} \label{FG}
\xymatrix@C=4pc{\bbX \ar@/^1pc/[r]^{F} & \ar@/^1pc/[l]^{G} \bbA}
\end{equation}
and horizontal natural transformations
$$\xymatrix@1{\eta\co 1_{\bbX} \ar@{=>}[r] & GF}$$
$$\xymatrix@1{\varepsilon \co FG \ar@{=>}[r] & 1_{\bbA}}$$
such that the composites
$$\xymatrix@C=3pc{G \ar@{=>}[r]^-{\eta*i_G} & GFG \ar@{=>}[r]^-{i_G*\varepsilon} & G}$$
$$\xymatrix@C=3pc{F \ar@{=>}[r]^-{i_F*\eta} & FGF \ar@{=>}[r]^-{\varepsilon*i_F} & F}$$
are the respective identity horizontal natural transformations.
Here $F$ is the {\em horizontal left adjoint}, $G$ is the {\em horizontal right
adjoint}, and we write $F \dashv G$ to denote this horizontal adjunction.
In this section we consider only horizontal adjunctions, and suppress the
adjective ``horizontal'' for brevity.

\begin{thm}[Characterizations of horizontal double adjunctions] \label{thm:double_adjunction_descriptions}
A horizontal double adjunction $F \dashv G$ is completely determined by the
items in any one of the following lists.
\begin{enumerate}
\item \label{FGunituniversal}
Double functors $F$, $G$ as in \eqref{FG} and a horizontal natural
transformation $\eta\co 1_\bbX \Rightarrow GF$ such
that for each vertical morphism $j$ in $\bbX$, the square $\eta_j$
is universal from $j$ to $G$.
\item \label{Guniversalarrow}
A double functor $G$ as in \eqref{FG} and functors
$$\xymatrix{F_0\co
(\Obj \bbX, \Ver \bbX) \ar[r] & (\Obj \bbA, \Ver \bbA)}$$
$$\xymatrix{\eta \co (\Obj \bbX, \Ver \bbX) \ar[r] & (\Hor \bbX, \Sq \bbX)}$$
such that for each vertical morphism $j$ in $\bbX$ the square
$\eta_j$ is of the form
$$\xymatrix{X \ar[d]_j \ar[r]^-{\eta_X} \ar@{}[dr]|-{\eta_j} & GF_0X \ar[d]^{GF_0j} \\
Y \ar[r]_-{\eta_Y} & GF_0Y}$$ and is universal from $j$ to $G$. Then
the double functor $F$ is defined on vertical arrows by $F_0$ and on
squares $\chi$ by universality via the equation $\left[ \eta_{s\chi}
\; \; GF\chi \right] =\left[\chi \; \; \eta_{t\chi} \right]$.
\item \label{FGcounituniversal}
Double functors $F$, $G$ as in \eqref{FG} and a horizontal natural
transformation $\varepsilon \co FG \Rightarrow 1_{\bbA}$ such that for each
vertical morphism $k$ in $\bbA$, the square $\varepsilon_k$ is universal from
$F$ to $k$.
\item \label{Funiversalarrow}
A double functor $F$ as in \eqref{FG} and functors
$$\xymatrix{G_0\co
(\Obj \bbA, \Ver \bbA) \ar[r] & (\Obj \bbX, \Ver \bbX)}$$
$$\xymatrix{\varepsilon \co (\Obj \bbA, \Ver \bbA) \ar[r] & (\Hor \bbA, \Sq \bbA)}$$
such that for each vertical morphism $k$ in $\bbA$ the square
$\varepsilon_k$ is of the form
$$\xymatrix{FG_0A \ar[d]_{FG_0 k} \ar[r]^-{\varepsilon_{A}} \ar@{}[dr]|-{\varepsilon_k} & A \ar[d]^k \\
FG_0 B \ar[r]_-{\varepsilon_{B}} & B}$$ and is universal from $F$ to
$k$. Then the double functor $G$ is defined on vertical morphisms by
$G_0$ and on squares $\alpha$ by universality via the equation
$\left[FG \alpha \;\; \varepsilon_{t\alpha} \right] = \left[
\varepsilon_{s\alpha} \;\; \alpha \right]$.
\item \label{local}
Double functors $F$, $G$ as in \eqref{FG} and a bijection
$$\xymatrix@1{\varphi_{j,k}\co \bbA(Fj,k) \ar[r] & \bbX(j,Gk)}$$ natural
in the vertical morphisms $j$ and $k$ and compatible with vertical
composition.

Naturality here means natural as a functor
$$\xymatrix{(\Ver \bbX, \Sq \bbX)^{\text{\rm op}} \times (\Ver \bbA, \Sq \bbA) \ar[r] & \mathbf{Set}}.$$
That is, for squares $\sigma \in \bbX(j',j)$, $\alpha \in \bbA(Fj,k)$, $\tau \in \bbA(k,k')$ and squares $\sigma$, we have
$$\varphi\left( \left[F \sigma \;\; \alpha\right]\right)=\left[\sigma \;\; \varphi\left(\alpha \right)\right]\phantom{.}$$
$$\varphi\left( \left[ \alpha \;\; \tau \right] \right) = \left[ \varphi\left(\alpha \right) \;\; G\tau \right].$$

Compatibility with vertical composition means $$\varphi \left(
\begin{bmatrix}
\alpha \\ \beta \end{bmatrix} \right)=\begin{bmatrix}\varphi
(\alpha)
\\ \varphi(\beta)\end{bmatrix}.$$
\item \label{laxapproach}
Double functors $F$, $G$ as in \eqref{FG} and a horizontal natural
isomorphism between the vertically lax double functors
(parameterized presheaves)
$$\xymatrix@1{\bbA(F-,-)\co\bbX^\text{\rm horop} \times \bbA \ar[r] & \Sp^t}$$
$$\xymatrix@1{\bbX(-,G-)\co\bbX^\text{\rm horop} \times \bbA \ar[r] & \Sp^t}.$$
\end{enumerate}
\end{thm}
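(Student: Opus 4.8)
The plan is to prove Theorem~\ref{thm:double_adjunction_descriptions} by a cyclic sequence of implications among the six characterizations, organized so that each step reuses the machinery of Sections~\ref{sec:presheaves} and \ref{sec:Universal_Squares}. The skeleton I would use is: the classical data $(F,G,\eta,\varepsilon)$ with triangle identities $\Longleftrightarrow$ \ref{FGunituniversal} $\Longleftrightarrow$ \ref{Guniversalarrow} $\Longleftrightarrow$ \ref{local} $\Longleftrightarrow$ \ref{laxapproach}, together with the mirror-image equivalences \ref{FGcounituniversal} $\Longleftrightarrow$ \ref{Funiversalarrow} obtained by applying the already-proven cases to $\bbX^{\text{horop}}$, $\bbA^{\text{horop}}$ and the double functor $F$ in place of $G$. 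Since a horizontal double adjunction is, by definition, an adjunction in the 2-category $\mathbf{DblCat_h} \cong \Cat(\mathbf{Cat})$, the equivalence between the full data and the unit-universality form \ref{FGunituniversal} is just the internalization into $\mathbf{Cat}$ of the standard equivalence (unit $+$ universality of unit-components $\Leftrightarrow$ adjunction), so it suffices to check that all the constructions in Mac~Lane's argument are functorial and hence carry over verbatim to the internal setting; the one point needing care is that ``universal from $j$ to $G$'' in Definition~\ref{def:horizontally_universal_square} is the correct internal rendering of ``universal arrow,'' which it is because $\bbX$ and $\bbA$ as internal categories have $\bbX_1 = (\Hor, \Sq)$ and $\bbX_0 = (\Obj, \Ver)$, so a universal square from $j$ is precisely a universal arrow from $j \in \bbX_0$ to the functor $G_1 : \bbA_1 \to \bbX_1$ lying over $G_0$.

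For \ref{FGunituniversal} $\Leftrightarrow$ \ref{Guniversalarrow} I would observe that \ref{Guniversalarrow} merely weakens \ref{FGunituniversal} by not assuming $F$ given in advance: the content is that, given $G$ and a choice of universal square $\eta_j$ from each vertical $j$ to $G$, one can \emph{define} $F$ on vertical arrows by $F_0$ and on squares $\chi$ by the displayed universality equation $[\eta_{s\chi}\;\;GF\chi] = [\chi\;\;\eta_{t\chi}]$, and then check that $F_1$ is functorial and that $F$ strictly preserves horizontal composition and units --- the latter two being forced by uniqueness in the universal property applied to horizontal composites and horizontal identity squares. This is the internal analogue of the classical ``adjoint functor from universal arrows'' construction, and again functoriality is the routine (but not entirely trivial) verification. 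The equivalence of \ref{FGunituniversal}--\ref{Guniversalarrow} with \ref{local} is the internalization of the hom-set bijection description of an adjunction: given \ref{FGunituniversal}, set $\varphi_{j,k}(\alpha) = [\eta_j\;\;G\alpha]$ and invoke Proposition~\ref{towardsrepresentability1} fiberwise (with $S = G$, $k := Fj$) to get the bijection $\bbA(Fj,k) \cong \bbX(j,Gk)$; naturality in $\sigma, \tau$ is exactly the two equations listed under \ref{local}, and compatibility with vertical composition of $\varphi$ follows from the second identity in \eqref{equ:horizontal_natural_transformation} for $\eta$ (i.e.\ $\eta_{[j_1 ; j_2]} = [\eta_{j_1} ; \eta_{j_2}]$) together with the interchange law. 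Conversely, from \ref{local} one recovers $\eta_j := \varphi_{j,Fj}(i^h_{Fj})$ and checks it is universal (Proposition~\ref{towardsrepresentability1}) and horizontally natural, the horizontal-naturality squares of $\eta$ coming from naturality of $\varphi$ in the horizontal direction.

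The link \ref{local} $\Leftrightarrow$ \ref{laxapproach} is where the parametrized-presheaf formalism does its work: unwinding Definition~\ref{def:parameterized_presheaf} and Example~\ref{exa:homsets_are_a_parametrized_presheaf}, a horizontal natural isomorphism $\bbA(F-,-) \Rightarrow \bbX(-,G-)$ of vertically lax double functors $\bbX^{\text{horop}}\times\bbA \to \Sp^t$ is \emph{by definition} a family of bijections $\varphi_{X,A} : \Hor\bbA(FX,A) \to \Hor\bbX(X,GA)$ natural in horizontal morphisms of both variables, \emph{together with} compatible morphisms of the vertical spans indexed by pairs $(j,k)$ of vertical morphisms --- and chasing the definitions, the span-level data of $\bbA(F-,-)$ and $\bbX(-,G-)$ over $(j,k)$ have total spaces $\bbA(Fj,k)$ and $\bbX(j,Gk)$ respectively, so a horizontal natural iso is exactly a bijection $\varphi_{j,k} : \bbA(Fj,k) \cong \bbX(j,Gk)$ over the object-level bijections and commuting with the source/target legs, which is precisely ``natural in the vertical morphisms $j$ and $k$.'' Finally, the vertical laxness comparison squares of both $\bbA(F-,-)$ and $\bbX(-,G-)$ are, by Example~\ref{exa:homsets_are_a_parametrized_presheaf}, just vertical composition in $\bbA$ resp.\ $\bbX$, so the requirement that $\varphi$ be a morphism of vertically lax functors (commuting with these comparison squares) is exactly the equation $\varphi\big(\begin{bmatrix}\alpha\\\beta\end{bmatrix}\big) = \begin{bmatrix}\varphi(\alpha)\\\varphi(\beta)\end{bmatrix}$ of \ref{local}. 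Thus \ref{laxapproach} and \ref{local} are \emph{literally} the same data under the dictionary of Section~\ref{sec:presheaves}, and the equivalence is a matter of carefully matching definitions rather than of proof.

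\textbf{The main obstacle.} The genuinely laborious step --- and the one I expect to absorb most of the write-up --- is the translation \ref{local} $\Leftrightarrow$ \ref{laxapproach}: not because anything deep happens, but because one must carefully open up the definition of a horizontal natural transformation of vertically lax double functors into $\Sp^t$ (whose vertical direction is \emph{weak}, so one must track the normality of $\Sp^t$ to know the unit coherence squares are identities) and verify that every piece of that structure corresponds to exactly one clause of \ref{local}, with nothing left over and nothing missing. The rest of the cycle is the internal-category rendering of wholly standard 1-categorical adjunction arguments (Mac~Lane~\cite[Theorem~2, p.~83]{MacLane:working}), where the only thing to check is that each construction is functorial enough to internalize; I would state this once and not belabor it. The two counit-side characterizations \ref{FGcounituniversal}--\ref{Funiversalarrow} then need no separate argument beyond noting they are the horizontal-opposite transposes, with the roles of $F$ and $G$ interchanged, of \ref{FGunituniversal}--\ref{Guniversalarrow}, invoking Proposition~\ref{towardsrepresentability2} in place of Proposition~\ref{towardsrepresentability1}.
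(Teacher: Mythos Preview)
Your proposal is correct and follows essentially the same approach as the paper: the same formulas $\varphi_{j,k}(\alpha)=[\eta_j\;\;G\alpha]$ and $\eta_j=\varphi(i^h_{Fj})$, the same use of universality to define $F$ on squares in \ref{Guniversalarrow}, duality for \ref{FGcounituniversal}--\ref{Funiversalarrow}, and the same observation that \ref{local} $\Leftrightarrow$ \ref{laxapproach} is a literal matching of data under the dictionary of Example~\ref{exa:homsets_are_a_parametrized_presheaf}. The only organizational difference is that the paper anchors the argument at \ref{local} (proving Definition~\ref{global} $\Leftrightarrow$ \ref{local} first, using both $\eta$ and $\varepsilon$ to give $\varphi$ and $\varphi^{-1}$ directly), whereas you anchor at \ref{FGunituniversal}; this is inessential.
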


\begin{rmk}
  As mentioned, Grandis and Par\'e~\cite{Grandis-Pare:adjoints} have introduced
  a more general notion of double adjunction, which mixes colax and lax double
  functors, and due to this mixture, this notion is {\em not} an instance of an
  adjunction in a bicategory.  However, they observe that if at least one of the
  functors is pseudo (so that both functors can be considered colax or both
  lax), then the notion is the 2-categorical notion from the 2-category of
  double categories, either colax or lax double functors, and {\em vertical}
  natural transformations.  We just add to their observations that in the strict
  case we can transpose, and find that the strict version of their notion
  specializes to Definition~\ref{global} above.  Under these relationships,
  Theorem~\ref{thm:double_adjunction_descriptions} becomes essentially a special
  case of results of Grandis--Par\'e: characterization (v) is the transpose
  of the strict version of \cite[Theorem~3.4]{Grandis-Pare:adjoints}, and
  characterization (iv) is the transpose of the strict version of
  \cite[Theorem~3.6]{Grandis-Pare:adjoints}.  The other characterizations in
  Theorem~\ref{thm:double_adjunction_descriptions} are variations,
  but (vi) appears to be new.
\end{rmk}

\begin{proof}
We first prove Definition~\ref{global} is equivalent to \ref{local},
then we use this equivalence to prove the other equivalences (we provide much detail in the equivalence
Definition~\ref{global} $\Leftrightarrow$ \ref{local} because we will need these details for a pseudo version in
Theorem~\ref{thm:double_adjunction_def_iff_v_pseudo}).
In each equivalence, we omit the proof that the two procedures are
inverse to one another.\\
[2mm] \noindent Definition~\ref{global} $\Rightarrow$ \ref{local}.
Suppose $\langle F,G,\eta,\varepsilon\rangle$ is a double
adjunction. Then for any square $\gamma$ of the form
$$\xymatrix{\ar[r] \ar[d]_j \ar@{}[dr]|{\gamma} & \ar[d]^{\ell} \\ \ar[r] &  }$$ we have
$\left[ \eta_j \; \; GF\gamma \right]=\left[\gamma \; \;
\eta_{\ell}\right]$ by the horizontal naturality of $\eta$. We
define $\varphi_{j,k}$ and $\varphi_{j,k}^{-1}$ by
$$\varphi_{j,k}(\alpha):=\left[\eta_j \; \; G\alpha\right]$$
$$\varphi_{j,k}^{-1}(\beta):=\left[F \beta \; \; \varepsilon_k\right].$$
Then we have
$$\aligned \varphi\varphi^{-1} \beta &= \varphi\left[F\beta \; \; \varepsilon_k \right] \\
&= \left[\eta_j \; \; GF\beta \; \; G\varepsilon_k\right]
\\
&=\left[\beta \; \; \eta_{Gk} \; \; G \varepsilon_k \right] \text{ (by horizontal naturality)}\\
&=\beta \text{ (by triangle identity)}
\endaligned
$$
and similarly $\varphi^{-1}\varphi(\alpha)=\alpha$.

For the naturality of $\varphi_{j,k}$ in $k$, we have
$$\varphi\left( \left[ \alpha \;\; \tau \right] \right)\overset{\text{def}}{=} \left[\eta_j \;\; G[\alpha \;\; \tau]\right] = \left[\eta_j \;\; G\alpha \;\; G\tau\right] \overset{\text{def}}{=} \left[ \varphi\left(\alpha \right) \;\; G\tau \right].$$
Naturality of $\varphi_{j,k}$ in $j$ is similar, but additionally uses the naturality of $\eta$.

For the compatibility of $\varphi_{j,k}$ with vertical composition, we must use the interchange law from \eqref{equ:interchange_law} and the resulting convention \eqref{equ:interchange2}, as well as the compatibility of the horizontal natural transformation $\eta$ with vertical composition.
$$\begin{bmatrix}\varphi (\alpha) \\ \varphi(\beta)\end{bmatrix}=
\begin{bmatrix} \eta_j & G\alpha \\ \eta_{m} & G\beta \end{bmatrix}=
\begin{bmatrix} \eta_{\vcomp{j}{m}} & G\vcomp{\alpha}{\beta} \end{bmatrix}$$

We now have $\langle F,G,\varphi \rangle$
as in \ref{local}.\\
[2mm] \noindent \ref{local} $\Rightarrow$ Definition~\ref{global}.
From $\langle F,G,\varphi\rangle$ as in \ref{local}, we define
horizontal natural transformations by
$$\eta_j:=\varphi(i^h_{Fj})$$
$$\varepsilon_k:=\varphi^{-1}(i^h_{Gk}).$$

The assignment $\eta$ is natural because $i^h_{-}$ is a horizontal identity square
$$\left[\eta_j \;\; GF \gamma \right] \overset{\text{def}}{=} \left[ \varphi(i^h_{Fj}) \;\; GF \gamma \right]=\varphi\left[i^h_{Fj} \;\; F\gamma \right] = \varphi(\gamma)$$
$$\left[ \gamma \;\; \eta_{\ell} \right] \overset{\text{def}}{=} \left[\gamma \;\; \varphi(i^h_{F \ell}) \right] = \varphi \left[ F\gamma \;\; i^h_{F\ell} \right]= \varphi(\gamma).$$
For the compatibility of $\eta$ with vertical composition, we use the fact that $i^h_{-}$ is compatible with vertical composition
$$\eta_{\vcomp{j}{m}}\overset{\text{def}}{=} \varphi(i^h_{F\vcomp{j}{m}})=\varphi \begin{bmatrix}
i^h_{Fj} \\ i^h_{Fm} \end{bmatrix} = \begin{bmatrix}
\varphi(i^h_{Fj}) \\ \varphi(i^h_{Fm}) \end{bmatrix} \overset{\text{def}}{=}\begin{bmatrix}
\eta_j \\ \eta_m \end{bmatrix}.$$
The assignment $\varepsilon$ is similarly a horizontal natural transformation.

To verify that $\xymatrix@1@C=3pc{G \ar@{=>}[r]^-{\eta*i_G} & GFG \ar@{=>}[r]^-{i_G*\varepsilon} & G}$ is
the identity horizontal natural transformation on $G$ we have
$$\left[ \eta_{Gk} \;\; G(\varepsilon_k) \right]\overset{\text{def}}{=}\left[\varphi(i^h_{FGk}) \;\; G\varphi^{-1}(i^h_{Gk})\right]=\varphi\left[ i^h_{FGk} \;\; \varphi^{-1}(i^h_{Gk})\right]=i^h_{Gk}.$$
The proof of the other triangle identity is similar.

Finally, we now have $\langle F,G,\eta,\varepsilon\rangle$ as in Definition~\ref{global}.
We acknowledge the exposition of Mac Lane~\cite[pages 81--82]{MacLane:working} for this proof. \\
[2mm] \noindent \ref{FGunituniversal} $\Rightarrow$ \ref{local}.
Suppose we have $\langle F,G,\eta \rangle$ as in
\ref{FGunituniversal}. The universality of $\eta_j$ says that
\begin{equation} \label{etajuniversal}
\begin{array}{c}
\xymatrix@R=.5pc{\bbA(Fj,k) \ar[r] & \bbX(j,Gk) \\
\alpha \ar@{|->}[r] & \left[\eta_j \;\; G \alpha \right]}
\end{array}
\end{equation}
is a bijection. Clearly this bijection is natural in $j$ and $k$,
and compatible with vertical composition, so we obtain $\langle
F,G,\varphi \rangle$ as in
description \ref{local}. \\
[2mm] \noindent \ref{local} $\Rightarrow$ \ref{FGunituniversal}.
From the first part, we know that Definition~\ref{global} is
equivalent to \ref{local} and that
$\varphi_{j,k}(\alpha)=\left[\eta_j \; \; G\alpha\right]$. This
gives us $F$, $G$, and $\eta$. The universality of $\eta_j$ then
follows, because the map in \eqref{etajuniversal} is equal to
$\varphi_{j,k}$ and is therefore
bijective. \\
[2mm] \noindent \ref{FGunituniversal} $\Rightarrow$
\ref{Guniversalarrow}. The data in \ref{Guniversalarrow} are just a
restriction of the data in \ref{FGunituniversal}. \\
[2mm] \noindent \ref{Guniversalarrow} $\Rightarrow$
\ref{FGunituniversal}. The universality of $\eta_j$ guarantees that
for each square $\chi$ in $\bbX$ there is a unique square $F\chi$
such that $\left[ \eta_{s\chi} \; \; GF\chi \right] =\left[\chi \;
\; \eta_{t\chi} \right]$. This defines $F$ on squares $\chi$ in
$\bbX$, and we take $F$ to be $F_0$ on the vertical morphisms of
$\bbX$. Then $F$ is a double functor by the universality and the
hypothesis that $F_0$ and $\eta$ are functors. Finally, $\eta$ is
natural because of the defining equation $\left[ \eta_{s\chi} \; \;
GF\chi \right]
=\left[\chi \; \; \eta_{t\chi} \right]$. \\
[2mm] \noindent \ref{global} $\Leftrightarrow$
\ref{FGcounituniversal}. The proof of the equivalence
Definition~\ref{global} $\Leftrightarrow$ \ref{FGcounituniversal} is
dual to
the proof the equivalence Definition~\ref{global} $\Leftrightarrow$ \ref{FGunituniversal}. \\
[2mm] \noindent \ref{FGcounituniversal} $\Leftrightarrow$
\ref{Funiversalarrow}. The proof of the equivalence
\ref{FGcounituniversal} $\Leftrightarrow$ \ref{Funiversalarrow} is
dual to the proof of the equivalence \ref{FGunituniversal}
$\Leftrightarrow$ \ref{Guniversalarrow}. \\
[2mm] \noindent \ref{local} $\Leftrightarrow$ \ref{laxapproach}.
We first point out that the data of \ref{local} and \ref{laxapproach} are the same:
to obtain the outer maps of the span 2-cells for the horizontal natural isomorphism
in \ref{laxapproach}, we take $j$ and $k$ to be $1_X$ and $1_A$ and obtain bijections
$\bbA(FX,A)\cong\bbX(X,GA)$. To obtain the middle maps of the span 2-cells for
\ref{laxapproach}, we directly take the $\varphi_{j,k}$'s. Conversely, to obtain
the bijections $\varphi_{j,k}$ in \ref{local} from the horizontal natural isomorphism
in \ref{laxapproach}, we simply take the middle maps of the span 2-cells. So the data
of \ref{local} and \ref{laxapproach} are the same. As to the conditions: for the data
to form the horizontal natural transformation of \ref{laxapproach}, two compatibilities
are required: one horizontal compatibility equation for each square, which amounts
precisely to naturality of $\varphi_{j,k}$ in \ref{local}, and one compatibility
condition with respect to the coherence squares of the vertically lax double functors.
Since these coherence squares are given by vertical composition
(cf.~Example~\ref{exa:homsets_are_a_parametrized_presheaf}), this condition amounts
precisely to $\varphi$ being compatible with vertical composition.
\\

This completes the proof of the equivalence of
Definition~\ref{global} with each of \ref{FGunituniversal},
\ref{Guniversalarrow}, \ref{FGcounituniversal},
\ref{Funiversalarrow}, \ref{local}, and \ref{laxapproach}.
\end{proof}

We next prove a slightly weakened version of the equivalence Definition~\ref{global}$\Leftrightarrow$\ref{local}. The transpose of this slightly weakened version will be used in the proof of the {\it vertical} double adjunction between $\bEnd(\Sp)$ and $\bMnd(\Sp)$ in Proposition~\ref{freemonad_in_Span}.
\begin{thm}[Pseudo version of Theorem~\ref{thm:double_adjunction_descriptions}~\ref{local}] \label{thm:double_adjunction_def_iff_v_pseudo}
Let $\bbA$ and $\bbX$ be normal, vertically weak double categories. Let $F\colon \bbX \to \bbA$ and $G \colon \bbA \to \bbX$ be strict double functors, that is, $F$ and $G$ strictly preserve all compositions and identities of $\bbX$ respectively $\bbA$.  Then there exist strict horizontal natural transformations $\eta\co 1_{\bbX} \Rightarrow GF$ and $\varepsilon \co FG \Rightarrow 1_{\bbA}$ satisfying the two triangle identities if and only if statement \ref{local} of Theorem~\ref{thm:double_adjunction_descriptions} holds.
\end{thm}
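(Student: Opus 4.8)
The plan is to reinspect the proof of the equivalence Definition~\ref{global} $\Leftrightarrow$ \ref{local} of Theorem~\ref{thm:double_adjunction_descriptions} and verify that it carries over \emph{verbatim} to the present setting; accordingly, I would present the argument largely by pointing to that proof while auditing each step for its use of strictness. The structural facts that make this possible are the following: since $\bbX$ and $\bbA$ are only \emph{vertically} weak, their horizontal composition of squares is strict, so all manipulations of the bracket $[\,\cdot\;\cdot\,]$ of \eqref{bracket_notation_for_square_compositions} remain available, including ---by normality--- the unit laws $[i^h_j\;\;\alpha]=\alpha=[\alpha\;\;i^h_k]$ emphasized in Section~\ref{sec:Notation}; the interchange law \eqref{equ:interchange_law} and its consequence \eqref{equ:interchange2} hold strictly in a vertically weak double category; the strict double functors $F$ and $G$ preserve horizontal composition, vertical composition, and all identities of squares on the nose; and the strict horizontal natural transformations $\eta$, $\varepsilon$ (which, as noted in Section~\ref{sec:Notation}, do make sense here) satisfy the compatibilities \eqref{equ:horizontal_natural_transformation}, in particular $\theta\vcomp{j_1}{j_2}=\vcomp{\theta j_1}{\theta j_2}$. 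These are precisely the ingredients the strict proof uses.

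For the direction Definition~\ref{global} $\Rightarrow$ \ref{local}, given strict $\eta$, $\varepsilon$ satisfying the two triangle identities, I would set $\varphi_{j,k}(\alpha):=[\eta_j\;\;G\alpha]$ and $\varphi^{-1}_{j,k}(\beta):=[F\beta\;\;\varepsilon_k]$, exactly as in the strict proof. That $\varphi$ and $\varphi^{-1}$ are mutually inverse follows from horizontal naturality of $\eta$ and $\varepsilon$ together with the triangle identities, which in the normal double categories read $[\eta_{Gk}\;\;G\varepsilon_k]=i^h_{Gk}$ and $[F\eta_j\;\;\varepsilon_{Fj}]=i^h_{Fj}$ with $i^h$ a genuine horizontal identity; naturality of $\varphi_{j,k}$ in $k$ (resp.\ in $j$) reduces to strictness of $G$ on horizontal composition (resp.\ additionally to naturality of $\eta$) plus associativity of horizontal composition; and compatibility with vertical composition is the single interchange computation
$$\begin{bmatrix}\varphi(\alpha)\\\varphi(\beta)\end{bmatrix}=\begin{bmatrix}\eta_j & G\alpha\\ \eta_m & G\beta\end{bmatrix}=\begin{bmatrix}\eta_{\vcomp{j}{m}} & G\vcomp{\alpha}{\beta}\end{bmatrix},$$
which uses \eqref{equ:interchange_law}, compatibility of $\eta$ with vertical composition, and strictness of $G$. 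Note that the one vertical composite $\vcomp{\alpha}{\beta}$ appearing here is simply the chosen composite in $\bbA$, and no reassociation or reunitalization of vertical composites ever occurs.

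For the converse \ref{local} $\Rightarrow$ Definition~\ref{global}, given $\langle F,G,\varphi\rangle$ I would define $\eta_j:=\varphi(i^h_{Fj})$ and $\varepsilon_k:=\varphi^{-1}(i^h_{Gk})$. This is the one place where normality is genuinely indispensable, exactly as flagged in Section~\ref{sec:Notation}: to see that $\eta$ is a horizontal natural transformation one needs $[\eta_j\;\;GF\gamma]=\varphi[i^h_{Fj}\;\;F\gamma]=\varphi(\gamma)$ and $[\gamma\;\;\eta_\ell]=\varphi[F\gamma\;\;i^h_{F\ell}]=\varphi(\gamma)$, which require $i^h_{Fj}$ and $i^h_{F\ell}$ to be strict horizontal identities in $\bbA$; compatibility of $\eta$ with vertical composition uses that $i^h_{-}$ is compatible with vertical composition (valid even without normality) together with strictness of $F$ and compatibility of $\varphi$; and the triangle identity $[\eta_{Gk}\;\;G\varepsilon_k]=\varphi[i^h_{FGk}\;\;\varphi^{-1}(i^h_{Gk})]=i^h_{Gk}$ again needs $i^h_{FGk}$ to be a genuine identity. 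The verifications for $\varepsilon$ and for the second triangle identity are dual, and, as in Theorem~\ref{thm:double_adjunction_descriptions}, I would omit the routine check that the two passages are mutually inverse. I expect the only real obstacle to be one of bookkeeping: one must confirm that every unit law invoked for the horizontal bracket is an instance of normality (and not of some stronger strictness of $\bbX$ or $\bbA$), and that every vertical composite encountered is disposed of by a single application of interchange or of strictness of $F$ or $G$, so that the weakness of the vertical composition of $\bbX$ and $\bbA$ is never actually exercised.
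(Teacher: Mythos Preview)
Your proposal is correct and takes essentially the same approach as the paper: both argue that the strict proof of Definition~\ref{global} $\Leftrightarrow$ \ref{local} goes through verbatim once one audits the hypotheses, noting for the forward direction that horizontal composition is strictly associative, $G$ is strict, and interchange holds in any pseudo double category, and for the converse that normality makes $i^h_{-}$ a genuine horizontal identity. Your write-up is in fact more detailed than the paper's, which records only these bullet points and refers back to the strict proof.
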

\begin{proof}
The proof is the same as the proof of Definition~\ref{global} $\Leftrightarrow$ \ref{local} in
Theorem~\ref{thm:double_adjunction_descriptions}, only we must verify that the arguments there still make sense for the present hypotheses.

For the direction Definition~\ref{global} $\Rightarrow$ \ref{local}, we note i) the horizontal composition of squares is strictly associative (since the pseudo double categories are weak only vertically), ii) $G$ strictly preserves horizontal compositions, and iii) the interchange law holds in $\bbA$ and $\bbX$ as in any pseudo double category \cite[page 210]{Grandis-Pare:limits}.

For the direction \ref{local} $\Rightarrow$ Definition~\ref{global}, we note that $i^h_-$ is a horizontal identity square because $\bbA$ and $\bbX$ are normal (recall the discussion before Example~\ref{exa:span}).
\end{proof}

\medskip

In ordinary 1-category theory, a functor $F\co \bfA \rightarrow \bfX$
admits a right adjoint if and only if the presheaf $\bfA(F-,A)$ is
representable for each $A$. But for double categories and double
functors $F\co \bbA \rightarrow \bbX$, we must consider the
representability of the parameterized $\Sp^t$-valued presheaf $\bbA(F-,-)$.
We arrive at the following characterization of horizontal left double
adjoints in terms of parameterized representability.
\begin{thm}
  \label{thm:representability_characterization_of_dbladjs}
A double functor $F \co \bbX \rightarrow \bbA$ admits a horizontal right double adjoint if and only
if the parameterized presheaf on $\bbX$
$$\xymatrix{\bbA(F-,-)\co \bbX^{\text{\rm horop}} \times \bbV_1 \bbA \ar[r] & \Sp^t}$$
is represented by a double functor $G_0\co \bbV_1 \bbA \rightarrow \bbX$.
\end{thm}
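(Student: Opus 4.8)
The plan is to deduce both implications from Theorem~\ref{thm:double_adjunction_descriptions}, using the inclusion double functor $\iota\colon\bbV_1\bbA\hookrightarrow\bbA$ of the vertical $1$-category of $\bbA$ (the same one that, by the last example of Section~\ref{sec:presheaves}, represents $\bbA(-,-)\colon\bbA^{\text{horop}}\times\bbV_1\bbA\to\Sp^t$).

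\emph{From adjoint to representation.} If $F\dashv G$ is a horizontal double adjunction, then by Theorem~\ref{thm:double_adjunction_descriptions}~\ref{laxapproach} there is a horizontal natural isomorphism $\bbA(F-,-)\cong\bbX(-,G-)$ of parameterized presheaves on $\bbX^{\text{horop}}\times\bbA$. Precomposing with $\id_{\bbX^{\text{horop}}}\times\iota$ — and using that precomposition of a vertically lax double functor, resp.\ of a horizontal natural transformation, with a strict double functor is again vertically lax, resp.\ horizontal natural — turns this into an isomorphism $\bbA(F-,-)\cong\bbX(-,(G\iota)-)$ of parameterized presheaves on $\bbX^{\text{horop}}\times\bbV_1\bbA$. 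So the parameterized presheaf of the statement is representable, in the sense of Definition~\ref{defn:parametrized_presheaf_representability}, by the double functor $G_0:=G\iota\colon\bbV_1\bbA\to\bbX$, which is just $G$ restricted to the vertical $1$-category.

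\emph{From representation to adjoint.} Conversely, suppose $\Phi\colon\bbA(F-,-)\cong\bbX(-,G_0-)$ is a horizontal natural isomorphism of parameterized presheaves on $\bbX^{\text{horop}}\times\bbV_1\bbA$. Since $\bbV_1\bbA$ has no nontrivial horizontal morphisms or squares, the double functor $G_0\colon\bbV_1\bbA\to\bbX$ is the same thing as a functor $(\Obj\bbA,\Ver\bbA)\to(\Obj\bbX,\Ver\bbX)$, which is exactly the datum ``$G_0$'' of Theorem~\ref{thm:double_adjunction_descriptions}~\ref{Funiversalarrow}. The isomorphism $\Phi$ amounts to a bijection $\Phi^{\text{Obj}}_{X,A}\colon\Hor\bbA(FX,A)\to\Hor\bbX(X,G_0A)$ for each pair of objects and an invertible morphism of vertical spans $\varphi_{j,k}\colon\bbA(Fj,k)\to\bbX(j,G_0k)$ for each pair of vertical morphisms, compatible with the span source and target maps via the $\Phi^{\text{Obj}}$; moreover $\varphi$ is natural in $j$ and $k$ (with respect to squares of $\bbX$ and vertical morphisms of $\bbA$) and compatible with vertical composition, since by Example~\ref{exa:homsets_are_a_parametrized_presheaf} the coherence squares of the two vertically lax double functors are given by vertical composition in $\bbA$, resp.\ $\bbX$. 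Define $\varepsilon_A:=(\Phi^{\text{Obj}}_{G_0A,A})^{-1}(1^h_{G_0A})$ on objects and $\varepsilon_k:=\varphi_{G_0k,k}^{-1}(i^h_{G_0k})$ on vertical morphisms. Compatibility of $\varphi$ with the source and target maps shows $\varepsilon_k$ has exactly the shape displayed in~\ref{Funiversalarrow}, with top and bottom $\varepsilon_{s^vk}$ and $\varepsilon_{t^vk}$; compatibility of $\varphi$ with vertical composition and vertical identities, together with $i^h_{\vcomp{k}{k'}}=\vcomp{i^h_k}{i^h_{k'}}$ and $G_0$ preserving vertical structure, shows $\varepsilon$ is a functor $(\Obj\bbA,\Ver\bbA)\to(\Hor\bbA,\Sq\bbA)$. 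Finally, naturality of $\varphi$ in the first variable, applied to a square $\beta\in\bbX(\ell,G_0k)$, gives $\varphi_{\ell,k}\bigl([F\beta \;\; \varepsilon_k]\bigr)=[\beta \;\; \varphi_{G_0k,k}(\varepsilon_k)]=[\beta \;\; i^h_{G_0k}]=\beta$; since $\varphi_{\ell,k}$ is a bijection, the assignment $\beta\mapsto[F\beta \;\; \varepsilon_k]$ is its inverse, so $\varepsilon_k$ is a universal square from $F$ to $k$ for every vertical $k$ in $\bbA$. Theorem~\ref{thm:double_adjunction_descriptions}~\ref{Funiversalarrow} then produces a horizontal right double adjoint $G$ of $F$ with $G\iota=G_0$.

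\emph{The main obstacle.} All the content is in the converse, and conceptually in one point: a representation over $\bbV_1\bbA$ records $G$ only on objects and vertical morphisms, so one must still manufacture $G$ on horizontal morphisms and on honest squares of $\bbA$ and verify functoriality and adjointness. The key realization is that this extension is precisely what characterization~\ref{Funiversalarrow} of Theorem~\ref{thm:double_adjunction_descriptions} is engineered to carry out (it defines $G$ on a square $\alpha$ via the universality of $\varepsilon_{t^v\alpha}$), so it suffices to check that $\Phi$ supplies exactly its hypotheses. Within that, the two substantive verifications — as opposed to bookkeeping — are that the counit squares assemble into a functor, where compatibility with the lax coherence cells is essential and where one sees that restricting to $\bbV_1\bbA$ retains enough information since $\bbV_1\bbA$ keeps all vertical composites, and that each $\varepsilon_k$ is universal, which is the short Yoneda-style computation displayed above.
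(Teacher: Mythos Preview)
Your proof is correct and follows essentially the same route as the paper: the forward direction restricts the isomorphism from Theorem~\ref{thm:double_adjunction_descriptions}\ref{laxapproach} along $\bbV_1\bbA\hookrightarrow\bbA$, and the converse extracts the data required by Theorem~\ref{thm:double_adjunction_descriptions}\ref{Funiversalarrow}. The only difference is cosmetic: where the paper cites Proposition~\ref{towardsrepresentability2} to pass from the natural bijections $\varphi(-,k)$ to universal squares $\varepsilon_k$, you write out the formula $\varepsilon_k=\varphi_{G_0k,k}^{-1}(i^h_{G_0k})$ and verify universality by the short Yoneda computation directly --- which is precisely the content of that proposition. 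One minor wording slip: you mention naturality of $\varphi$ in $k$ ``with respect to vertical morphisms of $\bbA$'', but horizontal naturality in the $\bbV_1\bbA$ variable is vacuous (there are no nontrivial squares there); fortunately your argument only uses naturality in the first variable and compatibility with the lax coherence cells, so nothing is affected.
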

\begin{rmk}
  Recalling the definition of $\bbV_1$ from Section~\ref{sec:Notation}, and the
  parametrized presheaves from~Definitions~\ref{def:parameterized_presheaf} and
  \ref{defn:parametrized_presheaf_representability},
  we see that
Theorem~\ref{thm:representability_characterization_of_dbladjs} essentially says
that a double functor $F$ admits a horizontal right double adjoint if and only
if for every vertical morphism $k$ in $\bbA$, the classical presheaf
$$\xymatrix@1{\bbA(F-,k)\co (\Ver \bbX, \Sq \bbX)^{\text{\rm op}} \ar[r] & \mathbf{Set}}$$
is representable in a way compatible with vertical composition.
\end{rmk}
\begin{proof}
Suppose that a horizontal right double adjoint $G$ exists. Then by
Theorem~\ref{thm:double_adjunction_descriptions} \ref{laxapproach}
the parameterized presheaves $\bbA(F-,-)$ and $\bbX(-,G-)$ are
horizontally naturally isomorphic as vertically lax functors on
$\bbX^{\text{\rm horop}} \times \bbA$, so their restrictions to
$\bbX^{\text{\rm horop}} \times \bbV_1 \bbA$ are also horizontally
naturally isomorphic. The double functor $G_0$ is simply the
restriction of $G$. We have represented $\bbA(F-,-)$ by $G_0$.

In the other direction, suppose that the parameterized presheaf on
$\bbX$
$$\xymatrix{\bbA(F-,-)\co \bbX^{\text{\rm horop}} \times \bbV_1 \bbA \ar[r] & \Sp^t}$$
is representable by a double functor $G_0\co
\bbV_1 \bbA \rightarrow \bbX$, and let $$\xymatrix{\varphi\co
\bbA(F-,-) \ar@{=>}[r] & \bbX(-,G_0-)}$$ be a horizontally natural
isomorphism between vertically lax functors. For vertical morphisms
$(j,k)$, we then have an isomorphism of spans in $\mathbf{Set}$.
$$\xymatrix@C=4pc{\bbA(Fs^vj,s^vj) \ar[r]^{\varphi(s^vj,s^vj)} & \bbX(s^vj,G_0s^vj) \\
\bbA(Fj,j) \ar[r]^{\varphi(j,k)} \ar[u]^{s^v} \ar[d]_{t^v} & \bbX(j,G_0j) \ar[u]_{s^v} \ar[d]^{t^v} \\
\bbA(Ft^vj,t^vj) \ar[r]_{\varphi(t^vj,t^vj)} & \bbX(t^vj,G_0t^vj)
}$$ Since $\bbV_1 \bbA$ has no nontrivial horizontal morphisms
or squares, the condition of horizontal naturality in $k$ is
satisfied vacuously. So, essentially we have horizontally natural
bijections $\varphi(-,k)\co \bbA(F-,k) \Rightarrow \bbX(-,G_0k)$,
and these correspond to universal squares from $F$ to $k$ of the form
$$\xymatrix{FG_0A \ar[d]_{FG_0 k} \ar[r]^-{\varepsilon(A)} \ar@{}[dr]|-{\varepsilon(k)} & A \ar[d]^k \\
FG_0 B \ar[r]_-{\varepsilon(B)} & B}$$ by Proposition~\ref{towardsrepresentability2}.
The assignments of $\varepsilon(A)$
and $\varepsilon(k)$ to $A$ and $k$ form a functor
$$\xymatrix{\varepsilon \co (\Obj \bbA, \Ver \bbA) \ar[r] & (\Hor \bbX, \Sq \bbX)}$$
because of the compatibility of $\varphi$ with the vertical laxness
of the parameterized presheaves. Finally, the characterization in
Theorem~\ref{thm:double_adjunction_descriptions}
\ref{Funiversalarrow} tells us that $G_0$ extends to a horizontal right adjoint
$G$, defined on squares $\alpha$ using universality and the equation
$\left[FG \alpha \;\; \varepsilon(t^h\alpha) \right] = \left[
\varepsilon(s^h\alpha) \;\; \alpha \right]$.
\end{proof}

\begin{rmk}\label{monoidal}
  In this section we have treated horizontal double adjunctions.  By
  transposition, all the results are equally valid for vertical double
  adjunctions.  In practice, however, the two notions are very different, as
  further properties or structure of the double categories in question may break
  the symmetry.  An instructive example is given by
  one-object/one-vertical-arrow double categories: these are monoids internal to
  $\mathbf{Cat}$, i.e.~monoidal categories (with strictness according to the
  strictness of the double categories).  Double functors between such are
  precisely monoidal functors (again with according strictness).
  Vertical natural transformations are precisely monoidal natural
  transformations.  Horizontal natural transformations are
  something quite different, some sort of intertwiners: for two double functors
  $F,G \co \bbD \to \bbC$ between one-object/one-vertical-arrow double
  categories, a horizontal natural transformation gives to a horizontal arrow
  $S$ of $\bbC$ (i.e.~an object of the corresponding monoidal category $\bfC$)
  and an equation (or $2$-cell) $S\otimes F = G \otimes S$ (where $\otimes$
  denotes horizontal composition, i.e.~the tensor product in $\bfC$).
\end{rmk}

\section{Compatibility with Foldings or Cofoldings}
\label{sec:Double_Adjunctions_with_Foldings_Cofoldings}

Many double categories of interest have additional structure that allows one to
reduce certain questions about the double category to questions about the horizontal
2-category.  There are several different, but closely related,
formalisms for this sort of situation,
cf.~Brown--Mosa~\cite{BrownMosa}, Brown--Spencer~\cite{BrownSpencer},
Fiore~\cite{Fiore:PseudoAlgebrasPseudoDoubleCategories},
Grandis--Par\'e~\cite{Grandis-Pare:limits},
Shulman~\cite{Shulman:Framed}; comparisons between the different formalisms
can be found in \cite{Fiore:PseudoAlgebrasPseudoDoubleCategories} and
\cite{Shulman:Framed}.  In
this section we investigate how the additional structure of {\em folding} or
{\em cofolding}
on double categories allows us to reduce
questions concerning adjunctions
to their horizontal 2-categories.

The notion of folding was introduced in
\cite{Fiore:PseudoAlgebrasPseudoDoubleCategories}, extending notions from
\cite{BrownMosa}.  A {\em folding} associates to every vertical morphism a
horizontal morphism in a way that gives a bijection between certain squares in
the double category and certain $2$-cells in the horizontal $2$-category.  The
precise definition is given below.  In Example~\ref{exa:span_admits_a_folding}),
we illustrate the folding for the double category of spans, which to a set map
(vertical morphism) $j:A\to C$ associates the span (horizontal morphism) $A
\overset{1^h_A}{\leftarrow} A \overset{j}{\rightarrow}C$.
The double category of spans was discussed in Example~\ref{exa:span}.

A folding can be seen as a kind of covariant action of the vertical $1$-category on the
horizontal $2$-category, a sort of pushforward operation;
see \cite[Section 4]{Fiore:PseudoAlgebrasPseudoDoubleCategories}.
A {\em cofolding} is
similar to a folding but constitutes instead a contravariant kind of action of the
vertical $1$-category on the horizontal $2$-category, a sort of pullback
operation.  In Example~\ref{exa:span_admits_a_cofolding}, we illustrate the cofolding
for the double category of spans, which to a vertical map $j:A\to C$ associates
the horizontal morphism $C \overset{j}{\leftarrow} A
\overset{1^h_A}{\rightarrow}A$.

Folding together with cofolding is equivalent to having a framing in the sense
of Shulman~\cite{Shulman:Framed}, the category of spans being an archetypical
example.  However, some important double categories admit either a folding or a
cofolding but not both, and it is necessary to study the two notions separately.
This is the case for the double categories of endomorphisms and monads,
$\bEnd(\bbD)$ and $\bMnd(\bbD)$, in
Section~\ref{sec:EndosAndMonadsInADoubleCategory}: if $\bbD$ admits a cofolding,
 then so do $\bEnd(\bbD)$ and $\bMnd(\bbD)$
(cf.~Proposition~\ref{prop:cofolding_on_D_induces_cofolding_on_Mnd(D)_and_on_End(D)}),
but the analogous statement for foldings does not seem to be true.

The main result in this section,
Proposition~\ref{prop:folding_double_adjunction_iff_horizontal_double_adjunction},
states that if $F$ and $G$ are double functors between double categories with
foldings, and $F$ and $G$ preserve the foldings, then $F$ and $G$ are
horizontally double adjoint if and only if the horizontal 2-functors $\bfH F$
and $\bfH G$ are 2-adjoint.  For the special case of quintet double categories,
which we characterize in terms of folding with fully faithful holonomy in
Lemma~\ref{ffhol=>quintet} and Proposition~\ref{prop:H_and_V_fully_faithful},
we establish stronger characterizations
of double adjunctions: briefly, all notions of adjunction agree in this case,
see Corollary~\ref{cor:fully_faithful_holonomy_implies_all_adjunctions_equivalent}.

We begin the detailed discussion of foldings and cofolding with the notion of
quintets.

\begin{examp}[Direct quintets] \label{exa:quintets}
  With a $2$-category $\mathbf{K}$ is associated a double category
  $\mathbb{Q}\bfK$,  called the double category of {\em direct quintets}:
  its objects are the objects of $\bfK$, horizontal and vertical morphisms are
  the morphisms of $\bfK$, and the squares
\begin{equation} \label{equ:square_alpha}
\begin{array}{c}
\xymatrix{A \ar[r]^f \ar[d]_j \ar@{}[dr]|\alpha & B \ar[d]^k \\ C \ar[r]_g & D}
\end{array}
\end{equation}
are the 2-cells
$\alpha\co k \circ f \Rightarrow g \circ j$ in $\bfK$. The horizontal 2-category of
$\mathbb{Q}\bfK$ is $\bfK$. The vertical
2-category of $\mathbb{Q}\bfK$ is $\bfK$ with the
2-cells reversed.  The terminology ``quintet'' is due to Ehresmann~\cite{EhresmannQuintets}
for the case $\mathbf{K}=\mathbf{Cat}$.  We add the word ``direct'' to
distinguish from the ``inverse quintets'' introduced in
Example~\ref{exa:twistedquintets},
as we shall need both variants.
\end{examp}

The double category $\bbQ\bfK$ is entirely determined by its horizontal
2-category,
in fact, a quintet square $\alpha$ is by
definition a 2-cell in $\bfK$ between appropriate composites of
boundary components of $\alpha$. Similarly, any double category with
folding, as in the following definition,
is determined by its vertical 1-category and horizontal
2-category in the sense that squares with a given boundary are in
bijective correspondence with 2-cells in the horizontal 2-category
between appropriate ``boundary composites''.

\begin{defn}\label{def:folding}
  (Cf.~Brown--Mosa~ \cite{BrownMosa} for the edge-symmmetric case and
  Fiore~\cite{Fiore:PseudoAlgebrasPseudoDoubleCategories} for the general case.)
  A {\em folding} on a double category $\bbD$ is
a double functor
$\Lambda\co\mathbb{D} \rightarrow
\mathbb{Q}\mathbf{H}\mathbb{D}$ which is the identity on the
horizontal 2-category $\mathbf{H}\mathbb{D}$ of $\mathbb{D}$ and is
fully faithful on squares.
We proceed to spell out the details.

A {\it folding on a double category $\mathbb{D}$} consists of the following.
\begin{enumerate}
\item
A 2-functor $\overline{(-)}\co(\mathbf{V}\mathbb{D})_0 \rightarrow \mathbf{H}\mathbb{D}$
which is the identity on objects.  Here, the notation $(\mathbf{V}\mathbb{D})_0$
denotes the vertical 1-category of $\bbD$.  In other words, to each vertical
morphism $j\co A \rightarrow C$, there is associated a horizontal morphism
$\overline{j}\co A \rightarrow C$ with the same domain and codomain in a
functorial way.  We call this 2-functor $j \mapsto \overline{j}$ the {\it
holonomy}, following the terminology of Brown-Spencer in \cite{BrownSpencer},
who first distinguished the notion.
\item
Bijections $\Lambda^{f,k}_{j,g}$ from squares in $\mathbb{D}$ with
boundary
\begin{equation} \label{equ:boundary1:folding}
\begin{array}{c}
\xymatrix@R=3pc@C=3pc{A \ar[r]^f \ar[d]_j & B \ar[d]^k \\ C \ar[r]_g
& D}
\end{array}
\end{equation}
to squares in $\mathbb{D}$ with boundary
\begin{equation} \label{equ:boundary2:folding}
\begin{array}{c}
\xymatrix@R=3pc@C=3pc{A \ar[r]^{[f \ \overline{k}]} \ar@{=}[d] &
D \ar@{=}[d]
\\ A \ar[r]_{[\overline{j} \ g]} & D.}
\end{array}
\end{equation}
\end{enumerate}
These bijections are required to satisfy the following axioms.
\begin{enumerate}
\item
$\Lambda$ is the identity if $j$ and $k$ are vertical identity
morphisms.
\item \label{def:folding:Lambdahorizontal} \label{def:folding:Lambdahorizontalequ}
$\Lambda$ preserves horizontal composition of squares, that is,
$$\Lambda\left(
\begin{array}{c}\xymatrix@R=4pc@C=4pc{A \ar[r]^{f_1} \ar[d]_j
\ar@{}[dr]|{\alpha} & B \ar[r]^{f_2} \ar[d]|{\tb{k}}
\ar@{}[dr]|{\beta} & C \ar[d]^{\ell} \\ D \ar[r]_{g_1} & E
\ar[r]_{g_2} & F}
\end{array}
 \right) \quad
=
\begin{array}{c}
\xymatrix@R=4pc@C=4pc{ A \ar[r]^{[f_1 \ f_2 \ \overline{\ell}]}
\ar@{=}[d] \ar@{}[dr]|{[i_{f_1}^v \ \Lambda(\beta)]} & F \ar@{=}[d]  \\
A \ar[r]|{[f_1 \ \overline{k} \ g_2]} \ar@{=}[d]
\ar@{}[dr]|{[\Lambda(\alpha) \ i_{g_2}^v]} & F \ar@{=}[d] \\ A
\ar[r]_{[\overline{j} \ g_1 \ g_2]} & F.}
\end{array}$$
\item \label{def:folding:Lambdavertical} \label{def:folding:Lambdaverticalequ}
$\Lambda$ preserves vertical composition of squares, that is,
$$\Lambda \left( \begin{array}{c} \xymatrix@R=4pc@C=4pc{A \ar[d]_{j_1} \ar[r]^f
\ar@{}[dr]|\alpha & B \ar[d]^{k_1} \\ C \ar[r]|{\lr{g}}
\ar@{}[dr]|\beta \ar[d]_{j_2} & D \ar[d]^{k_2} \\ E \ar[r]_h &
F,}\end{array} \right) \quad
 =
\begin{array}{c} \xymatrix@R=4pc@C=4pc{A \ar[r]^{[ f \
\overline{k}_1 \ \overline{k}_2]} \ar@{=}[d]
\ar@{}[dr]|{[\Lambda(\alpha) \ i_{\overline{k}_2}^v]} & F
\ar@{=}[d] \\
A \ar[r]|{[\overline{j}_1 \ g \ \overline{k}_2]}
\ar@{}[dr]|{[i_{\overline{j}_1}^v \ \Lambda(\beta)]} \ar@{=}[d]
& F \ar@{=}[d] \\ A \ar[r]_{[\overline{j}_1 \ \overline{j}_2 \
h] } & F.}
\end{array}
$$
\item \label{def:folding:Lambdaidentity} \label{def:folding:Lambdaidentityequ}
$\Lambda$ preserves identity squares, that is,
$$\Lambda\left(
\begin{array}{c}
\xymatrix@R=4pc@C=4pc{A \ar@{=}[r] \ar[d]_{j} \ar@{}[dr]|{i_j^h} & A \ar[d]^j \\
B \ar@{=}[r] & B}
\end{array} \right) \quad
=
\begin{array}{c}
\xymatrix@R=4pc@C=4pc{A \ar[r]^{\overline{j}}
\ar@{=}[d] \ar@{}[dr]|{i^v_{\overline{j}}} & B \ar@{=}[d] \\
A \ar[r]_{\overline{j}}  & B.}
\end{array}
$$
\end{enumerate}
\end{defn}

\begin{examp} \label{exa:span_admits_a_folding}
The double category $\Sp$ admits a folding. The holonomy is
$$\left( \begin{array}{c} \xymatrix@R=1.5pc{A \ar[d]_j \\ C} \end{array} \right)
\xymatrix{\ar@{|->}[r] &}
\left( A \overset{1^h_A}{\leftarrow} A \overset{j}{\rightarrow}C
\right)$$ and the folding is
$$\left( \begin{array}{c}
\xymatrix{A \ar[d]_j & \ar[l]_{f_0} Y \ar[d]^\alpha \ar[r]^{f_1} & B
\ar[d]^k \\ C & \ar[l]^{g_0} Z \ar[r]_{g_1} & D}
\end{array} \right)
\xymatrix{\ar@{|->}[r] &} \left(
\begin{array}{c}\xymatrix@C=3pc{A \ar@{=}[d] & Y \ar[l]_{f_0} \ar[r]^{k\circ f_1}
  \ar[d]^{(f_0,\alpha)} & D \ar@{=}[d] \\
A & \ar[l]^-{\pr_1} A \times_C Z \ar[r]_-{g_1 \circ \pr_2} &
D}\end{array} \right).$$
\end{examp}

\begin{rmk} \label{rmk:folding_implies_2cell_composition_compatibility}
If a double category $\bbD$ is equipped with a folding, then 2-cell
composition in the vertical 2-category $\bfV \bbD$ corresponds to
2-cell composition in the horizontal 2-category $\bfH \bbD$. More
precisely, if $f_1,f_2,g_1,g_2$ are identities in
Definition~\ref{def:folding}~\ref{def:folding:Lambdahorizontal},
then $\left[\; \alpha \;\; \beta \; \right]$ is the vertical
composition $\beta \odot \alpha$ in the 2-category $\bfV \bbD$, and
compatibility with horizontal composition says $\Lambda(\beta \odot
\alpha)=\Lambda(\alpha) \odot \Lambda(\beta)$. Concerning vertical
composition in the 2-category $\bfV \bbD$, if $f,g,h$ in
Definition~\ref{def:folding}~\ref{def:folding:Lambdavertical}, then
$\vcomp{\alpha}{\beta}$ is the horizontal composition $\beta *
\alpha$ in the 2-category $\bfV \bbD$, and $\Lambda (\beta * \alpha)
= \Lambda (\beta) * \Lambda(\alpha)$.
\end{rmk}

\begin{defn}[Compatibility with folding] \label{def:morphismwithfolding}
Let $\bbC$ and $\bbD$ be double categories with folding.
\begin{enumerate}
\item
A double functor $F\co \bbC \rightarrow \bbD$ is
{\it compatible with the foldings} if
$$F(\overline{j})=\overline{F(j)} \qquad \text{ and }
\qquad F(\Lambda^{\bbC}(\alpha))=\Lambda^{\bbD}(F(\alpha))$$
for all vertical morphisms $j$ and squares $\alpha$ in $\bbC$.
\item
Let $F,G\co \bbC \rightarrow \bbD$ be double functors compatible with the
foldings.  A horizontal natural transformation $\theta\co F \Rightarrow G$ is
{\it compatible with the foldings} if for all vertical morphisms $j$ in $\bbC$
the following equation holds.
\begin{equation}\label{equ:2cellwithfolding:horizontal_compatibility}
\Lambda\left(
\begin{array}{c}
\xymatrix@R=4pc@C=4pc{FA \ar[r]^{\theta A} \ar[d]_{Fj} \ar@{}[dr]|{\theta j}
& GA \ar[d]^{Gj} \\
FC \ar[r]_{\theta C} & GC}
\end{array} \right) \quad
=
\begin{array}{c}
\xymatrix@R=4pc@C=4pc{FA \ar[r]^{[\theta A \ G\overline{j}]}
\ar@{=}[d] \ar@{}[dr]|{i^v_{[\theta A \
G\overline{j}]}} & GC \ar@{=}[d] \\
FA \ar[r]_{[ F\overline{j} \ \theta C]} \ar[r] & GC}
\end{array}
\end{equation}
\item
Let $F,G\co \bbC \rightarrow \bbD$ be double functors
compatible with the foldings.
A vertical natural transformation $\sigma\co F \Rightarrow G$ is
{\it compatible with the foldings} if for all vertical morphisms
$j$ the following equation holds.
\begin{equation}\label{equ:2cellwithfolding:vertical_compatibility}
\Lambda\left(
\begin{array}{c}
\xymatrix@R=4pc@C=4pc{FA \ar[r]^{F \overline{j}} \ar[d]_{\sigma A}
\ar@{}[dr]|{\sigma \overline{j}} & FC \ar[d]^{\sigma C} \\
GA \ar[r]_{G \overline{j}} & GC}
\end{array} \right) \quad
=
\begin{array}{c}
\xymatrix@R=4pc@C=4pc{FA \ar[r]^{[F \overline{j} \ \overline{\sigma
C}]} \ar@{=}[d] \ar@{}[dr]|{i^v_{[F \overline{j} \ \overline{\sigma
C}]}} & GC \ar@{=}[d] \\
FA \ar[r]_{[\overline{\sigma A}  \ G\overline{j}]} \ar[r] & GC}
\end{array}
\end{equation}
\end{enumerate}
\end{defn}

\bigskip

Some double categories admit a cofolding rather than a folding, as the following
variant of the quintets of Example~\ref{exa:quintets} illustrates.  For double
categories of monads and endomorphisms (in the sense of
\cite{FioreGambinoKock:DoubleMonadsI} and
Section~\ref{sec:EndosAndMonadsInADoubleCategory} below),
cofoldings are more relevant than
foldings, since cofoldings are inherited from the underlying double category
(cf.~Proposition~\ref{prop:cofolding_on_D_induces_cofolding_on_Mnd(D)_and_on_End(D)})
whereas foldings are not.

\begin{examp}[Inverse quintets] \label{exa:twistedquintets}
For $\bfK$ a 2-category, the double category of {\em inverse quintets}
$\overline{\mathbb{Q}}\bfK$ is
the double category in which the objects are the objects of
$\bfK$, the horizontal 1-category is the underlying 1-category
of $\bfK$, the vertical 1-category is the {\it opposite} of
the underlying 1-category of $\bfK$, and the squares
$$\begin{array}{c} \xymatrix{A \ar[r]^f \ar[d]_{j^\text{op}}
\ar@{}[dr]|\alpha & B \ar[d]^{k^\text{op}} \\ C \ar[r]_g & D}
\end{array}$$ are 2-cells of the form $$\begin{array}{c} \xymatrix{A
\ar[r]^f \ar@{=>}[dr]^\alpha & B  \\ C \ar[r]_g \ar[u]^j & D
\ar[u]_k}\end{array}$$ in $\bfK$. The double category
$\overline{\mathbb{Q}}\bfK$ admits a {\it cofolding} in the
following sense.
\end{examp}

\begin{defn} \label{def:cofolding}
A {\em cofolding} is a double functor
$\Lambda\co\mathbb{D} \rightarrow
\overline\bbQ\mathbf{H}\mathbb{D}$ which is the identity on the
horizontal 2-category $\mathbf{H}\mathbb{D}$ of $\mathbb{D}$ and is
fully faithful on squares.  We proceed to spell out the details.

A {\it cofolding on a double category $\mathbb{D}$} consists of the following.
\begin{enumerate}
\item
A 2-functor $(-)^*\co(\mathbf{V}\mathbb{D})_0^{\text{op}} \rightarrow
\mathbf{H}\mathbb{D}$ which is the identity on objects.  Here, the notation
$(\mathbf{V}\mathbb{D})_0^{\text{op}}$ denotes the opposite of the vertical
1-category of $\bbD$.  In other words, to each vertical morphism $j\co A
\rightarrow C$, there is associated a horizontal morphism $j^*\co C \rightarrow
A $ in a functorial way.  We call the 2-functor $j \mapsto j^*$ the {\it
coholonomy}.
\item
Bijections $\Lambda^{f,k}_{j,g}$ from squares in $\mathbb{D}$ with
boundary
\begin{equation} \label{equ:boundary1:cofolding}
\begin{array}{c}
\xymatrix@R=3pc@C=3pc{A \ar[r]^f \ar[d]_j & B \ar[d]^k \\ C \ar[r]_g
& D}
\end{array}
\end{equation}
to squares in $\mathbb{D}$ with boundary
\begin{equation} \label{equ:boundary2:cofolding}
\begin{array}{c}
\xymatrix@R=3pc@C=3pc{C \ar[r]^{[j^* \ f ]} \ar@{=}[d] &
B \ar@{=}[d]
\\ C \ar[r]_{[g \ k^*]} & B.}
\end{array}
\end{equation}
\end{enumerate}
These bijections are required to satisfy the following axioms.
\begin{enumerate}
\item
$\Lambda$ is the identity if $j$ and $k$ are vertical identity
morphisms.
\item \label{def:cofolding:Lambdahorizontal}
\label{def:cofolding:Lambdahorizontalequ}
$\Lambda$ preserves horizontal composition of squares, that is,
$$\Lambda\left(
\begin{array}{c}\xymatrix@R=4pc@C=4pc{A \ar[r]^{f_1} \ar[d]_j
\ar@{}[dr]|{\alpha} & B \ar[r]^{f_2} \ar[d]|{\tb{k}}
\ar@{}[dr]|{\beta} & C \ar[d]^{\ell} \\ D \ar[r]_{g_1} & E
\ar[r]_{g_2} & F}
\end{array}
 \right) \quad
=
\begin{array}{c}
\xymatrix@R=4pc@C=4pc{ D \ar[r]^{[j^* \ f_1 \ f_2]}
\ar@{=}[d] \ar@{}[dr]|{[ \Lambda(\alpha) \ i_{f_2}^v ]} & C \ar@{=}[d]  \\
D \ar[r]|{[g_1 \ k^* \ f_2]} \ar@{=}[d]
\ar@{}[dr]|{[i_{g_1}^v \ \Lambda(\beta) ]} & C \ar@{=}[d] \\ D
\ar[r]_{[g_1 \ g_2 \ \ell^*]} & C.}
\end{array}$$
\item \label{def:cofolding:Lambdavertical} \label{def:cofolding:Lambdaverticalequ}
$\Lambda$ preserves vertical composition of squares, that is,
$$\Lambda \left( \begin{array}{c} \xymatrix@R=4pc@C=4pc{A \ar[d]_{j_1} \ar[r]^f
\ar@{}[dr]|\alpha & B \ar[d]^{k_1} \\ C \ar[r]|{\lr{g}}
\ar@{}[dr]|\beta \ar[d]_{j_2} & D \ar[d]^{k_2} \\ E \ar[r]_h &
F,}\end{array} \right) \quad
 =
\begin{array}{c} \xymatrix@R=4pc@C=4pc{E \ar[r]^{[j_2^* \ j_1^* f]} \ar@{=}[d]
\ar@{}[dr]|{[i_{j_2^*}^v \ \Lambda(\alpha)]} & B
\ar@{=}[d] \\
E \ar[r]|{[j_2^* \ g \ k_1^*]}
\ar@{}[dr]|{[\Lambda(\beta) \ i_{k_1^*}^v ]} \ar@{=}[d]
& B \ar@{=}[d] \\ E \ar[r]_{[h \ k_2^* \ k_1^*]} & B.}
\end{array}
$$
\item \label{def:cofolding:Lambdaidentity} \label{def:cofolding:Lambdaidentityequ}
$\Lambda$ preserves identity squares, that is,
$$\Lambda\left(
\begin{array}{c}
\xymatrix@R=4pc@C=4pc{A \ar@{=}[r] \ar[d]_{j} \ar@{}[dr]|{i_j^h} & A \ar[d]^j \\
B \ar@{=}[r] & B}
\end{array} \right) \quad
=
\begin{array}{c}
\xymatrix@R=4pc@C=4pc{B \ar[r]^{j^*}
\ar@{=}[d] \ar@{}[dr]|{i^v_{j^*}} & A \ar@{=}[d] \\
B \ar[r]_{j^*}  & A.}
\end{array}
$$
\end{enumerate}
\end{defn}

\begin{examp} \label{exa:span_admits_a_cofolding}
The double category $\Sp$ admits a cofolding. The coholonomy is
$$\left( \begin{array}{c} \xymatrix@R=1.5pc{A \ar[d]_j \\ C} \end{array} \right)
\xymatrix{\ar@{|->}[r] &}
\left( C \overset{j}{\leftarrow}  A \overset{1^h_A}{\rightarrow} A
\right)$$ and the cofolding is
$$\left( \begin{array}{c}
\xymatrix{A \ar[d]_j & \ar[l]_{f_0} Y \ar[d]^\alpha \ar[r]^{f_1} & B
\ar[d]^k \\ C & \ar[l]^{g_0} Z \ar[r]_{g_1} & D}
\end{array} \right)
\xymatrix{\ar@{|->}[r] &} \left(
\begin{array}{c}\xymatrix@C=3pc{C \ar@{=}[d] & Y \ar[l]_{j \circ f_0} \ar[r]^{f_1}
  \ar[d]^{(\alpha,f_1)} & B \ar@{=}[d] \\
C & \ar[l]^-{g_0 \circ \pr_1} Z \times_D B \ar[r]_-{\pr_2} &
B}\end{array} \right).$$
\end{examp}

\begin{defn}[Compatibility with cofolding] \label{def:morphismwithcofolding}
Let $\bbC$ and $\bbD$ be double categories with cofolding.
\begin{enumerate}
\item
A double functor $F\co \bbC \rightarrow \bbD$ is
{\it compatible with the cofoldings} if
$$F(j^*)=F(j)^* \qquad \text{ and } \qquad
F(\Lambda^{\bbC}(\alpha))=\Lambda^{\bbD}(F(\alpha))$$
for all vertical morphisms $j$ and squares $\alpha$ in $\bbC$.
\item
Let $F,G\co \bbC \rightarrow \bbD$ be double functors
compatible with the cofoldings.  A horizontal natural
transformation $\theta\co F \Rightarrow G$ is {\it
compatible with the cofoldings} if for all vertical morphisms $j$ in $\bbC$ the
following equation holds.
\begin{equation} \label{equ:2cellwithcofolding:horizontal_compatibility}
\Lambda\left(
\begin{array}{c}
\xymatrix@R=4pc@C=4pc{FA \ar[r]^{\theta A} \ar[d]_{Fj} \ar@{}[dr]|{\theta j} & GA \ar[d]^{Gj} \\
FC \ar[r]_{\theta C} & GC}
\end{array} \right) \quad
=
\begin{array}{c}
\xymatrix@R=4pc@C=4pc{FA \ar[r]^{[ Fj^* \ \theta A ]}
\ar@{=}[d] \ar@{}[dr]|{i^v_{[Fj^* \ \theta A ]}} & GC \ar@{=}[d] \\
FA \ar[r]_{[\theta C \ Gj^* ] } \ar[r] & GC}
\end{array}
\end{equation}
\item
Let $F,G\co \bbC \rightarrow \bbD$ be double functors
compatible with the cofoldings.
A vertical natural transformation $\sigma\co F \Rightarrow G$ is
{\it compatible with the cofoldings} if for all vertical morphisms
$j\colon A \to C$ the following equation holds.
\begin{equation}
\Lambda\left(
\begin{array}{c}
\xymatrix@R=4pc@C=4pc{FC \ar[r]^{F j^*} \ar[d]_{\sigma C}
\ar@{}[dr]|{\sigma j^*} & FA \ar[d]^{\sigma A} \\
GC \ar[r]_{G j^*} & GA}
\end{array} \right) \quad
=
\begin{array}{c}
\xymatrix@R=4pc@C=4pc{FC \ar[r]^{[(\sigma
C)^*  \ Fj^* ]} \ar@{=}[d] \ar@{}[dr]|{i^v_{[ (\sigma
C)^* \ F j^*   ]}} & GA \ar@{=}[d] \\
FC \ar[r]_{[ Gj^* \ (\sigma A)^* ]} \ar[r] & GA}
\end{array}
\end{equation}
\end{enumerate}
\end{defn}

% If two given double functors $F$ and $G$ are compatible with the foldings
% (respectively cofoldings), then one can reduce the question of horizontal
% double adjointness to the question of 2-adjointness for the underlying
% 2-functors $\bfH F$ and $\bfH G$.
We now come to the main result of this section.

\begin{prop}
\label{prop:folding_double_adjunction_iff_horizontal_double_adjunction}
Let $\bbA$ and $\bbX$ be double categories with folding (respectively cofolding)
and consider double functors $F$ and $G$ compatible with the foldings
(respectively cofoldings).
\begin{equation}
\xymatrix@C=4pc{\bbX \ar@/^1pc/[r]^{F} & \ar@/^1pc/[l]^{G} \bbA}
\end{equation}
Then $F$ and $G$ are horizontal double adjoints if and only if their horizontal
2-functors $\bfH F$ and $\bfH G$ are 2-adjoints.
 \end{prop}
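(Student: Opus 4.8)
\emph{Strategy.} The ``only if'' direction is immediate: the canonical $2$-functor $\bfH\co \mathbf{DblCat_h}\to\mathbf{2Cat}$ carries the horizontal double adjunction $F\dashv G$ (a formal adjunction in the $2$-category $\mathbf{DblCat_h}$) to the $2$-adjunction $\bfH F\dashv \bfH G$, since every $2$-functor preserves adjunctions; this direction uses neither the foldings nor the compatibility hypotheses. For the ``if'' direction, suppose $\bfH F\dashv\bfH G$ with unit $\overline\eta$ and counit $\overline\varepsilon$, so that we have horizontal morphisms $\overline\eta_X\co X\to GFX$ in $\bbX$ and $\overline\varepsilon_A\co FGA\to A$ in $\bbA$ satisfying the triangle identities in $\bfH\bbX$ and $\bfH\bbA$. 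The plan is to exhibit the horizontal double adjunction by verifying the hypotheses of characterization~\ref{FGunituniversal} of Theorem~\ref{thm:double_adjunction_descriptions}: keep the given double functors $F$ and $G$, take $\overline\eta$ for the object part of the unit, and use the folding on $\bbX$ to manufacture the square component $\eta_j$ for each vertical morphism $j$ of $\bbX$.

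\emph{Construction of $\eta_j$.} For $j\co X\to X'$ vertical in $\bbX$, $2$-naturality of $\overline\eta$ at the holonomy $\overline j\co X\to X'$, combined with the identity $GF\overline j=\overline{GFj}$ (which holds because $F$ and $G$ are compatible with the foldings), gives the equality $[\,\overline\eta_X\ \ \overline{GFj}\,]=[\,\overline j\ \ \overline\eta_{X'}\,]$ of horizontal morphisms $X\to GFX'$ in $\bbX$. Hence the identity $2$-cell on this common morphism is a legitimate square of $\bbQ\bfH\bbX$ with flat vertical boundary and horizontal boundaries $[\,\overline\eta_X\ \ \overline{GFj}\,]$ and $[\,\overline j\ \ \overline\eta_{X'}\,]$; I define $\eta_j$ to be its image under the inverse of the folding bijection $\Lambda^{\,\overline\eta_X,\,GFj}_{\,j,\,\overline\eta_{X'}}$, so that $\eta_j$ is a square in $\bbX$ with left boundary $j$, right boundary $GFj$, top $\overline\eta_X$, and bottom $\overline\eta_{X'}$.

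\emph{Verifications.} Three things must be checked. (a) The assignment $j\mapsto\eta_j$ is a functor $(\Obj\bbX,\Ver\bbX)\to(\Hor\bbX,\Sq\bbX)$: preservation of vertical identities comes from the axiom that the folding is the identity on identity vertical boundaries, and preservation of vertical composites follows from folding axiom~\ref{def:folding:Lambdavertical} together with $2$-functoriality of the holonomy. (b) $\eta$ is a horizontal natural transformation $1_\bbX\Rightarrow GF$: the three conditions in~\eqref{equ:horizontal_natural_transformation} are equalities of squares with equal boundary, so since $\Lambda^{\bbX}$ is faithful on squares it suffices to verify them after applying $\Lambda^{\bbX}$, where folding axiom~\ref{def:folding:Lambdahorizontal}, the compatibility of $F$ and $G$ with the foldings (which rewrites $\Lambda^{\bbX}(GF\alpha)$ in terms of $\Lambda(\alpha)$), and the fact that the $\Lambda^{\bbX}(\eta_j)$ are identity $2$-cells reduce them to $2$-cell identities in $\bfH\bbX$ that hold because $\overline\eta$ is $2$-natural. (c) Each $\eta_j$ is universal from $j$ to $G$ in the sense of Definition~\ref{def:horizontally_universal_square}, i.e.\ $\beta\mapsto[\,\eta_j\ \ G\beta\,]$ is a bijection $\bbA(Fj,\ell)\to\bbX(j,G\ell)$ for every vertical $\ell$ in $\bbA$. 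Via the foldings on $\bbA$ and $\bbX$ and the compatibility of $F$ and $G$, both sides decompose as disjoint unions of hom-sets of $2$-cells in $\bfH\bbA$ and $\bfH\bbX$ (indexed by the horizontal boundaries $f\co FX\to A$, $g\co FX'\to A'$), and the isomorphism of hom-categories coming from $\bfH F\dashv\bfH G$ matches these: the index sets correspond under transposition, and $G$-functoriality together with $2$-naturality of $\overline\eta$ identify $\widehat{[f\ \overline\ell]}$ with $[\widehat f\ G\overline\ell]$ and $\widehat{[F\overline j\ g]}$ with $[\overline j\ \widehat g]$. One then checks that under these identifications the assembled bijection is exactly $\beta\mapsto[\,\eta_j\ \ G\beta\,]$: applying $\Lambda^{\bbX}$ and using folding axiom~\ref{def:folding:Lambdahorizontal} with $\Lambda^{\bbX}(G\beta)=G(\Lambda^{\bbA}\beta)$ collapses, since $\Lambda^{\bbX}(\eta_j)$ is an identity $2$-cell, to the whiskering of $G(\Lambda^{\bbA}\beta)$ by $\overline\eta_X$, which is precisely the adjunct $2$-cell. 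With (a)--(c) in hand, characterization~\ref{FGunituniversal} of Theorem~\ref{thm:double_adjunction_descriptions} produces the horizontal double adjunction $F\dashv G$.

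\emph{The cofolding case and the main obstacle.} The cofolding case is proved by the same scheme, with $\bbQ$ replaced by $\overline\bbQ$, the holonomy by the coholonomy $j\mapsto j^{*}$, and the folding axioms and compatibility conditions of Definitions~\ref{def:folding} and~\ref{def:morphismwithfolding} by their cofolding analogues in Definitions~\ref{def:cofolding} and~\ref{def:morphismwithcofolding}; the horizontal morphisms that get compared are the ones dictated by~\eqref{equ:2cellwithcofolding:horizontal_compatibility} in place of~\eqref{equ:2cellwithfolding:horizontal_compatibility}, but the logical structure is unchanged. The step I expect to be the main obstacle is verification~(c): matching the bijection assembled from the two folding bijections and the action of the $2$-adjunction on $2$-cells with the explicit formula $\beta\mapsto[\,\eta_j\ \ G\beta\,]$. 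This is a diagram chase that repeatedly invokes the horizontal-composition axiom for the folding, the compatibility of $F$ and $G$ with the foldings, and the triangle identities for $\overline\eta$ and $\overline\varepsilon$; once the translations between squares and $2$-cells are set up carefully it is routine, but it is where all the hypotheses are genuinely used.
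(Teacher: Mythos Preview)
Your proof is correct, but it takes a different route from the paper's. The paper argues via characterization~\ref{local} of Theorem~\ref{thm:double_adjunction_descriptions}: it defines the bijection
\[
\varphi_{j,k}(\alpha):=\bigl(\Lambda^{f^\dag,Gk}_{j,g^\dag}\bigr)^{-1}\,\varphi_{s j,\,t k}\bigl(\Lambda^{f,k}_{Fj,g}(\alpha)\bigr)
\]
directly as ``fold in $\bbA$, transpose the resulting $2$-cell via the $2$-adjunction $\bfH F\dashv\bfH G$, unfold in $\bbX$'', and then checks naturality in $j,k$ and compatibility with vertical composition using the folding axioms~\ref{def:folding:Lambdahorizontal} and~\ref{def:folding:Lambdavertical}. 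This is shorter: each of the three steps is manifestly a bijection, and there is no unit to construct or universality to verify.

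Your approach via characterization~\ref{FGunituniversal} is closely related: your $\eta_j$ is exactly $\varphi_{j,Fj}(i^h_{Fj})$ for the paper's $\varphi$, and your universality verification~(c) reproduces the paper's bijection (the computation you sketch, $\Lambda^{\bbX}\bigl([\eta_j\ G\beta]\bigr)=[i^v_{\overline\eta_X}\ G\Lambda^{\bbA}(\beta)]$, \emph{is} the ``fold--transpose--unfold'' formula). What your route buys is that the unit is constructed explicitly and is automatically compatible with the foldings in the sense of~\eqref{equ:2cellwithfolding:horizontal_compatibility}, since $\Lambda^{\bbX}(\eta_j)$ is an identity $2$-cell by design; this is a remark the paper makes separately after its proof. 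The cost is that you must check horizontal naturality of $\eta$ and then universality, whereas the paper bypasses both by working with $\varphi$ directly.
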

\begin{proof}
If $F$ and $G$ are horizontal double adjoints, then $\bfH F$ and $\bfH G$ are 2-adjoints,
since the 2-functor $\bfH \co \mathbf{DblCat_h} \rightarrow \textbf{2-Cat}$
preserves adjoints, as does any 2-functor.

For the converse, suppose that $F$ and $G$ are compatible with the foldings and
$\varphi_{X,A}\co \bfH \bbA (FX,A) \rightarrow \bfH \bbX (X, GA)$ is a natural
isomorphism of categories.  We use the double adjunction characterization in
Theorem~\ref{thm:double_adjunction_descriptions}~\ref{local}.  For vertical
morphisms $j$ and $k$ in $\bbX$ and $\bbA$ respectively, we define a bijection
$$\xymatrix@1{\varphi_{j,k}\co \bbA(Fj,k) \ar[r] & \bbX(j,Gk)}$$
\begin{equation*}
\varphi_{j,k}(\alpha):=\left(\Lambda^{f^\dag,Gk}_{j,g^\dag} \right)^{-1}
\varphi_{sj, tk} \left( \Lambda^{f,k}_{Fj,g} (\alpha) \right).
\end{equation*}
Here $f^\dag$ and $g^\dag$ are the transposes of the horizontal morphisms $f$
and $g$ with respect to the underlying 1-adjunction.  The naturality of
$\varphi_{X,A}$ guarantees that the boundaries are correct.

The bijection $\varphi_{j,k}$ is compatible with vertical composition for the
following reasons:
\begin{enumerate}
\item \label{prop:folding_double_adjunction_iff_horizontal_double_adjunction:(i)}
$\varphi_{X,A}$ is compatible with the vertical composition of 2-cells in $\bfH \bbX$ and $\bfH\bbA$
\item \label{prop:folding_double_adjunction_iff_horizontal_double_adjunction:(ii)}
the isomorphism $\varphi_{X,A}$ is natural in $X$ and $A$, and
\item \label{prop:folding_double_adjunction_iff_horizontal_double_adjunction:(iii)}
the foldings are compatible with vertical composition as in
Definition~\ref{def:folding}~\ref{def:folding:Lambdavertical}.
\end{enumerate}

The naturality of $\varphi_{j,k}$ in $j$ and $k$ similarly follows
from
\ref{prop:folding_double_adjunction_iff_horizontal_double_adjunction:(i)}
and
\ref{prop:folding_double_adjunction_iff_horizontal_double_adjunction:(ii)}
above, and the compatibility of the foldings with horizontal
composition in
Definition~\ref{def:folding}~\ref{def:folding:Lambdahorizontal}.

These natural bijections $\varphi_{j,k}$ compatible with vertical
composition are equivalent to a unit $\eta$ and counit $\varepsilon$
in a horizontal double adjunction by
Theorem~\ref{thm:double_adjunction_descriptions}~\ref{local}, so we
are finished.

The analogous proof works for the cofolding claim.
\end{proof}

\begin{rmk}
In
Proposition~\ref{prop:folding_double_adjunction_iff_horizontal_double_adjunction},
note that the horizontal natural transformations $\eta$ and $\varepsilon$ which
make $F$ and $G$ into horizontal double adjoints are not required to be
compatible with the foldings, though if $\eta$ and $\varepsilon$ exist, they can
be replaced by horizontal natural transformations compatible with the foldings.
Note also that the holonomy (respectively coholonomy) is not required to be
fully faithful.
\end{rmk}

Proposition~\ref{prop:folding_double_adjunction_iff_horizontal_double_adjunction}
allows us to draw conclusions about horizontal double adjointness when both
double functors $F$ and $G$ are already given, and are compatible
  with the foldings.  It would be useful to have criteria for
  concluding the existence of a horizontal right double adjoint for a
  given double functor $F$ (compatible with foldings) given the
  existence of a right 2-adjoint for $\bfH F$, without referencing $G$
  at the outset.  One criterion that comes to mind is to require the
  holonomy to be fully faithful, but this happens only for double
  categories of direct quintets, as we now proceed to explain.  A subtler
  criterion for a special case of interest will be derived in
  Proposition~\ref{prop:bijection_for_fixed_underlying_morphism}.

\begin{examp}\label{quintets:ff}
  If $\bfK$ is a $2$-category,
  the canonical folding of the double category of direct quintets $\bbQ\bfK$
  of Example~\ref{exa:quintets} has fully faithful holonomy.  Similarly,
  the canonical cofolding on the double category of inverse quintets
  $\Qbar\bfK$ of Example~\ref{exa:twistedquintets} has fully faithful coholonomy.
\end{examp}

\begin{lem}\label{ffhol=>quintet}
  If $\bbD$ is a double category with folding and fully faithful holonomy,
  then the folding $\Lambda\co \bbD \to \bbQ \bfH \bbD$ is an isomorphism
  of double categories.
\end{lem}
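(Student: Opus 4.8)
The plan is to prove that $\Lambda$ is bijective on each of the four kinds of cell of a double category — objects, horizontal morphisms, vertical morphisms, and squares — and then observe that a double functor which is bijective on all four kinds of cell is an isomorphism of double categories: its componentwise inverse automatically preserves all horizontal and vertical composites and units, since $\Lambda$ preserves them and is injective on cells.

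First, by Definition~\ref{def:folding} the folding $\Lambda$ is the identity on the common horizontal $2$-category $\bfH\bbD = \bfH(\bbQ\bfH\bbD)$ (cf.~Example~\ref{exa:quintets}); in particular it is the identity, hence a bijection, on objects and on horizontal morphisms. Second, a vertical morphism of $\bbQ\bfH\bbD$ is by construction a morphism of $\bfH\bbD$, i.e.\ a horizontal morphism of $\bbD$, and $\Lambda$ sends a vertical morphism $j\co A\to C$ of $\bbD$ to the horizontal morphism $\overline j\co A\to C$, i.e.\ to the value of the holonomy on $j$. The hypothesis that the holonomy is fully faithful (together with its being the identity on objects) says precisely that $j\mapsto\overline j$ is an isomorphism between the vertical $1$-category of $\bbD$ and the underlying $1$-category of $\bfH\bbD$; this is exactly bijectivity of $\Lambda$ on vertical morphisms.

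The step requiring the most care — though still essentially bookkeeping — is bijectivity on squares. A square of $\bbQ\bfH\bbD$ has a boundary consisting of horizontal morphisms $f\co A\to B$, $g\co C\to D$ (which $\Lambda$ fixes) together with ``vertical'' morphisms $u\co A\to C$ and $v\co B\to D$ of $\bbQ\bfH\bbD$, i.e.\ horizontal morphisms of $\bbD$; and the square itself is a $2$-cell $[f\ v]\Rightarrow[u\ g]$ in $\bfH\bbD$. By the preceding paragraph there are unique vertical morphisms $j,k$ of $\bbD$ with $\overline j=u$ and $\overline k=v$, so the given square lies in the codomain of the bijection $\Lambda^{f,k}_{j,g}$ of Definition~\ref{def:folding} — this bijection being exactly what ``$\Lambda$ is fully faithful on squares'' means. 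Hence the given square equals $\Lambda\alpha$ for a unique square $\alpha$ of $\bbD$ with boundary $f,j,k,g$. Since every boundary in $\bbQ\bfH\bbD$ arises in this way — again using fully faithful holonomy for the two vertical sides — $\Lambda$ is a bijection on the set of all squares.

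Combining the four bijectivity statements yields that $\Lambda$ is an isomorphism of double categories. I do not anticipate a genuine obstacle here; the only subtlety is that a square of $\bbQ\bfH\bbD$ a priori has arbitrary horizontal morphisms of $\bbD$ as its two vertical sides, and one must recognize these as holonomies of (necessarily unique) vertical morphisms of $\bbD$ before applying the folding bijection $\Lambda^{f,k}_{j,g}$ — this is the second, and slightly less obvious, place where the fully-faithful-holonomy hypothesis enters.
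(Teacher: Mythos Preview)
Your proof is correct and follows exactly the same approach as the paper's own proof, which is simply the one-line observation that $\Lambda$ is the identity on the horizontal $2$-category, fully faithful on the vertical $1$-category, and fully faithful on squares. You have merely spelled out these three ingredients in more detail, including the point that fully faithful holonomy is needed both to identify the vertical sides of an arbitrary square in $\bbQ\bfH\bbD$ with holonomies $\overline j,\overline k$ and then to invoke the folding bijection $\Lambda^{f,k}_{j,g}$.
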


\begin{proof}
  Indeed, $\Lambda$ is the identity on the horizontal 2-category,
  fully faithful on the vertical 1-category, and fully faithful on squares.
\end{proof}

\begin{lem}\label{VtoH}
  If $\bbD$ and $\bbC$ are double categories with fully faithful holonomy, and
  $F$ and $G$ are double functors $\bbD\to\bbC$ compatible with the holonomies,
  then
  the holonomy and
  folding provide a 1-1 correspondence between
  $2$-natural transformations $\bfV F \Rightarrow \bfV G$ and
  2-natural transformations $\bfH F \Rightarrow \bfH G$.
\end{lem}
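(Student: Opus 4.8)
The plan is to reduce to the case of direct quintets by way of Lemma~\ref{ffhol=>quintet}. Write $\mathbf{K}=\bfH\bbD$ and $\mathbf{L}=\bfH\bbC$; by Lemma~\ref{ffhol=>quintet} the foldings are isomorphisms of double categories $\bbD\cong\bbQ\mathbf{K}$ and $\bbC\cong\bbQ\mathbf{L}$ taking the holonomies to the canonical (identity) holonomies of the quintet double categories. Transporting $F$ and $G$ along these isomorphisms, I would first record the routine fact that a double functor $\bbQ\mathbf{K}\to\bbQ\mathbf{L}$ compatible with the canonical foldings is completely determined by its horizontal $2$-functor, so that under these identifications $F$ and $G$ are recovered from the $2$-functors $\bfH F,\bfH G\co\mathbf{K}\to\mathbf{L}$. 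Since the vertical $2$-category of $\bbQ\mathbf{K}$ is $\mathbf{K}$ with its $2$-cells reversed (Example~\ref{exa:quintets}), this identifies $\bfV F$ with $(\bfH F)^{\mathrm{co}}$ and $\bfV G$ with $(\bfH G)^{\mathrm{co}}$ as $2$-functors $\mathbf{K}^{\mathrm{co}}\to\mathbf{L}^{\mathrm{co}}$. The lemma then follows from the elementary observation that, for $2$-functors $\Phi,\Psi\co\mathbf{K}\to\mathbf{L}$, a $2$-natural transformation $\Phi\Rightarrow\Psi$ is precisely the same data subject to the same axioms as a $2$-natural transformation $\Phi^{\mathrm{co}}\Rightarrow\Psi^{\mathrm{co}}$: the components are $1$-cells and the $1$-cell naturality squares are equations of $1$-cells, both unaffected by $(-)^{\mathrm{co}}$, while the $2$-cell naturality (whiskering) condition is an equation of $2$-cells, read in the opposite hom-category in the ``$\mathrm{co}$'' version.

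Unwinding the two isomorphisms, the resulting bijection is exactly the one asserted, and it is implemented by the holonomy and the folding. A $2$-natural transformation $\tau\co\bfH F\Rightarrow\bfH G$ has components $\tau A\co FA\to GA$ that are horizontal morphisms of $\bbC$; since the holonomy of $\bbC$ is fully faithful and the identity on objects there is a unique vertical morphism $\sigma A\co FA\to GA$ with $\overline{\sigma A}=\tau A$, and $\sigma$ is the corresponding transformation $\bfV F\Rightarrow\bfV G$. The $1$-cell naturality of $\sigma$ over a vertical morphism $j$ is equivalent, upon applying the faithful holonomy of $\bbC$ and using that $F$ and $G$ preserve holonomies, to the $1$-cell naturality of $\tau$ over $\overline{j}$; as $j$ ranges over all vertical morphisms of $\bbD$ and the holonomy of $\bbD$ is surjective on horizontal morphisms, this covers all horizontal morphisms, so the two naturality conditions match. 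The equivalence of the $2$-cell naturality conditions comes from the folding bijections on squares, which carry the squares of $\bbD$ with trivial horizontal boundary (the $2$-cells of $\bfV\bbD$) bijectively onto the relevant $2$-cells of $\bfH\bbD$, together with Remark~\ref{rmk:folding_implies_2cell_composition_compatibility}.

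I expect the only real work to be this matching of the whiskering conditions. In the quintet-free formulation it requires showing that, under the folding, a square $\gamma$ of $\bbD$ with trivial horizontal boundary, whiskered against $\sigma A$ and $\sigma C$, corresponds to the $2$-cell of $\bfH\bbD$ determined by $\gamma$, whiskered against $\tau A=\overline{\sigma A}$ and $\tau C=\overline{\sigma C}$; this genuinely uses the folding axioms of Definition~\ref{def:folding}, in particular compatibility with vertical composition (Definition~\ref{def:folding}~\ref{def:folding:Lambdavertical}), and not merely the holonomy. The virtue of passing to quintets is precisely that there this computation degenerates to the ``read the equation backwards'' observation above, so I would organize the proof around the quintet reduction and keep the quintet-free bookkeeping implicit.
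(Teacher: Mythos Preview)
Your proposal is correct. The paper's own proof is a single sentence pointing to Remark~\ref{rmk:folding_implies_2cell_composition_compatibility}; your direct unwinding in the second paragraph is essentially an elaboration of that hint, with the matching of components via the bijective holonomy and the matching of the $2$-cell naturality via the folding made explicit.

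Your quintet reduction via Lemma~\ref{ffhol=>quintet} is a genuinely different and cleaner route than the paper takes: once $F$ and $G$ are transported to double functors $\bbQ\bfK\to\bbQ\bfL$, the statement collapses to the elementary observation that $2$-natural transformations are invariant under $(-)^{\mathrm{co}}$, and no direct manipulation of the folding axioms is needed. What this buys is conceptual clarity; what the paper's terse approach buys is that one sees exactly which folding axiom does the work (compatibility with horizontal composition of vertical $2$-cells). One small caveat worth flagging: identifying $\bfV\tilde F$ with $(\bfH F)^{\mathrm{co}}$ on $2$-cells requires $F$ to be compatible with the full folding, not merely the holonomy as the lemma's hypothesis literally reads. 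The paper's own argument via Remark~\ref{rmk:folding_implies_2cell_composition_compatibility} has the same implicit requirement, and the lemma is only ever invoked (in Proposition~\ref{prop:H_and_V_fully_faithful} and Corollary~\ref{cor:fully_faithful_holonomy_implies_all_adjunctions_equivalent}) for functors that are compatible with the foldings, so this is not a defect of your approach relative to the paper.
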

\begin{proof}
  This is a consequence of the compatibility with horizontal
  composition of 2-cells in the vertical 2-category,
  cf.~Remark~\ref{rmk:folding_implies_2cell_composition_compatibility}.
\end{proof}

  In fact, we can refine Lemma~\ref{ffhol=>quintet} to an equivalence of
  $2$-categories.  Let $\mathbf{DblCatFoldHol_h}$ denote the 2-category of small double
  categories with folding and fully faithful holonomy, double functors
  compatible with foldings, and horizontal natural transformations
  compatible with folding (see Definitions~\ref{def:folding} and
  \ref{def:morphismwithfolding}).  Let $\mathbf{DblCatFoldHol_v}$ denote
  the 2-category of small double categories with folding and fully
  faithful holonomy, double functors compatible with foldings, and
  vertical natural transformations compatible with folding.

\begin{prop} \label{prop:H_and_V_fully_faithful}
The forgetful 2-functors
  \begin{align*} %\label{equ:prop:H_and_V_fully_faithful:H}
  &\xymatrix{\bfH \co \mathbf{DblCatFoldHol_h} \ar[r] & \mathbf{2Cat}}
\\  %\label{equ:prop:H_and_V_fully_faithful:V}
&\xymatrix{\bfV \co \mathbf{DblCatFoldHol_v} \ar[r] & \mathbf{2Cat}}
  \end{align*}
  are equivalences of $2$-categories.
\end{prop}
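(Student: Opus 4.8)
The strategy is to produce, for each of the two forgetful $2$-functors, an explicit quasi-inverse built from the quintet construction, and then to recognize that the hypothesis ``compatible with folding'' on morphisms and $2$-morphisms is exactly what makes the two composites the identity, respectively isomorphic to the identity. I treat $\bfH$ first. By Lemma~\ref{ffhol=>quintet}, the folding of any object $\bbD$ of $\mathbf{DblCatFoldHol_h}$ is an isomorphism $\Lambda^{\bbD}\co\bbD\xrightarrow{\sim}\bbQ\bfH\bbD$, so the quintet construction of Example~\ref{exa:quintets} should give a quasi-inverse. First I would check that $\bbQ$ is a $2$-functor $\mathbf{2Cat}\to\mathbf{DblCatFoldHol_h}$: on a $2$-category $\bfK$ it produces $\bbQ\bfK$, which lies in $\mathbf{DblCatFoldHol_h}$ by Example~\ref{quintets:ff} (its canonical folding being the identity double functor $\bbQ\bfK\to\bbQ\bfH\bbQ\bfK=\bbQ\bfK$); on a $2$-functor $K\co\bfA\to\bfB$ it produces the evident double functor $\bbQ K$, compatible with the foldings since those are identities; and on a $2$-natural transformation $\tau\co K\Rightarrow L$ it produces the horizontal natural transformation $\bbQ\tau$ with object-component $\tau_A$ at $A$ and, at a vertical morphism $j$, the identity quintet square forced by the $2$-naturality equation $Lj\circ\tau_A=\tau_C\circ Kj$; one checks $\bbQ\tau$ is again compatible with folding. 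Functoriality of $\bbQ$ is immediate from the description of $\bbQ\bfK$, and one has $\bfH\circ\bbQ=\mathrm{id}_{\mathbf{2Cat}}$ on the nose, since $\bfH\bbQ\bfK=\bfK$ and $\bbQ$ does not alter the horizontal data.

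The heart of the argument is that the folding isomorphisms $\Lambda^{\bbD}\co\bbD\xrightarrow{\sim}\bbQ\bfH\bbD$ assemble into a $2$-natural isomorphism $\mathrm{id}_{\mathbf{DblCatFoldHol_h}}\Rightarrow\bbQ\circ\bfH$. Indeed, the two equations defining ``compatible with folding'' for a double functor $F\co\bbC\to\bbD$ (Definition~\ref{def:morphismwithfolding}) say exactly that $\Lambda^{\bbD}\circ F=\bbQ(\bfH F)\circ\Lambda^{\bbC}$, i.e.\ they \emph{are} the naturality squares for $\Lambda$; likewise, equation~\eqref{equ:2cellwithfolding:horizontal_compatibility} defining ``compatible with folding'' for a horizontal natural transformation $\theta$ is precisely the $2$-dimensional naturality of $\Lambda$ at $\theta$. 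Hence $\bbQ$ is a quasi-inverse to $\bfH$, so $\bfH$ is an equivalence of $2$-categories.

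For $\bfV$ I would run the same argument with $\bbQ(\bfK^{\mathrm{co}})$ --- the double category of direct quintets of $\bfK$ with its $2$-cells reversed --- in place of $\bbQ\bfK$: since the vertical $2$-category of $\bbQ\bfL$ is $\bfL$ with $2$-cells reversed (Example~\ref{exa:quintets}), this gives $\bfV(\bbQ(\bfK^{\mathrm{co}}))=\bfK$, while Lemma~\ref{ffhol=>quintet} still identifies any object of $\mathbf{DblCatFoldHol_v}$ with such a quintet double category. The one genuinely new ingredient is the passage between vertical natural transformations compatible with folding and $2$-natural transformations of vertical $2$-functors; this is supplied by Lemma~\ref{VtoH}, with Remark~\ref{rmk:folding_implies_2cell_composition_compatibility} ensuring that the vertical $2$-cell composition in $\bfV\bbD$ matches the corresponding composition in $\bfH\bbD$ under holonomy and folding, so that the assignment is functorial. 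Everything else transfers.

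I expect the main obstacle to be the bookkeeping hidden in the second step: verifying that a $2$-natural transformation $\tau\co\bfH F\Rightarrow\bfH G$ lifts to an honest horizontal natural transformation of double functors compatible with folding, and uniquely so. Setting $\theta A:=\tau_A$, one is forced to define the square at a vertical morphism $j$ as $(\Lambda^{\,\theta A,\,Gj}_{\,Fj,\,\theta C})^{-1}$ applied to the vertical identity square on $[\theta A\ \overline{Gj}]$; one must check this has the correct boundary --- which uses the $1$-naturality of $\tau$ at $\overline{j}$ together with $F$ and $G$ being compatible with the holonomies --- and that the resulting family of squares obeys the axioms~\eqref{equ:horizontal_natural_transformation} of a horizontal natural transformation, which is where the folding axioms of Definition~\ref{def:folding} (preservation of horizontal composition, of vertical composition, and of identity squares) get consumed. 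Uniqueness of the lift is then automatic, since a folding is fully faithful on squares and $\theta$ is thereby determined by its object-components. The $\bfV$-analogue carries the additional mild annoyance of tracking the ``$2$-cells reversed'' twist, but presents no new difficulty once Lemma~\ref{VtoH} is invoked.
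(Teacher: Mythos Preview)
Your proposal is correct but takes a genuinely different organizational route from the paper. The paper establishes the equivalence by checking directly that $\bfH$ and $\bfV$ are essentially surjective (citing the quintet examples) and $2$-fully faithful: bijectivity on double functors is read off from the fully faithful holonomy and the folding bijections, and fullness on $2$-cells is obtained by explicitly lifting a $2$-natural transformation $\theta\co\bfH F\Rightarrow\bfH G$ to a horizontal natural transformation---defining $\theta_j$ via equation~\eqref{equ:2cellwithfolding:horizontal_compatibility} and then verifying double naturality by hand using the horizontal-composition axiom of the folding. Your approach instead exhibits an explicit quasi-inverse $\bbQ$ and makes the nice observation that the two compatibility conditions in Definition~\ref{def:morphismwithfolding} are \emph{literally} the $1$- and $2$-dimensional naturality equations for the family $(\Lambda^{\bbD})_{\bbD}$ to be a $2$-natural isomorphism $\mathrm{id}\Rightarrow\bbQ\circ\bfH$. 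This buys a more conceptual explanation of why those compatibility conditions are the right ones; the paper's approach is more pedestrian but has the advantage that the same explicit lifting template is reused for the cofolding analogue in Proposition~\ref{prop:H_and_V_fully_faithful_cofolding_case}.

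One comment: your final paragraph, on lifting $\tau\co\bfH F\Rightarrow\bfH G$ uniquely to a horizontal natural transformation, is in fact the paper's argument for fullness on $2$-cells rather than something your own argument needs---once you know $\Lambda$ is a $2$-natural isomorphism and $\bfH\bbQ=\mathrm{id}$, you are done, with no separate lifting required. (Of course the two statements are interderivable, since $2$-naturality of $\Lambda$ at $\theta$ is equivalent to $\theta=(\Lambda^{\bbD})^{-1}\cdot\bbQ(\bfH\theta)\cdot\Lambda^{\bbC}$.) For $\bfV$, your sketch via $\bbQ((-)^{\mathrm{co}})$ is correct in outline but terse; the paper instead repeats the direct lifting method, defining $\sigma\overline{j}$ via equation~\eqref{equ:2cellwithfolding:vertical_compatibility} and invoking Lemma~\ref{VtoH}.
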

\begin{proof}
    Note first that $\bfH$ and $\bfV$ are essentially surjective by
    Examples~\ref{exa:quintets} and \ref{quintets:ff}.
Suppose $F,G \co \bbC \rightarrow \bbD$ are double functors compatible with
foldings, and in particular compatible with the fully faithful holonomy, and
suppose $\bfH F =\bfH G$.  Then the double functors $F$ and $G$ agree on the
horizontal 2-categories.  If $j$ is a vertical morphism in $\bbC$, then
$\overline{F(j)}=F(\overline{j})=G(\overline{j})=\overline{G(j)}$, and
$F(j)=G(j)$ by the faithfulness of the holonomy.  The double functors $F$ and
$G$ similarly agree on squares because of the folding bijections.  Conversely,
if a 2-functor is defined on horizontal 2-categories, then it can be extended to
the double categories using the bijective holonomy and then the foldings.  Thus
$\bfH : \mathbf{DblCatFoldHol_h} \to \mathbf{2Cat}$ is bijective on the objects
of hom-categories.  Similarly, $\bfV$ is bijective on the objects of
hom-categories (here the fullness of the holonomy plays a role).

Similar arguments hold for injectivity on horizontal respectively vertical
natural transformations.

For fullness of $\bfH$ for 2-natural transformations, suppose $\theta \co \bfH F
\Rightarrow \bfH G$ is a 2-natural transformation.  We extend $\theta$ to a
horizontal natural transformation: for a vertical morphism $j$ in $\bbC$, define
$\theta j$ by equation \eqref{equ:2cellwithfolding:horizontal_compatibility}.
We verify double naturality for $\theta$, namely the equation $\left[\; F\alpha
\;\; \theta k \; \right] = \left[\; \theta j \;\; G \alpha \; \right]$ for any
square $\alpha$ in $\bbC$ with boundary as in equation \eqref{equ:square_alpha}.
By the definition of $\theta j$ and $\theta k$ via equation
\eqref{equ:2cellwithfolding:horizontal_compatibility}, we have $\Lambda(\theta
j)=i^v_{\left[\; \theta A \;\; G \overline{j} \; \right]}$ and $\Lambda(\theta
k)=i^v_{\left[\; \theta B \;\; G \overline{k} \; \right]}$, so that the equation
\begin{equation} \label{equ:prop:H_and_V_fully_faithful:H_surjective_on_2cells}
\begin{array}{c}
\begin{bmatrix}
i^v_{Ff} & \Lambda(\theta k) \\
\Lambda(F\alpha) & i^v_{\theta D}
\end{bmatrix}
\end{array}
=
\begin{array}{c}
\begin{bmatrix}
i^v_{\theta A} & \Lambda(G\alpha) \\
\Lambda(\theta j) & i^v_{G g}
\end{bmatrix}
\end{array}
\end{equation}
holds by 2-naturality of $\theta$. The double naturality then follows from
an application of $\Lambda^{-1}$ to
\eqref{equ:prop:H_and_V_fully_faithful:H_surjective_on_2cells} using
axiom~\ref{def:folding:Lambdahorizontal} of Definition~\ref{def:folding}.

For fullness of $\bfV$ on 2-natural transformations, suppose $\sigma \co \bfV F
\Rightarrow \bfV G$ is a 2-natural transformation.  We extend $\sigma$ to a
vertical natural transformation: for any horizontal morphism $\overline{j}$ in
$\bbC$, define $\sigma \overline{j}$ by equation
\eqref{equ:2cellwithfolding:vertical_compatibility}.  Recall that the holonomy
is fully faithful, so any horizontal morphism is of the form $\overline{j}$ for
a unique vertical morphism $j$.  The proof for surjectivity of $\bfV$ on
2-natural transformations proceeds like that of $\bfH$, using Lemma~\ref{VtoH}.
\end{proof}

\begin{cor} \label{cor:fully_faithful_holonomy_implies_all_adjunctions_equivalent}
Let $\bbA$ and $\bbX$ be double categories with folding and fully
faithful holonomies. Let $F \co \bbX \rightarrow \bbA$
be a double functor compatible with the foldings. Then the following
are equivalent.
\begin{enumerate}
\item \label{cor:fully_faithful_holonomy_implies_all_adjunctions_equivalent:horizontal_double}
The double functor $F$ admits a horizontal right double adjoint (not necessarily
compatible with the foldings).
\item \label{cor:fully_faithful_holonomy_implies_all_adjunctions_equivalent:horizontal_2}
The 2-functor $\bfH F \co \bfH \bbX \rightarrow \bfH \bbA$ admits a right 2-adjoint.
\item \label{cor:fully_faithful_holonomy_implies_all_adjunctions_equivalent:vertical_double}
The double functor $F$ admits a vertical right double adjoint
(not necessarily compatible with the foldings).
\item \label{cor:fully_faithful_holonomy_implies_all_adjunctions_equivalent:vertical_2}
The 2-functor $\bfV F \co \bfV \bbX \rightarrow \bfV \bbA$ admits a right 2-adjoint.
\end{enumerate}
\end{cor}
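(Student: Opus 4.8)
The plan is to prove the three equivalences (i)$\Leftrightarrow$(ii), (iii)$\Leftrightarrow$(iv), and (ii)$\Leftrightarrow$(iv), relying on Proposition~\ref{prop:H_and_V_fully_faithful} (that $\bfH$ and $\bfV$ restrict to equivalences of $2$-categories $\mathbf{DblCatFoldHol_h}\to\mathbf{2Cat}$ and $\mathbf{DblCatFoldHol_v}\to\mathbf{2Cat}$), on Lemma~\ref{ffhol=>quintet}, and on the description of quintet double categories in Example~\ref{exa:quintets}.

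For (i)$\Rightarrow$(ii) there is nothing to do: the $2$-functor $\bfH\co\mathbf{DblCat_h}\to\mathbf{2Cat}$ preserves adjoints, as does any $2$-functor. For (ii)$\Rightarrow$(i), note that $\bbX$, $\bbA$ are objects and $F$ a $1$-morphism of $\mathbf{DblCatFoldHol_h}$, on which $\bfH$ restricts to an equivalence of $2$-categories by Proposition~\ref{prop:H_and_V_fully_faithful}; and an equivalence of $2$-categories reflects adjoints --- one transports a right adjoint of $\bfH F$ back along essential surjectivity to a $1$-morphism $G$ of $\mathbf{DblCatFoldHol_h}$, transports the unit and counit along the local equivalences of hom-categories, and checks that the triangle identities survive. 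This produces a horizontal right double adjoint $G$ of $F$ (indeed one compatible with the foldings), giving (i). The equivalence (iii)$\Leftrightarrow$(iv) follows the same script with $\bfV$, $\mathbf{DblCat_v}$, $\mathbf{DblCatFoldHol_v}$ in place of $\bfH$, $\mathbf{DblCat_h}$, $\mathbf{DblCatFoldHol_h}$.

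What remains is the link (ii)$\Leftrightarrow$(iv). By Lemma~\ref{ffhol=>quintet} the foldings identify $\bbX$ with $\bbQ\bfH\bbX$ and $\bbA$ with $\bbQ\bfH\bbA$, and since $\bfH$ is bijective on the $1$-morphisms of $\mathbf{DblCatFoldHol_h}$ (Proposition~\ref{prop:H_and_V_fully_faithful}), under these identifications $F$ becomes $\bbQ\phi$ with $\phi:=\bfH F$. Example~\ref{exa:quintets} gives $\bfH\bbQ\bfK=\bfK$ and tells us $\bfV\bbQ\bfK$ is $\bfK$ with its $2$-cells reversed, which I write $\bfK^{co}$; hence $\bfH F=\phi$ and $\bfV F=\phi^{co}\co(\bfH\bbX)^{co}\to(\bfH\bbA)^{co}$, where $\phi^{co}$ is $\phi$ viewed as a $2$-functor between the $2$-categories with reversed $2$-cells. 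So (ii)$\Leftrightarrow$(iv) reduces to: a $2$-functor $\phi$ has a right $2$-adjoint if and only if $\phi^{co}$ does. This holds because $(-)^{co}\co\mathbf{2Cat}\to\mathbf{2Cat}$ is itself a $2$-functor --- covariant on both $2$-functors and $2$-natural transformations and strictly compatible with all compositions, in fact an isomorphism of $2$-categories with pseudo-inverse supplied by $(\bfK^{co})^{co}\cong\bfK$ --- so, like any $2$-functor, it preserves adjoints: $\phi\dashv\psi$ forces $\phi^{co}\dashv\psi^{co}$, and applying $(-)^{co}$ once more yields the converse.

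I expect the subtlest point to be exactly this last one: although $\bfH F$ and $\bfV F$ feel interchangeable, they really are $\phi$ and $\phi^{co}$, and the argument turns on the easily overlooked observation that reversing $2$-cells is an honest $2$-functor on $\mathbf{2Cat}$ and hence neutral for the existence of adjoints. The other steps needing care are the transport argument behind ``equivalences of $2$-categories reflect adjunctions'' (used in (ii)$\Rightarrow$(i) and (iv)$\Rightarrow$(iii)) and, feeding into (ii)$\Leftrightarrow$(iv), the bookkeeping that the identification of $\bbX$ with $\bbQ\bfH\bbX$ really sends $F$ to $\bbQ(\bfH F)$ and therefore $\bfV F$ to $(\bfH F)^{co}$; this last uses that $F$ is compatible with the (co)holonomy, so that $F$ acts on a vertical morphism exactly as $\bfH F$ acts on its holonomy image.
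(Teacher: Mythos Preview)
Your proof is correct and follows the same overall outline as the paper: both establish (i)$\Leftrightarrow$(ii) and (iii)$\Leftrightarrow$(iv) via Proposition~\ref{prop:H_and_V_fully_faithful} (the $2$-functors $\bfH$ and $\bfV$ being equivalences on $\mathbf{DblCatFoldHol}$, hence reflecting adjoints), and then link (ii) with (iv).

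The one point of divergence is in the bridge (ii)$\Leftrightarrow$(iv). The paper invokes Lemma~\ref{VtoH} directly, which asserts a bijection between $2$-natural transformations $\bfV F_1 \Rightarrow \bfV F_2$ and $\bfH F_1 \Rightarrow \bfH F_2$. You instead pass through Lemma~\ref{ffhol=>quintet} to identify $\bbX\cong\bbQ\bfH\bbX$ and $\bbA\cong\bbQ\bfH\bbA$, read off from Example~\ref{exa:quintets} that $\bfV\bbQ\bfK=\bfK^{\mathrm{co}}$, and conclude by observing that $(-)^{\mathrm{co}}$ is an involutive $2$-functor on $\mathbf{2Cat}$ and therefore preserves adjunctions. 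These are two packagings of the same underlying fact; your route is arguably more transparent, since it makes explicit \emph{why} the bijection of Lemma~\ref{VtoH} exists (namely, $\bfV F$ is literally $(\bfH F)^{\mathrm{co}}$ once one passes to quintets) rather than appealing to it as a black box. The paper's route is shorter on the page because the work has been offloaded to Lemma~\ref{VtoH}. Your verification that $F$ corresponds to $\bbQ(\bfH F)$ under the folding isomorphisms, using the compatibility of $F$ with the holonomy, is exactly the bookkeeping needed and is correctly identified.
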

\begin{proof}
By Proposition~\ref{prop:H_and_V_fully_faithful}, the 2-functor
$\bfH : \mathbf{DblCatFoldHol_h} \to \mathbf{2Cat}$ is 2-fully
faithful, so $F$ admits a horizontal right double adjoint compatible
with the foldings if and only if $\bfH F$ admits a right 2-adjoint.
But if $F$ admits a horizontal right double adjoint $G$ not
necessarily compatible with the foldings, then $\bfH G$ is still a
right 2-adjoint to $\bfH F$, and
Proposition~\ref{prop:H_and_V_fully_faithful} applies to extend the
2-adjunction $\bfH F \dashv \bfH G$ to a horizontal double
adjunction with horizontal left double adjoint $F$.  Thus
\ref{cor:fully_faithful_holonomy_implies_all_adjunctions_equivalent:horizontal_double}$\Leftrightarrow$\ref{cor:fully_faithful_holonomy_implies_all_adjunctions_equivalent:horizontal_2}
and similarly
\ref{cor:fully_faithful_holonomy_implies_all_adjunctions_equivalent:vertical_double}$\Leftrightarrow$\ref{cor:fully_faithful_holonomy_implies_all_adjunctions_equivalent:vertical_2}.

To complete the proof, we observe
\ref{cor:fully_faithful_holonomy_implies_all_adjunctions_equivalent:horizontal_2}$\Leftrightarrow$\ref{cor:fully_faithful_holonomy_implies_all_adjunctions_equivalent:vertical_2},
because the fully faithful holonomy and folding provide a 1-1 correspondence
between 2-natural transformations $\bfV F_1 \Rightarrow \bfV F_2$ and 2-natural
transformations $\bfH F_1 \Rightarrow \bfH F_2$, by Lemma~\ref{VtoH}.
\end{proof}

For completeness, we also state the analogues of Lemma~\ref{ffhol=>quintet},
Proposition~\ref{prop:H_and_V_fully_faithful} and
Corollary~\ref{cor:fully_faithful_holonomy_implies_all_adjunctions_equivalent}
for double categories with cofoldings and fully faithful
coholonomies.

\begin{lem}\label{ffcoh=>quintet}
  If $\bbD$ is a double category with cofolding and fully faithful coholonomy,
  then the cofolding $\Lambda\co \bbD \to \overline{\bbQ} \bfH \bbD$ is an isomorphism
  of double categories.
\end{lem}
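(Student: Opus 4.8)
The plan is to dualize the (one-line) proof of Lemma~\ref{ffhol=>quintet}. By Definition~\ref{def:cofolding}, a cofolding is by construction a double functor $\Lambda\co \bbD \to \overline{\bbQ}\bfH\bbD$ that already restricts to the identity on the horizontal $2$-category of $\bbD$ and is fully faithful on squares; so the only thing left to establish is that $\Lambda$ is, in addition, bijective on objects, on horizontal morphisms, and on vertical morphisms. Once all four of these bijectivity statements are in hand, the inverse assignments automatically assemble into a double functor (they preserve all domains, codomains, compositions, and units, being componentwise inverses of maps that do), so $\Lambda$ is an isomorphism of double categories.

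First I would dispatch the horizontal data: since $\Lambda$ restricts to the identity on $\bfH\bbD$, it is the identity on objects and on horizontal morphisms, hence bijective there. Next, the action of $\Lambda$ on vertical morphisms is exactly the coholonomy $j\mapsto j^*$, read into the vertical $1$-category of $\overline{\bbQ}\bfH\bbD$, which (recall Example~\ref{exa:twistedquintets}) is the opposite of the underlying $1$-category of $\bfH\bbD$. The coholonomy is the identity on objects and is assumed fully faithful, so it is a bijection on each vertical hom-set; thus $\Lambda$ is bijective on vertical morphisms. Finally, $\Lambda$ is fully faithful on squares by the definition of cofolding, and since its effect on the boundary of a square is governed exactly by the bijections just established together with the coholonomy, it follows that $\Lambda$ is a bijection on the whole set of squares. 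Therefore $\Lambda$ is bijective on objects, horizontal morphisms, vertical morphisms, and squares.

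The only point requiring (very mild) care is that the square-level inverse, which a priori is defined hom-set by hom-set via the bijections $\Lambda^{f,k}_{j,g}$, is compatible with horizontal composition, vertical composition, and identity squares. But this is immediate from the remaining axioms in Definition~\ref{def:cofolding}, which state precisely that $\Lambda$ transports the horizontal and vertical compositions and the identity squares of $\bbD$ to those of $\overline{\bbQ}\bfH\bbD$; applying $\Lambda^{-1}$ to those equations yields the corresponding statements for the inverse. I do not expect any genuine obstacle here: the content of the lemma is entirely contained in unwinding what ``cofolding with fully faithful coholonomy'' already asserts.
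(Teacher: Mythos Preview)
Your proof is correct and follows exactly the approach the paper intends: the paper omits the proof of this lemma entirely, relying on the evident duality with Lemma~\ref{ffhol=>quintet}, whose one-line proof you have faithfully dualized and spelled out in detail. The extra paragraph verifying that the inverse is a double functor is not strictly needed (a bijective double functor automatically has a double-functor inverse), but it does no harm.
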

With self-explanatory notation as in
Proposition~\ref{prop:H_and_V_fully_faithful}, we have:
\begin{prop} \label{prop:H_and_V_fully_faithful_cofolding_case}
The forgetful 2-functors
  \begin{align*}
     & \xymatrix{\bfH \co \mathbf{DblCatCofoldCohol_h} \ar[r] &
  \mathbf{2Cat}}\\
     & \xymatrix{\bfV \co \mathbf{DblCatCofoldCohol_v}^{\rm co} \ar[r] &
\mathbf{2Cat}}
  \end{align*}
  are equivalences of $2$-categories.
\end{prop}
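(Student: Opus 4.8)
The plan is to run the proof of Proposition~\ref{prop:H_and_V_fully_faithful} verbatim in the cofolding setting, replacing \emph{folding} by \emph{cofolding}, the holonomy $j\mapsto\overline{j}$ by the coholonomy $j\mapsto j^\ast$, the double category of direct quintets $\bbQ\bfH\bbD$ by that of inverse quintets $\overline{\bbQ}\bfH\bbD$, Lemma~\ref{ffhol=>quintet} by Lemma~\ref{ffcoh=>quintet}, and the folding-compatibility equations of Definition~\ref{def:morphismwithfolding} by those of Definition~\ref{def:morphismwithcofolding}. Essential surjectivity of $\bfH$ follows from Examples~\ref{exa:twistedquintets} and~\ref{quintets:ff}: every small $2$-category $\bfK$ is $\bfH\overline{\bbQ}\bfK$, and $\overline{\bbQ}\bfK$ carries a cofolding with fully faithful coholonomy. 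For $\bfV$ one argues similarly, but the vertical $2$-category of $\overline{\bbQ}\bfK$ involves the opposite of the underlying $1$-category of $\bfK$ (since the coholonomy has source $(\bfV\bbD)_0^{\mathrm{op}}$) together with a reversal of $2$-cells; this is exactly why the source $2$-category is taken with a ``co'' in the second statement, so that $\bfV\colon\mathbf{DblCatCofoldCohol_v}^{\rm co}\to\mathbf{2Cat}$ is genuinely a covariant $2$-functor and is essentially surjective.

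Next I would show that each forgetful $2$-functor is bijective on the objects of hom-categories. By Lemma~\ref{ffcoh=>quintet} the cofolding exhibits an isomorphism $\bbD\cong\overline{\bbQ}\bfH\bbD$, so a double functor compatible with cofoldings is completely determined by, and freely induced by, its effect on horizontal $2$-categories: if $F,G\colon\bbC\to\bbD$ are compatible with cofoldings and $\bfH F=\bfH G$, then for a vertical morphism $j$ one has $(Fj)^\ast=F(j^\ast)=G(j^\ast)=(Gj)^\ast$, whence $Fj=Gj$ by faithfulness of the coholonomy, and $F=G$ on squares by the cofolding bijections; conversely a $2$-functor defined on horizontal $2$-categories extends uniquely along the bijective coholonomy and then the cofolding bijections, fullness of the coholonomy being what permits the extension to the vertical $1$-categories. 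The same passage gives injectivity of $\bfH$ and $\bfV$ on natural transformations, since the component of a cofolding-compatible (horizontal resp.\ vertical) natural transformation at a vertical morphism $j$ is forced by \eqref{equ:2cellwithcofolding:horizontal_compatibility} resp.\ by the vertical-compatibility equation of Definition~\ref{def:morphismwithcofolding}.

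For fullness on $2$-cells: given a $2$-natural transformation $\theta\colon\bfH F\Rightarrow\bfH G$, define $\theta j$ by \eqref{equ:2cellwithcofolding:horizontal_compatibility} and verify double naturality $\left[F\alpha\;\;\theta k\right]=\left[\theta j\;\;G\alpha\right]$ for every square $\alpha$ of $\bbC$ by applying $\Lambda^{-1}$ to the identity of cofolded squares that expresses $2$-naturality of $\theta$, exactly as in Proposition~\ref{prop:H_and_V_fully_faithful} but invoking axiom~\ref{def:cofolding:Lambdahorizontal} of Definition~\ref{def:cofolding} in place of its folding counterpart. For $\bfV$, given $\sigma\colon\bfV F\Rightarrow\bfV G$, every horizontal morphism of $\bbC$ is $j^\ast$ for a unique vertical $j$ (fully faithful coholonomy), so one defines $\sigma j^\ast$ by the vertical-compatibility equation and checks vertical double naturality using the cofolding analogues of Remark~\ref{rmk:folding_implies_2cell_composition_compatibility} and Lemma~\ref{VtoH} (the bijection between $2$-natural transformations $\bfV F_1\Rightarrow\bfV F_2$ and $\bfH F_1\Rightarrow\bfH F_2$ induced by a fully faithful coholonomy), which are proved exactly as their folding versions from axioms~\ref{def:cofolding:Lambdahorizontal} and~\ref{def:cofolding:Lambdavertical} of Definition~\ref{def:cofolding}.

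The only genuine obstacle is bookkeeping of variances: the coholonomy is contravariant on vertical $1$-morphisms and the squares of inverse quintets are ``twisted'' $2$-cells, so one must be careful that each cofolding axiom is invoked in the slot dual to where the corresponding folding axiom was used, and that the ``co'' on $\mathbf{DblCatCofoldCohol_v}^{\rm co}$ is inserted precisely so that $\bfV$ comes out covariant. Once this is pinned down, no step requires any idea not already present in the proof of Proposition~\ref{prop:H_and_V_fully_faithful}.
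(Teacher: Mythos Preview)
Your proposal is correct and takes essentially the same approach as the paper: the paper's proof simply states that everything is ``very similar to that of Proposition~\ref{prop:H_and_V_fully_faithful}'' and then spells out only the one place where the bookkeeping differs, namely the explicit $2\times 2$ array of cofolded squares (with $\Lambda(F\alpha)$, $\Lambda(\theta k)$, $\Lambda(\theta j)$, $\Lambda(G\alpha)$ in the appropriate slots) whose equality encodes $2$-naturality of $\theta$ and to which one applies $\Lambda^{-1}$ via axiom~\ref{def:cofolding:Lambdahorizontal} of Definition~\ref{def:cofolding}. Your account is more detailed than the paper's but follows the same route throughout.
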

\noindent
The reversal of $2$-cells by $\bfV$ (indicated with the superscript ${}^{\rm
co}$) stems from the contravariant nature of the cofolding.

\begin{proof}
The entire proof is very similar to that of
Proposition~\ref{prop:H_and_V_fully_faithful}.  The only small difference is in
the fullness of $\bfH$ and $\bfV$ for 2-natural
transformations.  Suppose $\theta \co \bfH F \Rightarrow \bfH G$ is a 2-natural
transformation.  We extend $\theta$ to a horizontal natural transformation: for
a vertical morphism $j$ in $\bbC$, define $\theta j$ by equation
\eqref{equ:2cellwithcofolding:horizontal_compatibility}.  By the definition of
$\theta j$ and $\theta k$ via equation
\eqref{equ:2cellwithcofolding:horizontal_compatibility}, we have $\Lambda(\theta
j)=i^v_{\left[\; Fj^* \;\; \theta A \; \right]}$ and $\Lambda(\theta
k)=i^v_{\left[\; Fk^* \;\; \theta B \; \right]}$, so that the equation
\begin{equation} \label{equ:prop:H_and_V_fully_faithful:H_surjective_on_2cells:cofolding_case}
\begin{array}{c}
\begin{bmatrix}
\Lambda(F\alpha) & i^v_{\theta B} \\ i^v_{Fg} & \Lambda(\theta k)
\end{bmatrix}
\end{array}
=
\begin{array}{c}
\begin{bmatrix}
\Lambda(\theta j) & i^v_{Gf} \\ i^v_{\theta C} & \Lambda(G\alpha)
\end{bmatrix}
\end{array}
\end{equation}
holds by 2-naturality of $\theta$.  The double naturality equation $\left[\;
F\alpha \;\; \theta k \; \right] = \left[\; \theta j \;\; G \alpha \; \right]$
for $\theta$ then follows from an application of $\Lambda^{-1}$ to
\eqref{equ:prop:H_and_V_fully_faithful:H_surjective_on_2cells:cofolding_case}
using axiom~\ref{def:cofolding:Lambdahorizontal} of
Definition~\ref{def:cofolding}.
\end{proof}

The contravariant nature of the cofolding also affects
the direction of the vertical adjunction in the following
cofolding analog of
Corollary~\ref{cor:fully_faithful_holonomy_implies_all_adjunctions_equivalent}:
%
% has the horizontal and vertical double adjunctions going in the opposite directions
% because the coholonomy is contravariant:

\begin{cor}
\label{cor:fully_faithful_coholonomy_implies_all_adjunctions_equivalent}
  Let $\bbA$ and $\bbX$ be double categories with cofolding and fully
  faithful coholonomies.  Let $F \co \bbX \rightarrow \bbA$ be a
  double functor compatible with the cofoldings.  Then the following
  are equivalent.
\begin{enumerate}
  \item
  \label{cor:fully_faithful_coholonomy_implies_all_adjunctions_equivalent:horizontal_double}
The double functor $F$ admits a horizontal right double adjoint (not
necessarily compatible with the cofoldings).
  \item
  \label{cor:fully_faithful_coholonomy_implies_all_adjunctions_equivalent:horizontal_2}
  The 2-functor $\bfH F \co \bfH \bbX \rightarrow \bfH \bbA $ admits a
  right 2-adjoint.
  \item
  \label{cor:fully_faithful_coholonomy_implies_all_adjunctions_equivalent:vertical_double}
The double functor $F$ admits a vertical left double adjoint (not
necessarily compatible with the cofoldings).
  \item
  \label{cor:fully_faithful_coholonomy_implies_all_adjunctions_equivalent:vertical_2}
  The 2-functor $\bfV F \co \bfV \bbX \rightarrow \bfV \bbA$ admits a
  left 2-adjoint.
\end{enumerate}
\end{cor}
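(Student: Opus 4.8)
The plan is to run the proof of Corollary~\ref{cor:fully_faithful_holonomy_implies_all_adjunctions_equivalent} essentially verbatim, replacing each folding ingredient by its cofolding counterpart: Lemma~\ref{ffcoh=>quintet} for Lemma~\ref{ffhol=>quintet}, Proposition~\ref{prop:H_and_V_fully_faithful_cofolding_case} for Proposition~\ref{prop:H_and_V_fully_faithful}, the cofolding clause of Proposition~\ref{prop:folding_double_adjunction_iff_horizontal_double_adjunction}, and a cofolding analogue of Lemma~\ref{VtoH}. The single new feature is an extra reversal of $2$-cells forced by the contravariance of the coholonomy; this is what makes items (iii) and (iv) speak of \emph{left} adjoints, and its bookkeeping is the main point to get right.

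First I would prove (i)$\Leftrightarrow$(ii). The implication (i)$\Rightarrow$(ii) is formal: the $2$-functor $\bfH\co\mathbf{DblCat_h}\to\mathbf{2Cat}$ preserves adjunctions, so a horizontal right double adjoint $G$ of $F$ gives a right $2$-adjoint $\bfH G$ of $\bfH F$. For (ii)$\Rightarrow$(i), suppose $\bfH F$ has a right $2$-adjoint $K\co\bfH\bbA\to\bfH\bbX$. By Lemma~\ref{ffcoh=>quintet} the cofolding $\Lambda_\bbX\co\bbX\to\overline{\bbQ}\bfH\bbX$ is an isomorphism of double categories, so set $G:=\Lambda_\bbX^{-1}\circ\overline{\bbQ}(K)\circ\Lambda_\bbA\co\bbA\to\bbX$; this is compatible with the cofoldings and satisfies $\bfH G=K$ (since $\bfH\Lambda=\id$). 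Hence $\bfH F\dashv\bfH G$, and the cofolding clause of Proposition~\ref{prop:folding_double_adjunction_iff_horizontal_double_adjunction} produces a horizontal double adjunction $F\dashv G$.

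Next I would prove (ii)$\Leftrightarrow$(iv) by means of the cofolding analogue of Lemma~\ref{VtoH}: the fully faithful coholonomy and the cofolding give a $1$--$1$ correspondence between $2$-natural transformations $\bfV F_1\Rightarrow\bfV F_2$ and $2$-natural transformations $\bfH F_2\Rightarrow\bfH F_1$, the reversal of direction reflecting the contravariance of $(-)^\ast$ (this is what the superscript ${}^{\mathrm{co}}$ in Proposition~\ref{prop:H_and_V_fully_faithful_cofolding_case} encodes, and the cofolding analogue of Remark~\ref{rmk:folding_implies_2cell_composition_compatibility}; it follows from axiom~\ref{def:cofolding:Lambdahorizontal} of Definition~\ref{def:cofolding}, exactly as Lemma~\ref{VtoH} follows in the folding case). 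Given a right $2$-adjoint of $\bfH F$, realized as $\bfH G$ with $G$ compatible with the cofoldings (as in the previous paragraph), the unit $\id\Rightarrow\bfH(GF)$ and counit $\bfH(FG)\Rightarrow\id$ transport under this correspondence to $2$-natural transformations $\bfV(GF)\Rightarrow\id$ and $\id\Rightarrow\bfV(FG)$; since the correspondence respects vertical and horizontal composition and whiskering of $2$-cells, these satisfy the triangle identities and exhibit $\bfV G\dashv\bfV F$, i.e.\ a left $2$-adjoint of $\bfV F$. The converse runs the correspondence backwards, first using Proposition~\ref{prop:H_and_V_fully_faithful_cofolding_case} to realize a given left $2$-adjoint of $\bfV F$ as $\bfV G$ for some $G\co\bbA\to\bbX$ compatible with the cofoldings.

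Finally, (iii)$\Leftrightarrow$(iv): the implication (iii)$\Rightarrow$(iv) is formal, by applying the canonical $2$-functor $\bfV\co\mathbf{DblCat_v}\to\mathbf{2Cat}$ to a vertical left double adjoint of $F$, while (iv)$\Rightarrow$(iii) uses that $\bfV\co\mathbf{DblCatCofoldCohol_v}^{\mathrm{co}}\to\mathbf{2Cat}$ is an equivalence (Proposition~\ref{prop:H_and_V_fully_faithful_cofolding_case}), so that a left $2$-adjoint of $\bfV F$ together with its unit, counit and triangle identities lifts --- through the $2$-cell reversal of the ${}^{\mathrm{co}}$ --- to a vertical left double adjoint of $F$ compatible with the cofoldings; forgetting compatibility finishes this direction. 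Assembling (i)$\Leftrightarrow$(ii)$\Leftrightarrow$(iv)$\Leftrightarrow$(iii) completes the proof. The step I expect to be the main obstacle is exactly the variance bookkeeping above: fixing the precise direction of the reversal in the cofolding analogue of Lemma~\ref{VtoH}, checking its compatibility with the composition and whiskering of $2$-cells so the triangle identities genuinely transport, and verifying that through this reversal a right $2$-adjoint of $\bfH F$ corresponds consistently to a left $2$-adjoint of $\bfV F$ and to a vertical left double adjoint of $F$ --- this being the one place where the cofolding argument really differs from the folding one.
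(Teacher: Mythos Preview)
Your proposal is correct and follows essentially the same route as the paper. The paper does not spell out a proof of this corollary, leaving it as the evident cofolding analogue of Corollary~\ref{cor:fully_faithful_holonomy_implies_all_adjunctions_equivalent}; your argument instantiates that analogy faithfully, invoking Lemma~\ref{ffcoh=>quintet}, the cofolding clause of Proposition~\ref{prop:folding_double_adjunction_iff_horizontal_double_adjunction}, Proposition~\ref{prop:H_and_V_fully_faithful_cofolding_case}, and the cofolding version of Lemma~\ref{VtoH}, and you correctly isolate the one genuinely new point --- the $2$-cell reversal from the contravariant coholonomy that turns right adjoints on the horizontal side into left adjoints on the vertical side.
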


\bigskip

\section{Endomorphisms and Monads in a Double Category}
\label{sec:EndosAndMonadsInADoubleCategory}

The notions of endomorphism and monad in a double category were introduced in
\cite{FioreGambinoKock:DoubleMonadsI}, the main theorem of which gave
sufficient conditions for the existence of free monads in a double category.
One of the goals of this paper is to simultaneously remove several hypotheses
from our main theorem in~\cite{FioreGambinoKock:DoubleMonadsI} and strengthen
its conclusion to obtain
Theorem~\ref{thm:existence_of_free_monads} of this
paper, which says that if a double category $\bbD$ with cofolding admits the
construction of free monads in its horizontal 2-category, then $\bbD$ admits the
construction of free monads as a double category.  Towards that goal, we prove
in this section that a cofolding on $\bbD$ induces a cofolding on the double
categories $\bEnd(\bbD)$ and $\bMnd(\bbD)$ of endomorphisms and monads in
$\bbD$, see \cite[Definitions~2.3~and~2.4]{FioreGambinoKock:DoubleMonadsI}.
% We
% also give examples of $\bEnd(\bbD)$ and $\bMnd(\bbD)$ in the case that $\bbD$ is
% the double category of quintets.
Another goal of this paper is
Theorem~\ref{thm:EilenbergMooreExistence}, the characterization of the existence
of Eilenberg--Moore objects in a double category in terms of representability of
certain parameterized presheaves.  For that we also need an understanding of the
double category $\bMnd(\bbD)$.

\bigskip

Following \cite{FioreGambinoKock:DoubleMonadsI}, by endomorphism and
monad in a double category we mean horizontal endomorphism and horizontal
monad.  Hence an {\em endomorphism} in a double category is a pair $(X,P)$ where $X$
is an object and $P:X\to X$ is a horizontal morphism.  A {\em monad structure} on
$(X,P)$ consists of squares
\[
\xymatrix{
X \ar[r]^{P} \ar@{=}[d]  \ar@{}[drr]|{\mu_P} & X \ar[r]^{P} & X \ar@{=}[d] \\
X \ar[rr]_{P} & & X } \qquad \xymatrix{
X \ar@{=}[r]   \ar@{=}[d]  \ar@{}[dr]|{\eta_P}  & X \ar@{=}[d] \\
X \ar[r]_{P} & X }
\]
satisfying obvious laws of associativity and unitality.  In other words,
endomorphisms and monads are the same as endomorphisms and monads in the
horizontal $2$-category.

A {\em horizontal map} between endomorphisms $(X,P)$ and $(Y,Q)$
is a horizontal morphism $F \co X \rightarrow Y$ together with a square
\begin{equation}\label{monadmap}
\begin{array}{c}
  \xymatrix{
X \ar[r]^{F} \ar@{=}[d]   \ar@{}[drr]|{\phi}   & Y \ar[r]^{Q} & Y \ar@{=}[d]  \\
X \ar[r]_{P} & X \ar[r]_{F} & Y .}
\end{array}
\end{equation}
A {\em vertical map} $(u, \bar{u}) : (X,P) \rightarrow (X',P')$ consists of a
vertical morphism $u \co X \rightarrow X'$ and a square
\[
\xymatrix{
X \ar[r]^{P} \ar[d]_{u} \ar@{}[dr]|{\bar{u}}   & X \ar[d]^{u} \\
X'  \ar[r]_{P'} & X' .}
\]
The definitions of horizontal and vertical maps between monads are similar,
but the squares $\phi$ and $\bar u$ are then subject to some evident compatibility
conditions with respect to the monad structures.
There are also notions of endomorphism square and monad square (which we shall
not recall here) making $\bEnd(\bbD)$ and $\bMnd(\bbD)$ into
double categories, cf.~\cite{FioreGambinoKock:DoubleMonadsI}.  See
Examples~\ref{exa:End(Span)} and \ref{exa:Mnd(Span)}.

The direction of the square $\phi$ in the definition of horizontal endomorphism
map and horizontal monad map is chosen
so as to agree with the convention of Street~\cite{Street:formal-monads} for
endomorphism maps and monad maps in the horizontal $2$-category, which in turn
is motivated among other things by the desire to pullback algebras
for monads.  This choice has some consequences for some other choices in this
paper, and we pause to explain this.  For brevity we talk only about monads, the
case of endomorphisms being analogous.

The other natural choice for horizontal monad maps
% (or monad maps in a $2$-category)
$(X,P) \to (Y,Q)$ is with squares of the form
\[
\xymatrix{
X \ar[r]^{P} \ar@{=}[d]   \ar@{}[drr]|{\phi}   & X \ar[r]^{F} & Y \ar@{=}[d]  \\
X \ar[r]_{F} & Y \ar[r]_{Q} & Y ,}
\]
which for fun we call Avenue monad maps in the following discussion.  We
temporarily denote
by $\bMnd^{\mathrm{st}}(\bbD)=\bMnd(\bbD)$
the double category whose horizontal morphisms are
Street monad maps (the convention used elsewhere in this paper), and by
$\bMnd^{\mathrm{av}}(\bbD)$ the double category with Avenue monad maps.
The two double categories have the same vertical morphisms.

Both notions of monad map refer only to the horizontal $2$-category and make
sense already for $2$-categories, so for a $2$-category $\bfK$ we have two
different $2$-categories of monads, $\Mnd^{\mathrm{st}}(\bfK)$ and
$\Mnd^{\mathrm{av}}(\bfK)$.  The two notions of monad maps for
$2$-categories can be combined into a
single double category that has Street monad maps as horizontal morphisms and
Avenue monad maps as vertical morphisms; there is a unique natural choice of
what square should be taken to be to make this into a double category.  This
double category is naturally isomorphic to $\bMnd^{\mathrm{st}}(\bbQ\bfK)$, which
is different from
$\bbQ(\Mnd^{\mathrm{st}}(\bfK))$: both double categories have
$\Mnd^\mathrm{st}(\bfK)$ as horizontal $2$-category, but while the vertical
$2$-category of $\bMnd^{\mathrm{st}}(\bbQ\bfK)$ is $\Mnd^{\mathrm{av}}(\bfK)$ with
$2$-cells reversed, the vertical $2$-category of $\bbQ(\Mnd^{\mathrm{st}}(\bfK))$
is $\Mnd^\mathrm{st}(\bfK)$ with the $2$-cells reversed.
In contrast we have the following result, whose
proof is a straightforward but tedious verification.

\begin{lem}\label{Qbar-Mnd}
  For any $2$-category $\bfK$, we have natural identifications
  \begin{xalignat*}{3}
    \bEnd^{\mathrm{st}}( \Qbar(\bfK) )  &= \Qbar( \End^{\mathrm{st}}(\bfK) )
   \  & \ \bMnd^{\mathrm{st}}( \Qbar(\bfK) )  &= \Qbar(
    \operatorname{Mnd}^{\mathrm{st}}(\bfK) ) \\
    \bEnd^{\mathrm{av}}( \bbQ(\bfK) ) &= \bbQ( \End^{\mathrm{av}}(\bfK) )
    \ & \ \bMnd^{\mathrm{av}}( \bbQ(\bfK) ) &= \bbQ(
    \Mnd^{\mathrm{av}}(\bfK) ) .
    \end{xalignat*}
\end{lem}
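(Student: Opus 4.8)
The plan is to prove all four identifications by the same method: unfold both sides until one sees that they carry literally the same data, composed by literally the same pasting operations in $\bfK$. I will set up the argument for the first identity, $\bEnd^{\mathrm{st}}(\Qbar\bfK)=\Qbar(\End^{\mathrm{st}}\bfK)$, in enough detail to see the shape of the verification; the monad identities are then obtained by carrying along the associativity and unit $2$-cells together with their axioms, and the two ``Avenue''/$\bbQ$ identities by reversing the direction of every comparison $2$-cell. The first observation is that the $\bEnd$ and $\bMnd$ constructions are pointwise in the horizontal direction: a horizontal endomorphism (resp.\ monad) map carries a square with trivial vertical boundary, so $\bfH(\bEnd^{\mathrm{st}}\bbD)=\End^{\mathrm{st}}(\bfH\bbD)$ and $\bfH(\bMnd^{\mathrm{st}}\bbD)=\Mnd^{\mathrm{st}}(\bfH\bbD)$ for any double category $\bbD$, and likewise for the ``av'' versions. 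Since $\bfH$ carries both $\Qbar$ and $\bbQ$ of a $2$-category back to that $2$-category, the two sides of each of the four identities already agree on their horizontal $2$-categories. It therefore remains to match the vertical $1$-categories and the squares, and to check that horizontal composition of squares, vertical composition of squares, and the horizontal and vertical units agree.

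For the vertical $1$-category, take the case of $\bEnd^{\mathrm{st}}(\Qbar\bfK)$. A vertical morphism of $\Qbar\bfK$ is $u=v^{\mathrm{op}}$ for a $1$-cell $v\co X'\to X$ of $\bfK$, and a vertical endomorphism map $(X,P)\to(X',P')$ over it is such a $u$ together with a square $\bar u$ of $\Qbar\bfK$ with the appropriate boundary; by the description of squares of $\Qbar\bfK$ in Example~\ref{exa:twistedquintets}, this $\bar u$ is precisely a $2$-cell $Pv\Rightarrow vP'$ of $\bfK$, which is exactly the datum of a Street endomorphism map $(X',P')\to(X,P)$ in $\End^{\mathrm{st}}\bfK$. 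Since the vertical $1$-category of $\Qbar$ of a $2$-category is the opposite of its underlying $1$-category, this is exactly a vertical morphism of $\Qbar(\End^{\mathrm{st}}\bfK)$, and on both sides composition of vertical morphisms is the corresponding pasting in $\bfK$. This also explains, as promised in the discussion before the lemma, why the analogue fails with $\bbQ$ in place of $\Qbar$: the reverse of a Street map is again a Street map, so routing the verticals through $\Qbar$ produces no Street/Avenue mismatch, whereas $\bbQ$ keeps the verticals pointing in the same direction as the horizontals and forces the mismatch.

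For the squares, a square of $\bEnd^{\mathrm{st}}(\Qbar\bfK)$ is a square of $\Qbar\bfK$ --- a $2$-cell of $\bfK$ of the inverse-quintet shape --- satisfying the endomorphism-square compatibility, and transcribing that compatibility exhibits this datum as exactly a $2$-cell of $\End^{\mathrm{st}}\bfK$ of the inverse-quintet shape, i.e.\ a square of $\Qbar(\End^{\mathrm{st}}\bfK)$. Horizontal composition, vertical composition and the two units of squares are, on each side, computed by pasting $2$-cells in $\bfK$, so they agree on the nose, and the interchange law is inherited from $\bfK$. For the two $\bMnd$ identities one repeats this carrying the monad-structure $2$-cells along, and checks that the associativity and unitality axioms demanded of a monad-map square on one side are, term by term, the pasting equations demanded on the other. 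For the two Avenue/$\bbQ$ identities the argument is verbatim the same after reversing every comparison $2$-cell: an Avenue endomorphism (resp.\ monad) map carries $\phi\co FP\Rightarrow QF$ rather than $QF\Rightarrow FP$, and the squares of $\bbQ\bfK$ are the $2$-cells of the direct-quintet shape rather than the inverse one, so the shapes again match up.

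The difficulty here is entirely one of bookkeeping, not of mathematics. One must keep strict track of the direction of each comparison $2$-cell --- the $\phi$ of an endo/monad map (Street versus Avenue), the orientation of the quintet shape ($\bbQ$ versus $\Qbar$), and the reversal of the vertical $1$-category effected by $\Qbar$ --- and, in the monad cases, verify that each of the (two or three) coherence axioms for a monad-map square translates into the literally identical pasting identity on the other side. A small point to be careful about is that $\End^{\mathrm{st}}\bfK$ and $\Mnd^{\mathrm{st}}\bfK$ are $2$-categories rather than double categories, so the $\Qbar$ and $\bbQ$ appearing on the right-hand sides are the $2$-categorical quintet constructions of Examples~\ref{exa:quintets} and \ref{exa:twistedquintets}; one should confirm that the squares produced by unfolding the left-hand sides are precisely the quintet $2$-cells of those constructions.
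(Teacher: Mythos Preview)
Your proposal is correct and is precisely the ``straightforward but tedious verification'' that the paper invokes in lieu of a proof; in fact you supply considerably more detail than the paper does, correctly identifying the key bookkeeping point that the opposite taken in the vertical direction by $\Qbar$ matches the Street convention for the direction of $\phi$.
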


The fact that Street monad maps are more compatible with the inverse quintet
construction $\Qbar$ of Example~\ref{exa:twistedquintets}
than with the direct quintet construction $\bbQ$
(Example~\ref{exa:quintets})
explains to some extent
why in the following it is cofolding rather than folding that goes well with
monads.  With the Avenue convention on monad maps, the following results would
have concerned folding instead of cofolding.

\bigskip

The following is the main point of this section: a cofolding on a double
category $\bbD$ induces a
cofolding on $\bMnd(\bbD)$ and $\bEnd(\bbD)$.

\begin{prop} \label{prop:cofolding_on_D_induces_cofolding_on_Mnd(D)_and_on_End(D)}
If $(\bbD,\Lambda^\bbD)$ is a double category with cofolding, then the double
categories $\bMnd(\bbD)$ and $\bEnd(\bbD)$ inherit cofoldings from $\bbD$, and
the forgetful double functor $U\co \bMnd(\bbD) \rightarrow \bEnd(\bbD)$
preserves them.
% Moreover, if the coholonomy on $\bbD$ is fully faithful, then
% the induced coholonomies on $\bMnd(\bbD)$ and $\bEnd(\bbD)$ are also fully
% faithful.
\end{prop}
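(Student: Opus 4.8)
The plan is to transport the cofolding $\Lambda^\bbD$ on $\bbD$ to $\bMnd(\bbD)$ and $\bEnd(\bbD)$ piece by piece, exploiting the fact that every cell of $\bMnd(\bbD)$ and $\bEnd(\bbD)$ is assembled from underlying cells of $\bbD$. Throughout, $\bEnd(\bbD)$ is the same story as $\bMnd(\bbD)$ with all the monad-structure bookkeeping deleted, so I describe $\bMnd(\bbD)$ and indicate where $\bEnd(\bbD)$ simplifies.

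First, the coholonomy. To a vertical monad map $(u,\bar u)\co (X,P)\to(X',P')$ I would assign the pair $(u^*,\Lambda^\bbD(\bar u))$, where $u^*\co X'\to X$ is the coholonomy of $u$ in $\bbD$ and $\Lambda^\bbD(\bar u)$ is the image of the square $\bar u$ under the cofolding bijection $\Lambda^{P,u}_{u,P'}$. By the very shape of \eqref{equ:boundary2:cofolding}, the square $\Lambda^\bbD(\bar u)$ has top boundary $[u^*\ P]$, bottom boundary $[P'\ u^*]$, and identity sides, which is exactly the data of a horizontal endomorphism map $(X',P')\to(X,P)$ in the sense of \eqref{monadmap}. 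The first genuine verification is that $(u^*,\Lambda^\bbD(\bar u))$ is moreover a horizontal \emph{monad} map, i.e.\ compatible with the monad multiplications and units; this is obtained by applying $\Lambda^\bbD$ to the two equations of squares expressing that $(u,\bar u)$ is a vertical monad map, and then rewriting the images of the various composites using the cofolding axioms \ref{def:cofolding:Lambdahorizontal}, \ref{def:cofolding:Lambdavertical} and \ref{def:cofolding:Lambdaidentity} (which say precisely how $\Lambda^\bbD$ behaves on horizontal composites, vertical composites and identity squares), together with the fact that $\Lambda^\bbD$ fixes the monad-structure squares $\mu_P,\mu_{P'},\eta_P,\eta_{P'}$ since these have trivial vertical boundary and $\Lambda^\bbD$ is the identity on $\bfH\bbD$. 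For $\bEnd(\bbD)$ nothing is needed here, as endomorphism maps carry no compatibility condition. Functoriality of $j\mapsto j^*$ on the vertical $1$-category of $\bMnd(\bbD)$ then follows from functoriality of $(-)^*$ on $\bbD$ and from axiom~\ref{def:cofolding:Lambdavertical}, which matches $\Lambda^\bbD$ of a vertical composite of squares with the formula for composing horizontal monad maps, and from the identity/identity-square axioms for the unit.

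Second, the square bijections. A square-boundary in $\bMnd(\bbD)$ consists of two horizontal monad maps and two vertical monad maps; its underlying boundary in $\bbD$ carries the cofolding bijection $\Lambda^\bbD$, and I would take the $\bMnd(\bbD)$-bijection to be its restriction. The point to check is that $\Lambda^\bbD$ carries monad squares to monad squares and reflects them. Since a square being a monad square amounts to an equality between two composites built from its underlying square, the boundary squares, and the fixed monad-structure squares on the corners, and since $\Lambda^\bbD$ is a double functor (hence interacts with those composites via axioms \ref{def:cofolding:Lambdahorizontal}–\ref{def:cofolding:Lambdaidentity}) which is moreover injective on squares by full faithfulness, the monad-square condition on a square is equivalent to the monad-square condition on its image; this is where I expect the bulk of the routine verification to lie. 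Granting the bijections, the four cofolding axioms for $\bMnd(\bbD)$ are then inherited verbatim from those for $\Lambda^\bbD$, because horizontal composition, vertical composition and identity squares in $\bMnd(\bbD)$ are all computed on underlying data in $\bbD$. The case of $\bEnd(\bbD)$ is identical with the monad-square discussion omitted.

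Finally, that the forgetful double functor $U\co \bMnd(\bbD)\to\bEnd(\bbD)$ preserves the cofoldings is immediate from the construction: $U$ leaves underlying objects, horizontal and vertical morphisms and squares unchanged, while both cofoldings were defined by the \emph{same} recipe, using $(-)^*$ and $\Lambda^\bbD$ on that underlying data, so $U\big((u,\bar u)^*\big)=\big(U(u,\bar u)\big)^*$ and $U$ commutes strictly with the square bijections. I expect the main obstacle to be the compatibility-with-monad-structure verification for the coholonomy together with the stability of monad squares under $\Lambda^\bbD$ in the second step. A more conceptual, though not obviously shorter, alternative would be to regard $\Lambda^\bbD$ as a double functor $\bbD\to\Qbar\bfH\bbD$ that is the identity on $\bfH\bbD$ and fully faithful on squares, apply the functor $\bMnd(-)$, and then use Lemma~\ref{Qbar-Mnd} together with $\bfH\bMnd=\Mnd\,\bfH$ to recognise the codomain as $\Qbar\bfH\bMnd(\bbD)$ and the resulting double functor as the identity on $\bfH\bMnd(\bbD)$; but verifying that $\bMnd(\Lambda^\bbD)$ is still fully faithful on squares reduces to essentially the same monad-square computation.
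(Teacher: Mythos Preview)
Your proposal is correct and follows essentially the same approach as the paper: define the coholonomy by $(u,\bar u)\mapsto(u^*,\Lambda^\bbD(\bar u))$ and the square bijections by $\Lambda^\bbD$ itself, then verify that the monad-map and monad-square conditions are carried over using the cofolding axioms, with the forgetful functor preserving everything because both cofoldings are given by the same recipe. The only cosmetic difference is that the paper treats $\bEnd(\bbD)$ first and then checks the construction respects monad structure, whereas you treat $\bMnd(\bbD)$ first and note $\bEnd(\bbD)$ is the simplification; your final remark about the conceptual route via $\bMnd(\Lambda^\bbD)$ and Lemma~\ref{Qbar-Mnd} is not in the paper but is a valid alternative packaging.
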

\begin{proof}
  We first construct the cofolding on $\bEnd(\bbM)$:
  if $(u, \bar{u})\co (X, P) \rightarrow (X',P')$ is a vertical
endomorphism map, then the corresponding horizontal endomorphism map
$(u, \bar{u})^*$ under the coholonomy is
\[
(u^*, \Lambda^\bbD(\bar{u}))\co (X', P') \rightarrow (X,P) ,
\]
if $\alpha$ is an endomorphism square, then the corresponding
endomorphism 2-cell is the $\bbD$-cofolding of $\alpha$, namely
$\Lambda^\bbD(\alpha)$.
It is straightforward to check, using  the functoriality of the coholonomy on
$\bbD$ and the compatibility of $\Lambda^\bbD$ with horizontal and vertical
composition of squares, that these assignments constitute a cofolding
on $\bEnd(\bbD)$.

Next we verify that the same construction of the cofolding works for monads:
if
$(X,P)$ and $(X',P')$ are monads, and $(u, \bar{u})$ is vertical
monad map, then $(u, \bar{u})^*=(u^*, \Lambda^\bbD(\bar{u}))$ is a
horizontal monad map, and if
$\alpha$ is a monad square, then $\Lambda^\bbD(\alpha)$ is a monad
$2$-cell.
This  follows readily from the compatibility of $\Lambda^\bbD$ with horizontal and
vertical composition of squares.  Since the two cofoldings are given by the
same construction, it is clear that the forgetful functor preserves them.
%
% Finally, if the coholonomy on $\bbD$ is fully faithful, it is easy to see that
% the induced coholonomies on $\bEnd(\bbD)$ and $\bMnd(\bbD)$ are again fully
% faithful, using the bijectivity of  $\Lambda^\bbD$.
\end{proof}

In
Proposition~\ref{prop:cofolding_on_D_induces_cofolding_on_Mnd(D)_and_on_End(D)},
note that if $\bbD$ has fully faithful coholonomy, then the induced coholonomies
on $\bMnd(\bbD)$ and $\bEnd(\bbD)$ are again fully faithful.  This follows from
Lemma~\ref{ffcoh=>quintet} and Lemma~\ref{Qbar-Mnd}.  We have seen in
Corollary~\ref{cor:fully_faithful_coholonomy_implies_all_adjunctions_equivalent}
that when the coholonomy is fully faithful, all questions about adjunction can
be settled in the horizontal $2$-category, but we noted also that this
requirement is a very restrictive condition.  The following technical result can
be interpreted as saying that in the situation of the preceding proposition,
although $\bEnd(\bbD)$ and $\bMnd(\bbD)$ do not often have fully faithful
coholonomies, they do have some fully faithfulness relative to $\bbD$: for a
{\em fixed } vertical morphism $u$ in $\bbD$, we do get certain bijections.
This result, which generalizes \cite[Lemma~3.4]{FioreGambinoKock:DoubleMonadsI},
will play an important role in the proofs of
Proposition~\ref{prop:free_to_horizontal_adj} and
Theorem~\ref{thm:existence_of_free_monads}.

\begin{prop}
  \label{prop:bijection_for_fixed_underlying_morphism}
  In the situation of
  Proposition~\ref{prop:cofolding_on_D_induces_cofolding_on_Mnd(D)_and_on_End(D)},
  if $u\co X \rightarrow X'$ is a fixed vertical morphism
in $\bbD$, then
\[
(u, \bar{u}) \mapsto (u^*, \Lambda^{\bbD}(\bar{u}) )
\]
is a bijection between vertical endomorphism maps $(X,P) \rightarrow
(X',P')$ with underlying vertical morphism $u$ and horizontal
endomorphism maps $(X',P') \rightarrow (X,P)$ with underlying
horizontal morphism $u^*$. If $(X,P)$ and $(X',P')$ are monads, we
have a similar bijection between vertical monad maps
with underlying morphism $u$ and horizontal monad maps with
underlying morphism $u^*$.
\end{prop}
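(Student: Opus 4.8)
The plan is to reduce both the endomorphism statement and the monad statement to one observation: each of the two sets in the claimed bijection is, by definition, a set of \emph{squares of $\bbD$ with a prescribed boundary}, and these two boundaries are exchanged by the cofolding bijection $\Lambda^{\bbD}$. Unwinding the definitions of $\bEnd(\bbD)$ from \cite{FioreGambinoKock:DoubleMonadsI}: a vertical endomorphism map $(X,P)\to(X',P')$ with underlying vertical morphism $u\co X\to X'$ is exactly the datum of a square
$$\begin{array}{c}\xymatrix{X \ar[r]^{P} \ar[d]_{u} \ar@{}[dr]|{\bar u} & X \ar[d]^{u} \\ X' \ar[r]_{P'} & X'}\end{array}$$
in $\bbD$, i.e.\ a square with left and right sides $u$, top side $P$, bottom side $P'$; while a horizontal endomorphism map $(X',P')\to(X,P)$ with underlying horizontal morphism $u^{*}$ is exactly the datum of a square $\phi$ of $\bbD$ with identity vertical sides, top edge $[u^{*} \ P]$ and bottom edge $[P' \ u^{*}]$, as in \eqref{monadmap}. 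These are precisely the two boundary shapes \eqref{equ:boundary1:cofolding} and \eqref{equ:boundary2:cofolding} for the choice $f=P$, $j=u$, $k=u$, $g=P'$, so the cofolding bijection $\Lambda^{P,u}_{u,P'}$ of Definition~\ref{def:cofolding} \emph{is} the assignment $\bar u\mapsto\Lambda^{\bbD}(\bar u)$, and it is a bijection from vertical endomorphism maps over $u$ to horizontal endomorphism maps over $u^{*}$. Since $u$ is fixed throughout, this disposes of the endomorphism case.

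For monads, I would show that this same bijection $\Lambda^{P,u}_{u,P'}$ restricts to a bijection between the subsets cut out by the monad-map compatibility axioms. Injectivity is immediate from faithfulness of $\Lambda^{\bbD}$ on squares, and the fact that the forward assignment carries a vertical monad map to a horizontal monad map was already recorded in the proof of Proposition~\ref{prop:cofolding_on_D_induces_cofolding_on_Mnd(D)_and_on_End(D)}. What remains is surjectivity onto horizontal monad maps with underlying morphism $u^{*}$: given such a pair $(u^{*},\phi)$, fullness of $\Lambda^{\bbD}$ on squares yields the unique square $\bar u$ with the boundary above and $\Lambda^{\bbD}(\bar u)=\phi$, and one must check that $\bar u$ satisfies the associativity and unitality axioms of a vertical monad map. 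Each such axiom is an equation between two pastings of squares of $\bbD$ built from $\bar u$, the structure squares $\mu_P,\mu_{P'},\eta_P,\eta_{P'}$, and identity squares. Applying $\Lambda^{\bbD}$, and using that it is the identity on the horizontal $2$-category --- so it fixes $\mu_P,\mu_{P'},\eta_P,\eta_{P'}$ together with every vertical identity square --- sends each horizontal identity square $i^h_{j}$ to the vertical identity square $i^v_{j^{*}}$, and is compatible with horizontal and vertical composition of squares (Definition~\ref{def:cofolding}, axioms \ref{def:cofolding:Lambdahorizontal}, \ref{def:cofolding:Lambdavertical}, \ref{def:cofolding:Lambdaidentity}), one transforms the vertical-monad axiom for $\bar u$ into precisely the (Street) horizontal-monad-map axiom for $\Lambda^{\bbD}(\bar u)=\phi$; and conversely, by faithfulness of $\Lambda^{\bbD}$ on squares, an equation of squares holds for $\bar u$ as soon as its $\Lambda^{\bbD}$-image holds for $\phi$. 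Hence $\phi$ being a horizontal monad map forces $\bar u$ to be a vertical monad map, and the restricted assignment is the desired bijection.

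I expect the only real work to be the bookkeeping in the last step --- writing out the pasting diagrams for the monad-compatibility axioms and tracking them through the cofolding axioms --- but I do not anticipate a genuine obstacle: $\Lambda^{\bbD}$ is a double functor that is the identity on $\bfH\bbD$ and a bijection on squares for each boundary, so it both preserves and reflects every equation expressible via horizontal composition, vertical composition, and identity squares, which is all that the monad-map conditions involve; the verification is of the same routine, ``straightforward but tedious'' nature as those in Proposition~\ref{prop:cofolding_on_D_induces_cofolding_on_Mnd(D)_and_on_End(D)} and Lemma~\ref{Qbar-Mnd}. (Alternatively, one could run the whole argument inside $\Qbar\bfH\bbD$ via Lemma~\ref{Qbar-Mnd}, identifying horizontal monad maps over $u^{*}$ with vertical monad maps in $\bMnd(\bbD)$ lying over the coholonomy of $u$; but the direct square-chase above seems more transparent.)
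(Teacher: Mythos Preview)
Your proposal is correct and follows essentially the same approach as the paper: both observe that vertical endomorphism maps over $u$ and horizontal endomorphism maps over $u^*$ are, by definition, squares in $\bbD$ with precisely the two boundary shapes \eqref{equ:boundary1:cofolding} and \eqref{equ:boundary2:cofolding} exchanged by the cofolding bijection $\Lambda^{P,u}_{u,P'}$. For the monad case the paper simply says ``the assertion about monad maps is similar,'' whereas you spell out the expected argument (that $\Lambda^{\bbD}$, being a double functor fully faithful on squares and the identity on $\bfH\bbD$, preserves and reflects the monad-compatibility equations); this is exactly the routine verification the paper leaves implicit.
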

\begin{proof}
  Vertical endomorphism maps over $u$ from $(X,P)$ to $(X',P')$ are squares
  \[
\xymatrix{
X \ar[r]^{P} \ar[d]_{u} \ar@{}[dr]|{\bar{u}}   & X \ar[d]^{u} \\
X'  \ar[r]_{P'} & X' ,}
\]
  which under $\Lambda^{\bbD}$ correspond to squares
\[
\xymatrix{
X' \ar[r]^{u^*} \ar@{=}[d]   \ar@{}[drr]|{\Lambda^{\bbD}(\bar u)}   & X \ar[r]^{P} & X \ar@{=}[d]  \\
X' \ar[r]_{P'} & X' \ar[r]_{u^*} & X .}
\]
  which are precisely the horizontal endomorphism maps over $u^*$ from $(X',P')$ to
  $(X,P)$.  The assertion about monad maps is similar.
% The bijection
% for a fixed vertical morphism $u$ (and hence also fixed $u^*$) follows from the
% bijectivity of the cofolding.
\end{proof}

\section{Example: Endomorphisms and Monads in $\Sp$}
\label{sec:DoubleAdjunctionBetweenEndosInSpanAndMonadsInSpan}

We consider the normal, horizontally weak double category $\Sp$ of spans in $\mathbf{Set}$ from
Example~\ref{exa:span} in order to exemplify the notions of endomorphism and monad in a double category,
to illustrate the local description of double adjunctions in Theorem~\ref{thm:double_adjunction_def_iff_v_pseudo} (a slightly weak version of Theorem~\ref{thm:double_adjunction_descriptions}~\ref{local}), and to motivate Theorem~\ref{thm:existence_of_free_monads} below. We establish by hand the following result, which is a special
case of \cite[Proposition 3.8]{FioreGambinoKock:DoubleMonadsI}.
\begin{prop}\label{freemonad_in_Span}
  The forgetful double functor $G:\bMnd(\Sp) \to \bEnd(\Sp)$ has a vertical
  double left adjoint $F$,
\begin{equation} \label{equ:DiGraph_Cat_Double_Adjunction}
\xymatrix@C=4pc{\bEnd(\Sp) \ar@/^1pc/[r]^{F} \ar@{}[r]|{\perp} &
\ar@/^1pc/[l]^{G} \bMnd(\Sp).}
\end{equation}
\end{prop}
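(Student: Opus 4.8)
The plan is to produce the left adjoint $F$ as the classical ``free category on a directed graph'' functor, promoted to a strict double functor, and then to apply the transpose of Theorem~\ref{thm:double_adjunction_def_iff_v_pseudo}. Since $\bEnd(\Sp)$ and $\bMnd(\Sp)$ are normal, horizontally weak double categories --- the horizontal weakness, and the normality, being inherited from $\Sp$ (Example~\ref{exa:span}) --- their transposes are normal, vertically weak, and the functors $F$ and $G$ below will be strict double functors. Thus the transposed Theorem~\ref{thm:double_adjunction_def_iff_v_pseudo} (equivalently, Theorem~\ref{thm:double_adjunction_descriptions}~\ref{local} read in the vertical direction) reduces the statement to exhibiting, for each horizontal endomorphism map $f$ in $\bEnd(\Sp)$ and each horizontal monad map $g$ in $\bMnd(\Sp)$, a bijection between the set of monad squares with horizontal top boundary $Ff$ and horizontal bottom boundary $g$ and the set of endomorphism squares with horizontal top boundary $f$ and horizontal bottom boundary $Gg$, natural in $f$ and $g$ and compatible with \emph{horizontal} composition of squares.

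First I would construct $F$. An endomorphism $(X,P)$, with $P$ the span $X \leftarrow E \rightarrow X$, is sent to the monad $(X,P^{\mathrm{path}})$, where $P^{\mathrm{path}}$ is the span $X \leftarrow \coprod_{n\ge 0} E \times_X \cdots \times_X E \rightarrow X$ of paths of composable edges (the length-zero paths furnishing the unit), with multiplication given by concatenation; this is precisely the free category on the graph underlying $(X,P)$. A vertical endomorphism map $(u,\bar u)$ is sent to $u$ on vertices together with the induced map on paths, a vertical monad map; a horizontal endomorphism map and an endomorphism square induce canonical data on paths by functoriality of the coproduct-of-fibre-products construction. One then checks that $F$ strictly preserves horizontal composition, vertical composition and all identities, using that the chosen model of pullbacks in $\Sp$ makes these preservations literal rather than merely up to coherent isomorphism; so $F$ is a strict double functor. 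Moreover, at each endomorphism the unit of the resulting adjunction has identity underlying vertical morphism, being the inclusion of the graph into its free category as length-one paths.

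For the bijection, fix $f$ and $g$. A monad square with top $Ff$ and bottom $g$ is, in particular, built from vertical monad maps out of the free categories $F(X,P)$ and $F(Y,Q)$; since any map (vertical or horizontal) out of a free category is uniquely determined by its composite with the universal inclusion of the generating graph, such a monad square corresponds bijectively to the analogous configuration built directly from the graphs, that is, to an endomorphism square with top $f$ and bottom $Gg$. Concretely the correspondence is precomposition with this universal inclusion, i.e.\ the usual free--forgetful hom-bijection carried out one square at a time. Naturality in $f$ and $g$ is then immediate from functoriality, and compatibility with horizontal composition of squares follows because concatenation of paths is strictly associative and unital and the universal inclusion is compatible with composites. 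Feeding these bijections into the transposed Theorem~\ref{thm:double_adjunction_def_iff_v_pseudo} produces the unit $\eta$ and counit $\varepsilon$ of a vertical double adjunction $F \dashv G$.

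The mathematical content --- the universal property of the free category on a graph --- is entirely classical; the work is bookkeeping at the double-categorical level, and this is where I expect the only real difficulty. The two delicate points are: (a) that the square-level structure of $\bEnd(\Sp)$ and $\bMnd(\Sp)$ from \cite{FioreGambinoKock:DoubleMonadsI} (endomorphism and monad squares together with their two compositions) is respected by the path construction, so that $F$ is genuinely \emph{strict} rather than merely normal colax, and (b) that the hom-bijection is compatible with horizontal composition of squares on the nose. Both rest on the particular choice of pullbacks that makes $\Sp$ normal, which is exactly the reason the pseudo version, Theorem~\ref{thm:double_adjunction_def_iff_v_pseudo}, rather than the strict Theorem~\ref{thm:double_adjunction_descriptions}~\ref{local}, is the appropriate instrument.
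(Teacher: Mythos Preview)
Your proposal is correct and follows essentially the same route as the paper: build $F$ as the free-category construction promoted to a strict double functor, then invoke the transpose of Theorem~\ref{thm:double_adjunction_def_iff_v_pseudo} via the restriction bijection (the paper writes it as $\alpha\mapsto\alpha_{\text{res}}$, keeping the middle function $U_1\to V_1$ and restricting the side functors to graph maps). The one place where your sketch is vaguer than the paper and deserves care is $F$ on a horizontal endomorphism map $(U,\phi)$: the underlying span is \emph{unchanged}, $F(U)_1=U_1$, and only $\phi$ is extended to $F(\phi)\colon U_1\times_{G_0'} F(G'_*)_1 \to F(G_*)_1\times_{G_0} U_1$ by iterating the ``commute past one edge'' move of Remark~\ref{rmk:phi_as_commuting_past}; this is not really ``functoriality of the coproduct-of-fibre-products construction,'' and getting it explicit is what makes the strictness of $F$ and the horizontal-composition compatibility of the bijection transparent.
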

Note that although
$\bEnd(\Sp)$ and $\bMnd(\Sp)$ are
horizontally weak double categories, the double
functors $F$ and $G$ strictly preserve all compositions and identities.
The 1-adjunction $$\xymatrix@C=4pc{\mathbf{DirGraph}
\ar@/^1pc/[r]^{Free} \ar@{}[r]|{\perp} & \ar@/^1pc/[l]^{Forget}
\mathbf{Cat}}$$ is the {\it vertical} 1-category part of
\eqref{equ:DiGraph_Cat_Double_Adjunction}.

\medskip

We next spell out the double categories $\bEnd(\Sp)$ and
$\bMnd(\Sp)$.
% recalled in
% Section~\ref{sec:EndosAndMonadsInADoubleCategory}.

\begin{examp}[Endomorphisms in $\Sp$] \label{exa:End(Span)}
Objects and vertical morphisms of $\bEnd(\Sp)$ are directed graphs $G_0
\leftarrow G_1 \rightarrow G_0$ and morphisms of directed graphs. A
horizontal morphism $(U,\phi)\co G_* \rightarrow G_*'$  in
$\bEnd(\Sp)$ is a span $U \co G_0 \leftarrow U_1 \rightarrow G_0'$
equipped with a chosen (not necessarily vertically invertible)
square in $\Sp$ as below.\footnote{If $U \co G_0 \leftarrow U_1 \rightarrow G_0'$ is not an identity span, then
a square as in \eqref{equ:horizontal_morphism_in_End(Span)} is a (not necessarily bijective) function $\phi\co
U_1 \times_{G_0'} G_1' \rightarrow G_1 \times_{G_0} U_1$ making the relevant squares commute. If $U$ is an identity span, then a square as in \eqref{equ:horizontal_morphism_in_End(Span)} is a (not necessarily bijective) function $\phi\co G_1' \to G_1$. Recall the choice of pullback described in Example~\ref{exa:span}.}
\begin{equation} \label{equ:horizontal_morphism_in_End(Span)}
\begin{array}{c}
\xymatrix{G_0 \ar@{=}[d] & \ar[l] U_1 \ar[r] & G_0' \ar@{}[d]|\phi &
\ar[l] G_1' \ar[r] & G_0' \ar@{=}[d] \\
G_0 & \ar[l] G_1 \ar[r] & G_0 & \ar[l] U_1 \ar[r] & G_0' .}
\end{array}
\end{equation}
Horizontal composition of horizontal morphisms is by pullback, with the usual choice made for identities as in Example~\ref{exa:span} ($\phi$ is then the identity on $G_1$). The
associated $\phi$-part of the composite is the vertical composite
of the following squares.
\begin{equation} \label{equ:phi_composition}
  \begin{array}{c}
\xymatrix{G_0 \ar@{=}[d]
\ar@{}[drr]|{1_U} & \ar[l] U_1 \ar[r] & G_0' \ar@{=}[d] & \ar[l] V_1
\ar[r] & G_0'' \ar@{}[d]|\psi & \ar[l] G_1'' \ar[r] & G_0''
\ar@{=}[d]
\\ G_0 \ar@{=}[d] & \ar[l] U_1 \ar[r] & G_0' \ar@{}[d]|\phi &
\ar[l] G_1' \ar[r] & G_0'  \ar@{=}[d] \ar@{}[drr]|{1_V} &
\ar[l] V_1 \ar[r] & G_0'' \ar@{=}[d]
\\ G_0 & \ar[l] G_1 \ar[r] & G_0 & \ar[l] U_1 \ar[r] & G_0' &
\ar[l] V_1 \ar[r] & G_0'' .}
\end{array}
\end{equation}
A square in $\bEnd(\Sp)$
\begin{equation} \label{equ:square_in_End(Span)}
  \begin{array}{c}
\xymatrix{G_* \ar[r]^U \ar[d]_{J_*} \ar@{}[dr]|{\alpha} & G_*'
\ar[d]^{J_*'} \\ H_* \ar[r]_{V} & H_*'}
\end{array}
\end{equation}
is a square in $\Sp$
$$\xymatrix{G_0 \ar[d] & \ar[l] U_1 \ar[d]_\alpha \ar[r] & G_0' \ar[d] \\
H_0 & \ar[l] V_1 \ar[r] & H_0'}$$
such that the cube with $\phi$ on top and $\phi'$ on bottom
commutes. Horizontal and vertical composition of squares in
$\bEnd(\Sp)$ are the horizontal and vertical compositions of the
underlying squares in $\Sp$, for example, horizontal composition is
defined via pullback.
\end{examp}

\begin{examp}[Monads in $\Sp$] \label{exa:Mnd(Span)}
% The other double category in the adjunction
% \eqref{equ:DiGraph_Cat_Double_Adjunction}, namely $\bMnd(\Sp)$, is the double
% category of monads in the double category of spans.
Objects and vertical
morphisms of $\bMnd(\Sp)$ are categories and functors.  The horizontal morphisms of $\bMnd(\Sp)$
are the same as Street's morphisms of monads in a 2-category
\cite{Street:formal-monads}.  Namely, a horizontal monad morphism $U\co C_*
\rightarrow D_*$ is a span $C_0 \leftarrow U_1 \rightarrow D_0$ and a square in
$\Sp$
$$\xymatrix{C_0 \ar@{=}[d] & \ar[l] U_1 \ar[r] & D_0 \ar@{}[d]|\phi & \ar[l] D_1 \ar[r] & D_0 \ar@{=}[d] \\
C_0 & \ar[l] C_1 \ar[r] & C_0 & \ar[l] U_1 \ar[r] & D_0}$$ such that
$$\vcomp{\left[\; 1^v_U \;\; \eta^D \; \right]}{\phi}=\left[\; \eta^C \;\; 1^v_U \;\right]$$
and
$$\begin{bmatrix}
& \phi & & 1^v_D \\
1^v_C & & \phi &\\
& \mu^C & & 1^v_U
\end{bmatrix} =
\vcomp{\left[\; 1^v_U \;\;  \mu^D \; \right]}{\phi}.$$ In other
words, we have a function $\phi \co U_1 \times_{D_0} D_1
\rightarrow C_1 \times_{C_0} U_1$ such that
\begin{equation} \label{equ:horizontal_mnd_mor_of_categories_0}
\phi(u,1_{tu})=(1_{su},u)
\end{equation}
for all $u \in U_1$ and
\begin{equation} \label{equ:horizontal_mnd_mor_of_categories_i}
\phi^C(\phi^U(u,d),d') \circ \phi^C(u,d)=\phi^C(u,d'\circ d)
\end{equation}
\begin{equation} \label{equ:horizontal_mnd_mor_of_categories_ii}
\phi^U(\phi^U(u,d),d')=\phi^U(u,d'\circ d).
\end{equation}
Note that if $D$ and $K$ have just one object, then equation
\eqref{equ:horizontal_mnd_mor_of_categories_ii} and the unit equation
\eqref{equ:horizontal_mnd_mor_of_categories_0} essentially say $\phi^U$ defines
a left monoid action of $D_1$ on $U_1$.  Horizontal composition of horizontal
morphisms in $\bMnd(\Sp)$ is by pullback, and the $\phi$-parts compose as in
equation \eqref{equ:phi_composition}. The horizontal identities are as in span, with $\phi$ the identity on $C_1$.

Finally, a square
\begin{equation} \label{equ:square_in_Mnd(Span)}
  \begin{array}{c}
\xymatrix{A_* \ar[r]^{(U,\phi)} \ar[d]_{(J_1,J_0)} \ar@{}[dr]|{\alpha} &
B_* \ar[d]^{(K_1,K_0)}
\\ C_* \ar[r]_{(V,\psi)} & D_*}
\end{array}
\end{equation}
in $\bMnd(\Sp)$ is a square $\alpha$ in $\Sp$ such that
$$\vcomp{\phi}{\left[\; J_1 \;\; \alpha \;\right]}=\vcomp{\left[\; \alpha \;\; K_1 \;\right]}{\psi},$$
in other words
$$(J_1(\phi^A(u,b)),\;\alpha(\phi^U(u,b)))=(\psi^C(\alpha(u),K(b)) ,\; \psi^V(\alpha(u),K(b)).$$
\end{examp}

\begin{rmk} \label{rmk:phi_as_commuting_past}
One way to think of a horizontal endomorphism map
$\phi$ is as an assignment that converts a path
$$\xymatrix{ \ar[r]^{\in U_1} & \ar[r]^{\in D_1}  & }$$ to a path
$$\xymatrix{ \ar[r]^{\in C_1} & \ar[r]^{\in U_1}  & }$$
in a way compatible with unit and composition.
\end{rmk}

Now that we understand the double categories involved,
we can give the proof of Proposition~\ref{freemonad_in_Span}.
Since the double adjunction
\eqref{equ:DiGraph_Cat_Double_Adjunction} is {\it vertical} rather
than horizontal, we use the transpose of the characterizations in
Theorem~\ref{thm:double_adjunction_def_iff_v_pseudo}. We cannot simply
transpose the double categories and double functors in
\eqref{equ:DiGraph_Cat_Double_Adjunction} in order to apply the
non-transposed Theorem~\ref{thm:double_adjunction_def_iff_v_pseudo},
because our notions of monads in a double category and their various
morphisms prefer the horizontal direction as distinguished.

\begin{proof}{Proof of Proposition~\ref{freemonad_in_Span}.}
We first describe $F$, then check the conditions of (transposed) Theorem~\ref{thm:double_adjunction_def_iff_v_pseudo}.
On objects and vertical morphisms (that is, on directed graphs and
their morphisms), $F$ is the free category functor.  On a horizontal morphism
$(U,\phi)\co G_* \rightarrow G_*'$ in $\bEnd(\Sp)$ as in
\eqref{equ:horizontal_morphism_in_End(Span)}, we have $F(U)_1:=U_1$.  The
function $\phi$ extends to $F(\phi)$ by Remark~\ref{rmk:phi_as_commuting_past}
and the fact that morphisms in the free category on a (non-reflexive) graph are
paths of edges.  On $F(U)_1 \times_{G_0'} G_1'$, the function $F(\phi)$ is
simply $\phi$.  On $F(U)_1 \times_{G_0'} F(G_*')_1$, the function $F(\phi)$ is
defined by moving the element of $U_1$ across the path, one edge at a time using
$\phi$.  For example,
\begin{equation} \label{equ:last_line_one}
\begin{array}{c}
\xymatrix@C=5pc{ \ar[r]^u & \ar[r]^g & \ar[r]^h & \\
\ar[r]^{\phi^G(u,g)} & \ar[r]^{\phi^U(u,g)} & \ar[r]^h & \\
\ar[r]^{\phi^G(u,g)} & \ar[r]^{\phi^G(\phi^U(u,g),h)} & \ar[r]^{\phi^U(\phi^U(u,g),h)} &  }
\end{array}
\end{equation}
which is the same as below.
\begin{equation} \label{equ:last_line_two}
\begin{array}{c}
\xymatrix@C=5pc{ \ar[r]^u & \ar[r]^{h\circ g} & \\
\ar[r]^{\phi^G(u, h \circ g)} & \ar[r]^{\phi^U(u,h\circ g)} &  }
\end{array}
\end{equation}
The equality of the composites in the last lines of the respective displays
\eqref{equ:last_line_one} and \eqref{equ:last_line_two} shows that $F(\phi)$
satisfies the composition rules in
\eqref{equ:horizontal_mnd_mor_of_categories_i} and
\eqref{equ:horizontal_mnd_mor_of_categories_ii} by definition.  Similarly,
\eqref{equ:horizontal_mnd_mor_of_categories_0} holds by definition and the fact
that our directed graphs are non-reflexive.  Concerning the definition of $F$ on
squares, the double functor $F$ takes a square $\alpha$ in $\bEnd(\Sp)$ as in
\eqref{equ:square_in_End(Span)} to the square $F\alpha$ in $\bMnd(\Sp)$ as in
\eqref{equ:square_in_Mnd(Span)} which has the same middle function $U_1
\rightarrow V_1$ as $\alpha$, but the left and right vertical morphisms are the
unique functors on the free categories that extend the directed graph morphisms
on the left and right of $\alpha$.  For this reason, $F$ clearly preserves
vertical composition of vertical morphisms and squares.  It also preserves
horizontal composition because the horizontal composition in both double
categories is defined via pullback.  Also the $\phi$ part of $F(V \circ U)$ is
the appropriate composite of the $\phi$-parts of $U$ and $V$ by an inductive
verification using the ``switching'' point of view on $\phi$ as just discussed.
Thus $F$ is a {\it strict} double functor.

\medskip

We use the transpose of the local description of double
adjunctions in
Theorem~\ref{thm:double_adjunction_def_iff_v_pseudo} to
prove that $F \dashv G$ is a vertical double adjunction. To simplify
our work with the transposed characterization, we introduce the
notations
$$\bMnd(\Sp)\begin{pmatrix}FU \\ V \end{pmatrix} \;\;\;\text{ and }\;\;\;
\bEnd(\Sp)\begin{pmatrix} U \\ GV \end{pmatrix}$$
to mean the set of squares in $\bMnd(\Sp)$ with vertical domain $FU$
and vertical codomain $V$, and the set of squares in $\bEnd(\Sp)$ with
vertical domain $U$ and vertical codomain $GV$. This notation is the
transpose of the notation in equation \eqref{hom_notation}. We
define a bijection
\begin{equation} \label{equ:End(Span)_Mor(Span)_bijection}
\varphi^U_V\co \bMnd(\Sp)\begin{pmatrix}FU \\ V \end{pmatrix}\xymatrix{
\ar[r] & }\bEnd(\Sp)\begin{pmatrix} U \\ GV \end{pmatrix}
\end{equation}
$$\begin{array}{c} \xymatrix@R=3pc@C=3pc{FA_* \ar[r]^{F(U,\phi)}
\ar[d]_{J} \ar@{}[dr]|{\alpha} & FB_* \ar[d]^{K}
\\ C_* \ar[r]_{(V,\psi)} & D_*} \end{array} \xymatrix{ \ar@{|->}[r] & }
\begin{array}{c}
\xymatrix@R=3pc@C=3pc{A_* \ar[r]^{(U,\phi)} \ar[d]_{J_{\text{res}}}
\ar@{}[dr]|{\alpha_{\text{res}}} & B_* \ar[d]^{K_{\text{res}}} \\ GC_*
\ar[r]_{G(V,\psi)} & GD_*} \end{array}$$ that is compatible with horizontal
composition.  The subscript $\text{res}$ means restriction: the maps
$J_{\text{res}}$ and $K_{\text{res}}$ are the restrictions of the functors $J$
and $K$ to the directed graphs $A_*$ and $B_*$, while $\alpha_{\text{res}}$ has
the same exact middle function $U_1 \rightarrow V_1$ as $\alpha$ does.  The
square $\alpha_{\text{res}}$ is restricted only in the sense that its horizontal
domain and codomain are restricted.  Since the middle function of $\alpha$ is
the same as that of $\alpha_{\text{res}}$, the function $\varphi^U_V$ is
manifestly injective.  If $\alpha'$ is a square in $\bEnd(\Sp)\begin{pmatrix} U
\\ GV \end{pmatrix}$, then we use the bijection $J \leftrightarrow
J_{\text{res}}$ to find the horizontal domain and codomain of
$(\varphi^U_V)^{-1}(\alpha')$, and define the middle function of
$(\varphi^U_V)^{-1}(\alpha')$ to be that of $\alpha'$.  This proves the
surjectivity of $\varphi^U_V$.

To see that $\varphi(\left[\; \alpha  \; \; \beta \;
\right])=\left[\; \varphi(\alpha) \; \; \varphi(\beta) \; \right]$,
we only need to observe that $(\alpha \times_{K_0} \beta)_{\text{res}}$ is
the same as $\alpha_{\text{res}} \times_{(K_\text{res})_0} \beta_{\text{res}}$ because the
diagrams, from which we are forming the pullbacks, are exactly the
same. Namely,
$$\xymatrix{(FA_*)_0 \ar[d]_{J_0} & \ar[l] \ar[d]_\alpha F(U)_1
\ar[r] & (FB_*)_0 \ar[d]^{K_0} & \ar[l] F(W)_1
\ar[r] \ar[d]^\beta & (FH_*)_1 \ar[d]^{L_0}
\\ C_0 & \ar[l] V_1 \ar[r] & D_0 & \ar[l] X_1 \ar[r] & D_0}$$
is exactly the same as
$$\xymatrix{A_0 \ar[d]_{(J_{\text{res}})_0} & \ar[l] \ar[d]_{\alpha_{\text{res}}} U_1
\ar[r] & B_0 \ar[d]^{(K_{\text{res}})_0} & \ar[l]
W_1 \ar[r] \ar[d]^{\beta_{\text{res}}} & H_1 \ar[d]^{(L_{\text{res}})_0}
\\ (GC_*)_0 & \ar[l] (GV)_1 \ar[r] & (GD_*)_0 & \ar[l] (GX)_1 \ar[r] & (GD_*)_0.}$$
%The bijection $\varphi$ also maps coherence isos to coherence isos
%because $\alpha$ and $\alpha_{\text{res}}$ have the same middle map. For
%example,
%$$\varphi\left(\begin{array}{c} \xymatrix@C=4pc{\ar[r]^{\left[\; \left[\; U_1 \; \; U_2 \;\right] \;\; U_3 \;\right]}
%\ar@{=}[d] \ar@{}[dr]|{\text{coh}} & \ar@{=}[d]  \\ \ar[r]_{\left[\;
%U_1 \; \;\left[ U_2 \;\; U_3 \; \right]\;\right]} & }\end{array}
%\right)=\begin{array}{c} \xymatrix@C=4pc{\ar[r]^{\left[\; \left[\;
%\varphi U_1 \; \; \varphi U_2 \;\right] \;\; \varphi U_3 \;\right]}
%\ar@{=}[d] \ar@{}[dr]|{\text{coh}} & \ar@{=}[d]  \\ \ar[r]_{\left[\;
%\varphi U_1 \; \;\left[ \varphi U_2 \;\; \varphi U_3 \;
%\right]\;\right]} & }\end{array}$$

It only remains to check the naturality of $\varphi^U_V$ in $U$ and
$V$, but that is similar to the naturality of the ordinary free
category functor-forgetful functor adjunction, the only difference
is that here we use {\it vertical} pre- and post-composition of {\it
squares}.

In summary, the bijection $\phi^U_V$ in
\eqref{equ:End(Span)_Mor(Span)_bijection} is compatible with
horizontal composition and natural in the horizontal morphisms $U$
and $V$, so $F$ is vertical double left adjoint to $G$ by the transpose of
Theorem~\ref{thm:double_adjunction_def_iff_v_pseudo}.
\end{proof}

In the next section we analyze the free-monad adjunction in
a more general setting.  In Section~\ref{sec:Characterization_of_Existence_of_Eilenberg-Moore_Objects}
we study another important example of double adjunction, namely an
Eilenberg--Moore type adjunction.

% Another important example is the horizontal double adjunction
% between the ``underlying'' and the inclusion double functor
% \begin{equation*}
% \xymatrix@C=4pc{\bMnd(\bbD) \ar@/^1pc/[r]^{\Und} \ar@{}[r]|{\perp} &
% \ar@/^1pc/[l]^{\Inc_\bbD} \bbD.}
% \end{equation*}
% The inclusion double functor $\Inc_\bbD$ always admits $\Und$ as a
% horizontal left double adjoint, but $\Inc_\bbD$ may or may not admit
% a right double adjoint. When $\Inc_\bbD$ admits a horizontal right double
% adjoint, we say that $\bbD$ {\it admits Eilenberg--Moore objects}. We
% will return to this topic in
% Section~\ref{sec:Characterization_of_Existence_of_Eilenberg-Moore_Objects}.
% Next we consider double adjoints to the forgetful double functor
% from monads in $\bbD$ to endomorphisms in $\bbD$.

\section{Free Monads in Double Categories with Cofolding}
\label{sec:Construction_of_Free_Monads}

In this section we remove several hypotheses from our main theorem
in~\cite{FioreGambinoKock:DoubleMonadsI} and strengthen its conclusion to obtain
Theorem~\ref{thm:existence_of_free_monads},
which says that if a double category $\bbD$ with cofolding admits the
construction of free monads in its horizontal 2-category, then $\bbD$ admits the
construction of free monads as a double category.
Since the free--forgetful double adjunction is a {\em vertical} adjunction,
it is remarkable that it can be inferred from the free--forgetful adjunction
in the {\em horizontal} $2$-category.
We first recall free monads
on endomorphisms in a 2-category in
Definition~\ref{def:admits_construction_of_free_monads:2-category_version},
which is due to Staton~\cite[Theorem~6.1.5]{StatonS:namppc} in the case
$\mathbf{K}=\mathbf{Cat}$, and is treated in general in our previous paper
\cite[Theorem~1.1]{FioreGambinoKock:DoubleMonadsI}.

\begin{defn} \label{def:admits_construction_of_free_monads:2-category_version}
Let $\mathbf{K}$ be a 2-category. We say $\mathbf{K}$ {\it admits
the construction of free monads} if either of the two following equivalent
conditions hold.
\begin{enumerate}
\item
\label{def:admits_construction_of_free_monads:2-category_version:universal_counit_component}
For every endomorphism $(Y,Q)$ there exists a monad $(Y,Q^\text{\rm free})$ and a
2-cell $\iota \co Q \rightarrow Q^\text{\rm free}$ in $\mathbf{K}$ such that the
endomorphism map $(1_Y,\iota_Q)\co (Y,Q^\text{\rm free}) \rightarrow (Y,Q)$ is
universal in the sense that for every monad $(X,P)$,
post-composition with $(1_Y,\iota_Q)$ induces an isomorphism of
categories
$$
\xymatrix@C=5pc{\Mnd_{\mathbf{K}} ((X,P),(Y,Q^\text{\rm free})) \ar[r]^-{(1_Y,\iota_Q) \circ U(-)} & \End_{\mathbf{K}}(U(X,P),(Y,Q)),}
$$
where $U\co \Mnd(\mathbf{K}) \rightarrow \End(\mathbf{K})$ is the
forgetful 2-functor.
\item
The forgetful functor $U\co \Mnd(\mathbf{K}) \rightarrow
\End(\mathbf{K})$ admits a {\it right} 2-adjoint $R\co
\End(\mathbf{K}) \rightarrow \Mnd(\mathbf{K})$ with a counit
$\varepsilon$ such that the underlying morphism in $\mathbf{K}$ of
each counit component $\varepsilon_{(Y,Q)} \co UR(Y,Q) \rightarrow (Y,Q)$
is $1_Y$.
\end{enumerate}
\end{defn}

%\begin{defn} \label{def:admits_construction_of_free_monads:2-category_version}
%Let $\mathbf{K}$ be a 2-category. We say $\mathbf{K}$ {\it admits
%the construction of free monads} if the forgetful functor $U\co
%\Mnd(\mathbf{K}) \rightarrow \End(\mathbf{K})$ admits a {\it
%right} adjoint. That is, there exists a 2-functor $L\co
%\End(\mathbf{K}) \rightarrow \Mnd(\mathbf{K})$ and a natural
%isomorphism of categories
%\begin{equation} \label{equ:admits_construction_of_free_monads:2-category_version}
%\xymatrix{\Mnd(\mathbf{K})((X,P),L(Y,Q)) \ar[r] &
%\End(\mathbf{K})(U(X,P),(Y,Q)).}
%\end{equation}
%\end{defn}

\begin{rmk}
The reason Definition~\ref{def:admits_construction_of_free_monads:2-category_version}
requires a {\it right} adjoint to the forgetful functor (as opposed
to an expected {\it left} adjoint) is the choice of the direction of
2-cell in the definition of endomorphism map and monad map, as we
now explain.  Briefly, this right adjoint restricts to a left adjoint when we
consider monads and endomorphisms on a fixed object $Y$.
In detail,
consider a fixed object $Y$ of the 2-category $\mathbf{K}$. The
{\it category of endomorphisms on $Y$}, denoted $\End(Y)$, has
objects endomorphisms on $Y$.  The morphisms in $\End(Y)$ are
endomorphism maps with underlying morphism the identity on $Y$, that
is, endomorphism maps of the form $(1_Y,\phi)\co (Y,Q_1)
\rightarrow (Y,Q_2)$. We follow the convention of Street~\cite{Street:formal-monads}
for the 2-cell $\phi$, namely $\phi\co Q_2 1_Y \rightarrow 1_Y Q_1$.
There are no compatibility requirements on
$\phi$. The {\it category of monads on $Y$}, denoted $\Mnd(Y)$, has
objects monads on $Y$. The morphisms in $\Mnd(Y)$ are monad maps
with underlying morphism the identity on $Y$, that is, morphisms are
monad maps of the form $(1_Y,\psi)\co (Y,M_1) \rightarrow
(Y,M_2)$.  Again, we follow Street's convention in
\cite{Street:formal-monads} for the 2-cell $\psi$, namely $\psi:M_2
1_Y \rightarrow 1_Y M_1$. The 2-cell $\psi$ is required to be
compatible with the unit and multiplication of the monads $M_1$ and
$M_2$.

The variance in
Definition~\ref{def:admits_construction_of_free_monads:2-category_version}
restricts to the expected one for monads on the fixed object $Y$,
that is, the 2-category $\mathbf{K}$ {\it is said to admit the
construction of free monads on $Y$} if the forgetful functor $U_Y
\co \Mnd(Y) \rightarrow \End(Y)$ admits a {\it left} adjoint. If
$\mathbf{K}$ admits the construction of free monads in the sense of
Definition~\ref{def:admits_construction_of_free_monads:2-category_version},
then $\mathbf{K}$ admits the construction of free monads on each
object $Y$.
\end{rmk}

\begin{rmk}
In
Definition~\ref{def:admits_construction_of_free_monads:2-category_version}~\ref{def:admits_construction_of_free_monads:2-category_version:universal_counit_component},
the isomorphism of categories commutes with the evident forgetful functors
\begin{equation*} \label{equ:free_monads_in_a_2-category:diagram_over_K}
\xymatrix{ \Mnd(\mathbf{K})( (X,P), (Y, Q^\text{\rm free}) ) \ar[rr]^\cong
\ar[dr] & & \End(\mathbf{K})( U(X,P), (Y,Q)) \ar[dl] \\
 & \mathbf{K}(X,Y), & }
\end{equation*}
since the underlying morphisms and 2-cells in $\mathbf{K}$ are composed with
(whiskered with) $1_Y$.
\end{rmk}

%\begin{rmk} \label{rmk:construction_of_free_monads_in_a_2-category_over_K}
%If $\mathbf{K}$ admits the construction of free monads, then there
%exists a right adjoint $L$ such that the isomorphism in equation
%\eqref{equ:admits_construction_of_free_monads:2-category_version}
%preserves underlying morphisms, that is the diagram
%
%commutes, where the downward arrows are the evident forgetful
%functors. In particular, the free monad lives on the same object as
%the original endomorphism, that is, we have $L(Y,Q)=(Y,Q^*)$.
%Moreover, for each endomorphism $(Y,Q)$ there is a 2-cell $\iota_Q
%\co Q \rightarrow Q^*$ in $\mathbf{K}$ such that for every
%monad~$(X,P)$, composition with the endomorphism map $(1_Y,
%\iota_Q)\co U(Y,Q^*) \rightarrow (Y,Q)$ induces the isomorphism
%in \eqref{equ:free_monads_in_a_2-category:diagram_over_K}.
%\end{rmk}

The following definition is slightly different from
\cite[Definition~2.8]{FioreGambinoKock:DoubleMonadsI} in that it
insists on the vertical triviality of the unit.
% In Theorem~\ref{thm1:horizontal_construction_of_monads_implies_all_others} we
% then prove that if a double category $\bbD$ with cofolding admits the
% construction of free monads in its horizontal 2-category, then the horizontal
% 2-adjunction extends to a horizontal double adjunction as well as a vertical
% double adjunction, and $\bbD$ admits the construction of free monads as a double
% category.
\begin{defn} \label{def:a_double_category_admits_the_construction_of_free_monads}
A double category $\bbD$ is said to {\it admit the construction of free monads}
if the forgetful double functor $U \co \bMnd(\bbD) \rightarrow \bEnd(\bbD)$
admits a vertical {\em left} double adjoint $R$ with a unit $\eta$ such that the
underlying vertical morphism in $\bbD$ of each unit component $\eta_{(Y,Q)} \co
(Y,Q) \rightarrow UR(Y,Q)$ is $1_Y^v$.
\end{defn}

We shall shortly prove that if $\bbD$ has a cofolding, then the existence of
free monads in $\bfH\bbD$ implies the existence of free monads in $\bbD$.  This
amounts to extending an adjunction from the horizontal $2$-categories to a {\em
vertical} double adjunction.  We first extend the $2$-adjunction of horizontal
$2$-categories to a horizontal double adjunction.
For both results, observe that the double-categorical notions of endomorphism,
monad, and the forgetful double functor $U \co \bMnd(\bbD) \to \bEnd(\bbD)$ are
essentially notions of the horizontal $2$-category.  More precisely we can
identify $\bfH U \co \bfH \bMnd(\bbD) \to \bfH \bEnd(\bbD)$ with the forgetful
$2$-functor $\Mnd(\bfH\bbD) \to \End(\bfH \bbD)$.

\begin{prop}\label{prop:free_to_horizontal_adj}
  Let $\bbD$ be a double category with cofolding $\Lambda$.  Suppose that the
  horizontal 2-category $\bfH\bbD$ admits the construction of free
  monads in the sense of
  Definition~\ref{def:admits_construction_of_free_monads:2-category_version}.
  Then the $2$-adjunction
\begin{equation*}
\xymatrix@C=4pc{ \Mnd(\bfH\bbD) \ar@/^1pc/[r]^{U} \ar@{}[r]|\perp & \ar@/^1pc/[l]^{R} \End(\bfH\bbD)}
\end{equation*}
extends to a horizontal double adjunction
\begin{equation*}
\xymatrix@C=4pc{\bMnd(\bbD) \ar@/^1pc/[r]^{U} \ar@{}[r]|\perp &
\ar@/^1pc/[l]^{R} \bEnd(\bbD) .}
\end{equation*}
\end{prop}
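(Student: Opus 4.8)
The plan is to promote the given $2$-functor $R\co\End(\bfH\bbD)\to\Mnd(\bfH\bbD)$ to a double functor $R\co\bEnd(\bbD)\to\bMnd(\bbD)$ which is compatible with the cofoldings inherited from $\bbD$, and then to invoke Proposition~\ref{prop:folding_double_adjunction_iff_horizontal_double_adjunction}. Throughout I would use the identifications $\bfH\bEnd(\bbD)=\End(\bfH\bbD)$, $\bfH\bMnd(\bbD)=\Mnd(\bfH\bbD)$, and $\bfH U={}$(forgetful $2$-functor), together with Proposition~\ref{prop:cofolding_on_D_induces_cofolding_on_Mnd(D)_and_on_End(D)}: the coholonomies of $\bEnd(\bbD)$ and $\bMnd(\bbD)$ send a vertical (endo- or monad-)map $(u,\bar u)$ to the horizontal map $(u^*,\Lambda^\bbD(\bar u))$, the double functor $U$ is compatible with the cofoldings, and Proposition~\ref{prop:bijection_for_fixed_underlying_morphism} identifies each coholonomy, restricted to maps over a fixed $u$, with a bijection.

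First I would record that $R$ preserves underlying morphisms in $\bfH\bbD$: for a $1$-morphism $(g,\phi)$ of $\End(\bfH\bbD)$ with underlying morphism $g$, the $1$-morphism $R(g,\phi)$ again has underlying morphism $g$. This is immediate from the $2$-naturality of the counit $\varepsilon\co(\bfH U)R\Rightarrow 1$ together with the hypothesis (Definition~\ref{def:admits_construction_of_free_monads:2-category_version}) that each $\varepsilon_{(Y,Q)}$ has underlying morphism $1_Y$: comparing underlying morphisms in the naturality square of $\varepsilon$ at $(g,\phi)$ yields $g$ on one side and the underlying morphism of $R(g,\phi)$ on the other.

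Next I would build the double functor, still denoted $R$. On objects, on horizontal morphisms, and on squares with trivial vertical boundary (the $2$-cells of $\bfH\bEnd(\bbD)$) it is the given $R$. On a vertical morphism $j$ of $\bEnd(\bbD)$ with underlying vertical morphism $u$ of $\bbD$: its coholonomy $j^*$ is a horizontal endomorphism map with underlying horizontal morphism $u^*$, hence $R(j^*)$ is a horizontal monad map with underlying morphism $u^*$ by the previous step, and Proposition~\ref{prop:bijection_for_fixed_underlying_morphism} then supplies a unique vertical monad map $R(j)$ with underlying morphism $u$ and $R(j)^*=R(j^*)$. On a general square $\alpha$ of $\bEnd(\bbD)$ I set $R(\alpha):=(\Lambda^{\bMnd(\bbD)})^{-1}\bigl(R(\Lambda^{\bEnd(\bbD)}(\alpha))\bigr)$; this is well posed because the boundary $1$-morphisms of $R(\Lambda^{\bEnd(\bbD)}(\alpha))$ are exactly $R((s^v\alpha)^*)=R(s^v\alpha)^*$ and $R((t^v\alpha)^*)=R(t^v\alpha)^*$ (and the images of the horizontal boundaries of $\alpha$), which is precisely the boundary produced by $\Lambda^{\bMnd(\bbD)}$ from a square with vertical sides $R(s^v\alpha),R(t^v\alpha)$. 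That this $R$ is a double functor is then a routine but somewhat lengthy verification: preservation of horizontal composition and horizontal units comes from the $2$-functoriality of $R$ and axiom~\ref{def:cofolding:Lambdahorizontal} of the cofolding; functoriality on vertical morphisms comes from comparing coholonomies and invoking the uniqueness in Proposition~\ref{prop:bijection_for_fixed_underlying_morphism} (using the contravariance of the coholonomies); and preservation of vertical composition of squares and of vertical units comes from axioms~\ref{def:cofolding:Lambdavertical} and \ref{def:cofolding:Lambdaidentity}. By construction $\bfH R$ is the original $R$, and $R$ is compatible with the cofoldings, since $R(j)^*=R(j^*)$ for every vertical $j$ and $\Lambda^{\bMnd(\bbD)}(R\alpha)=R(\Lambda^{\bEnd(\bbD)}(\alpha))$ for every square $\alpha$.

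Finally, $U\co\bMnd(\bbD)\to\bEnd(\bbD)$ and $R\co\bEnd(\bbD)\to\bMnd(\bbD)$ are now double functors between double categories with cofolding, both compatible with the cofoldings, and $\bfH U\dashv\bfH R$ is the given $2$-adjunction; Proposition~\ref{prop:folding_double_adjunction_iff_horizontal_double_adjunction} (the cofolding case) then yields a horizontal double adjunction $U\dashv R$ extending it. One could instead bypass Proposition~\ref{prop:folding_double_adjunction_iff_horizontal_double_adjunction} and feed $R$ on objects and vertical morphisms, together with a counit defined on objects by the $2$-adjunction counit and on vertical morphisms of $\bEnd(\bbD)$ by the cofolding-identity squares (well defined precisely by the $2$-naturality of $\varepsilon$), directly into Theorem~\ref{thm:double_adjunction_descriptions}~\ref{Funiversalarrow}; its universality clause then reduces, via the cofolding bijections, to the universal property of $\varepsilon$. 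The main obstacle is the middle step: making the extension $R$ well defined on squares and checking it is a genuine double functor. All of that bookkeeping is carried by the axioms of Definition~\ref{def:cofolding} for the two induced cofoldings and by the relative full faithfulness of Proposition~\ref{prop:bijection_for_fixed_underlying_morphism}; once $R$ is in hand and seen to be compatible with the cofoldings, the conclusion is automatic.
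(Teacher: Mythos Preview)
Your proposal is correct and follows essentially the same route as the paper's proof: extend $R$ to vertical morphisms and squares via the induced cofoldings on $\bEnd(\bbD)$ and $\bMnd(\bbD)$, using that $R$ preserves underlying morphisms (because the counit components are identities on underlying morphisms) together with Proposition~\ref{prop:bijection_for_fixed_underlying_morphism}, and then invoke Proposition~\ref{prop:folding_double_adjunction_iff_horizontal_double_adjunction}. The paper is somewhat terser about the double-functor verification you sketch, but the ingredients and logical structure are the same.
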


\begin{proof}
  By the above remark, $U$ automatically extends to a double functor.  The main
  point is to extend $R$, which relies on the cofoldings on $\bEnd(\bbD)$ and
  $\bMnd(\bbD)$ guaranteed by
  Proposition~\ref{prop:cofolding_on_D_induces_cofolding_on_Mnd(D)_and_on_End(D)},
  and the crucial fact that the counit of the $2$-adjunction $U \dashv R$ has
  components of the form $\varepsilon_{(Y,Q)}=(1_Y^h,\iota_Q)$.  The 2-functor
  $R$ is defined on (horizontal) endomorphism maps $(F,\phi)\co (X,P)
  \rightarrow (Y,Q)$ and endomorphism 2-cells $\alpha \co (F_1,\phi_1)
  \Rightarrow (F_2,\phi_2)$ by the equations
\begin{equation} \label{equ:thm:horizontal_construction_of_monads_implies_all_others:morphisms}
\left[\; UR(F,\phi) \;\; (1_Y^h,\iota_Q) \right] = \left[ \; (1_X^h,\iota_P) \;\; (F,\phi) \; \right]
\end{equation}
\begin{equation} \label{equ:thm:horizontal_construction_of_monads_implies_all_others:2-cells}
\left[\; UR\alpha \;\; i^v_{(1_Y^h,\iota_Q)} \right] = \left[ \; i^v_{(1_X^h,\iota_P)} \;\; \alpha \; \right].
\end{equation}
If $(u,\overline{u})$ is a vertical endomorphism map, then
$R(u^*,\Lambda(\overline{u}))=:(Ru^*,R\Lambda(\overline{u}))$ is defined by
\eqref{equ:thm:horizontal_construction_of_monads_implies_all_others:morphisms}.
We see from
\eqref{equ:thm:horizontal_construction_of_monads_implies_all_others:morphisms}
that the underlying horizontal morphism of $Ru^*$ is $u^*$, so by
Proposition~\ref{prop:bijection_for_fixed_underlying_morphism} we may apply
$\Lambda^{-1}$ to $R\Lambda(\overline{u})$ to obtain
$R(u,\overline{u}):=(u,\Lambda^{-1}R\Lambda(\overline{u}))$ with underlying
vertical morphism $u$.  A similar argument using
equation~\eqref{equ:thm:horizontal_construction_of_monads_implies_all_others:2-cells}
defines $R$ on squares of $\bEnd(\bbD)$.
By construction, the double functors $R$ and $U$ are compatible with the
cofoldings, so the 2-adjunction $\bfH U \dashv \bfH R$ extends to a horizontal
double adjunction by
Proposition~\ref{prop:folding_double_adjunction_iff_horizontal_double_adjunction}.
\end{proof}

\begin{thm}[Reduction of construction of free monads to horizontal 2-category]
  \label{thm:existence_of_free_monads}
Let $\bbD$ be a double category with cofolding.  If the horizontal 2-category
$\bfH\bbD$ admits the construction of free monads in the sense of
Definition~\ref{def:admits_construction_of_free_monads:2-category_version}, then
the double category $\bbD$ admits the construction of free monads in the sense
of
Definition~\ref{def:a_double_category_admits_the_construction_of_free_monads}.
\end{thm}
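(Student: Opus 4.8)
The plan is to upgrade the \emph{horizontal} double adjunction produced by Proposition~\ref{prop:free_to_horizontal_adj} to a \emph{vertical} one, using the cofoldings that Proposition~\ref{prop:cofolding_on_D_induces_cofolding_on_Mnd(D)_and_on_End(D)} puts on $\bEnd(\bbD)$ and $\bMnd(\bbD)$. First I would invoke Proposition~\ref{prop:free_to_horizontal_adj}: from the hypothesis that $\bfH\bbD$ admits the construction of free monads it yields a horizontal double adjunction $U \dashv R$ between $\bMnd(\bbD)$ and $\bEnd(\bbD)$ in which $U$ and $R$ are compatible with the cofoldings and whose counit $\varepsilon \co UR \Rightarrow 1_{\bEnd(\bbD)}$ has components $\varepsilon_{(Y,Q)} = (1^h_Y, \iota_Q)$; in particular the underlying horizontal morphism of $\varepsilon_{(Y,Q)}$ is $1^h_Y = (1^v_Y)^*$. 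By the transpose of Theorem~\ref{thm:double_adjunction_descriptions}~\ref{FGunituniversal}, it then suffices to produce a vertical natural transformation $\eta \co 1_{\bEnd(\bbD)} \Rightarrow UR$, with each component $\eta_{(Y,Q)}$ having underlying vertical morphism $1^v_Y$, and such that for every horizontal endomorphism map $f$ the square $\eta_f$ is vertically universal from $f$ to $U$ (in the sense transpose to Definition~\ref{def:horizontally_universal_square}); this is exactly the data required in Definition~\ref{def:a_double_category_admits_the_construction_of_free_monads}.

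The unit $\eta$ should be built from $\varepsilon$ by transport along the cofoldings. On an object $(Y,Q)$, since $\varepsilon_{(Y,Q)}$ is a horizontal endomorphism map with underlying horizontal morphism $(1^v_Y)^*$, Proposition~\ref{prop:bijection_for_fixed_underlying_morphism} applied to the fixed vertical morphism $1^v_Y$ of $\bbD$ delivers a unique vertical endomorphism map $\eta_{(Y,Q)} \co (Y,Q) \to UR(Y,Q)$ whose underlying vertical morphism is $1^v_Y$, and one has $(\eta_{(Y,Q)})^* = \varepsilon_{(Y,Q)}$. On a horizontal endomorphism map $f \co (X,P) \to (Y,Q)$, I would define the square $\eta_f$ --- with boundary $f$ on top, $URf$ on the bottom, $\eta_{(X,P)}$ and $\eta_{(Y,Q)}$ on the two sides --- by requiring that its $\bEnd(\bbD)$-cofolding equal the trivial square $i^v_{[\,\varepsilon_{(X,P)}\ f\,]}$; this is well posed because the $2$-naturality of $\bfH\varepsilon$ in $\bfH\bEnd(\bbD)$ forces the two boundary composites $[\,\varepsilon_{(X,P)}\ f\,]$ and $[\,URf\ \varepsilon_{(Y,Q)}\,]$ to coincide. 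This is just the cofolding's prescription for turning a $2$-categorical naturality datum into a double-categorical one (compare Definition~\ref{def:morphismwithcofolding}), and I would then verify that the family $(\eta_{(Y,Q)}, \eta_f)$ is genuinely a vertical natural transformation $1_{\bEnd(\bbD)} \Rightarrow UR$, using the cofolding axioms of Definition~\ref{def:cofolding}, the horizontal naturality of $\varepsilon$, the contravariant functoriality of the coholonomies, and faithfulness of the coholonomy on $\bEnd(\bbD)$ (which follows from that on $\bbD$).

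The remaining, and central, step is to show that each $\eta_f$ is vertically universal from $f$ to $U$: that $\beta' \mapsto \vcomp{\eta_f}{U\beta'}$ is a bijection from the squares of $\bMnd(\bbD)$ with top $Rf$ and bottom $h$ onto the squares of $\bEnd(\bbD)$ with top $f$ and bottom $Uh$, for every horizontal monad map $h$. I would obtain this by transporting the horizontal universality of the counit --- valid by Theorem~\ref{thm:double_adjunction_descriptions}~\ref{FGcounituniversal} applied to $U \dashv R$ --- through the cofoldings of $\bEnd(\bbD)$ and $\bMnd(\bbD)$, once more using Proposition~\ref{prop:bijection_for_fixed_underlying_morphism} to pass between vertical maps over a fixed underlying morphism of $\bbD$ and horizontal maps over its coholonomy. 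Granting this, the transpose of Theorem~\ref{thm:double_adjunction_descriptions}~\ref{FGunituniversal} exhibits $R$ as a vertical left double adjoint to $U$ with unit $\eta$, and since every $\eta_{(Y,Q)}$ has underlying vertical morphism $1^v_Y$, the double category $\bbD$ admits the construction of free monads in the sense of Definition~\ref{def:a_double_category_admits_the_construction_of_free_monads}.

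I expect the main obstacle to be conceptual more than computational: the adjunction we already have is \emph{horizontal} with $U$ on the left, while the one we want is \emph{vertical} with $U$ on the right, so the two run in opposite directions and are not related by a mere transposition. What bridges them is the contravariance built into the cofolding together with the decisive fact that the counit components $\varepsilon_{(Y,Q)} = (1^h_Y, \iota_Q)$ are trivial on the underlying object --- precisely what lets Proposition~\ref{prop:bijection_for_fixed_underlying_morphism} convert $\varepsilon$ into a unit $\eta$ that is trivial on the underlying vertical morphism. The genuine technical delicacy is confined to the universality step: because $\bEnd(\bbD)$ and $\bMnd(\bbD)$ need \emph{not} have fully faithful coholonomies, Corollary~\ref{cor:fully_faithful_coholonomy_implies_all_adjunctions_equivalent} cannot be invoked directly, and the cofolding bijections become usable only after fixing an underlying morphism of $\bbD$; one must then carry along the composite boundaries $[\,j^*\ f\,]$ and $[\,g\ k^*\,]$ produced by the cofoldings while checking compatibility with horizontal composition of squares.
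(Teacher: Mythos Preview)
Your proposal is correct and follows essentially the same route as the paper: build $\eta$ from the horizontal counit $\varepsilon$ via the cofolding and Proposition~\ref{prop:bijection_for_fixed_underlying_morphism}, then transport horizontal universality to vertical universality through the cofoldings on $\bEnd(\bbD)$ and $\bMnd(\bbD)$. Two small remarks: the paper invokes the transpose of characterization~\ref{Guniversalarrow} rather than~\ref{FGunituniversal} (so it only needs $\eta$ as a functor, not as a full vertical natural transformation, sidestepping your naturality verification), and your appeal to ``faithfulness of the coholonomy on $\bEnd(\bbD)$'' is not available in general---the coholonomy on $\bbD$ is not assumed faithful---but you do not actually need it, since the square-level bijectivity of the cofolding together with Proposition~\ref{prop:bijection_for_fixed_underlying_morphism} already does the work.
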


\begin{proof}
  By Proposition~\ref{prop:free_to_horizontal_adj} the $2$-functor $R$ of
  Definition~\ref{def:admits_construction_of_free_monads:2-category_version}
  extends
  to a double functor $R\co\bEnd(\bbD)\to \bMnd(\bbD)$.  We shall check that $R$ is
  vertical left double adjoint to $U \co \bMnd(\bbD) \rightarrow \bEnd(\bbD)$
  using the transpose of
  Theorem~\ref{thm:double_adjunction_descriptions}~\ref{Guniversalarrow}, which
  requires functors
$$\xymatrix{R_0\co
(\Obj \bEnd(\bbD), \Hor \bEnd(\bbD)) \ar[r] & (\Obj \bMnd(\bbD),
\Hor \bMnd(\bbD))}$$
$$\xymatrix{\eta \co (\Obj \bEnd(\bbD), \Hor \bEnd(\bbD)) \ar[r] &
(\Ver \bEnd(\bbD), \Sq \bEnd(\bbD))}$$
such that for each horizontal morphism $(F,\phi)$ in $\bEnd(\bbD)$
the square $\eta_{(F,\phi)}$ is of the form $$\xymatrix@C=4pc{(X,P)
\ar[r]^-{(F,\phi)} \ar[d]_-{\eta_{(X,P)}}
\ar@{}[dr]|-{\eta_{(F,\phi)}}  &  (Y,Q)  \ar[d]^-{\eta_{(Y,Q)}} \\
UR_0(X,P) \ar[r]_{UR_0(F,\phi)} & UR_0(Y,Q)}$$ and is universal from
$(F,\phi)$ to $U$.

We define $R_0$ as the horizontal 1-adjoint already present, namely
$R_0(X,P):=(X,P^\text{\rm free})$ and $R_0(F,\phi)\co (X,P^\text{\rm free})
\rightarrow (Y,Q^\text{\rm free})$ is the unique (horizontal) monad morphism
such that $(1_Y^h,\iota_Q) \circ UR_0(F,\phi) = (F,\phi) \circ (1_X^h\iota_P)$.

The functor $\eta$ on objects is $\eta_{(X,P)}:=(1^v_{X},
\left(\Lambda^{\bbD}\right)^{-1}(\iota_P))=(1^v_{X}, \iota_P)$.  Here
$\Lambda^{\bbD}$ is the cofolding on $\bbD$, and we are using
Proposition~\ref{prop:cofolding_on_D_induces_cofolding_on_Mnd(D)_and_on_End(D)}
for the cofolding on $\bEnd(\bbD)$, the bijection in
Proposition~\ref{prop:bijection_for_fixed_underlying_morphism} for the fixed
vertical morphism $(1^v_X)$, and the fact that $(1^v_X)^*=1^h_X$.  For a
horizontal endomorphism map $(F,\phi)$, we define $\eta_{(F,\phi)}$ to be
$\left(\Lambda^{\bEnd(\bbD)}\right)^{-1}$ of the vertical identity square
\[
\xymatrix@C=4pc{UL_0(X,P) \ar[r]^{(1_X^h,\iota_P)}  \ar@{=}[d] & (X,P)
\ar[r]^{(F,\phi)}  \ar@{}[d]|{i^v} & (Y,Q) \ar@{=}[d]
\\ UL_0(X,P)  \ar[r]_-{UL_0(F,\phi)} & UL_0(Y,Q) \ar[r]_-{(1_Y^h,\iota_Q)} & (Y,Q)}
\]
in $\bEnd(\bbD)$.

For the universality of $\eta_{(Y,Q)}$ concerning vertical morphisms, we must
prove for each endomorphism $(Y,Q)$ and each monad $(X,P)$ that
$$\xymatrix@C=5pc{\text{Ver}_{\bMnd(\bbD)}(Y,Q^\text{\rm free}),(X,P))
\ar[r]^-{U(-) \circ (1^v_Y,\iota_Q)} & \text{Ver}_{\bEnd(\bbD)}((Y,Q),U(X,P)) }$$
is a bijection. For injectivity, if
$U(u,\overline{u})\circ(1^v_Y,\iota_Q)=U(v,\overline{v})\circ(1^v_Y,\iota_Q)$,
then $u=v$, and the coholonomy on $\bEnd(\bbD)$ gives us
$$(1^h_Y,\iota_Q) \circ U(u^*,\Lambda(\overline{u}))=(1^h_Y,\iota_Q)
\circ U(v^*,\Lambda(\overline{v})),$$ so $\Lambda(\overline{u})=\Lambda(\overline{v})$
by horizontal universality of $(1^h_Y,\iota_Q)$. Finally, $\overline{u}=\overline{v}$
by Proposition~\ref{prop:bijection_for_fixed_underlying_morphism}. For surjectivity,
if $(w, \overline{w})\co(Y,Q) \rightarrow U(X,P)$ is a vertical endomorphism map,
the horizontal universality of $(1^h_Y,\iota_Q)$ guarantees a horizontal monad map
$(F,\phi)\co (X,P) \rightarrow (Y,Q^\text{\rm free})$ such that
$(1^h_Y,\iota_Q) \circ U(F,\phi)=(w^*,\Lambda(\overline{w}))$.
Then $F=w^*$, and we may take $(u,\overline{u})=(w,\Lambda^{-1}( \left[\; \phi
\;\;  \iota_Q \;\right])$ so that $U(u,\overline{u})\circ(1^v_Y,\iota_Q)=(w,\overline{w})$,
again by Proposition~\ref{prop:bijection_for_fixed_underlying_morphism}.

We next prove that the square $\eta_{(F,\phi)}$ is vertically
universal, that is, the map
\begin{equation} \label{equ:vertical_universality_in_proof_free_horizontal_implies_free_vertical}
\bMnd(\bbD)\begin{pmatrix}R_0(F,\phi) \\ (F',\phi') \end{pmatrix}
\xymatrix{ \ar[r] & } \bEnd(\bbD)\begin{pmatrix} (F,\phi) \\
U(F',\phi') \end{pmatrix}
\end{equation}
\[
\beta \xymatrix{ \ar@{|->}[r] &  } \begin{bmatrix} \eta_{(F,\phi)}
\\ U\beta \end{bmatrix}.
\]
is a bijection (recall Definition~\ref{def:horizontally_universal_square}.).
The notation $\bMnd(\bbD)\begin{pmatrix}R_0(F,\phi) \\ (F',\phi') \end{pmatrix}$
indicates the set of monad squares with top horizontal arrow $R_0(F,\phi)$ and
bottom horizontal arrow $(F',\phi')$.  The notation $\bEnd(\bbD)\begin{pmatrix}
(F,\phi) \\
U(F',\phi') \end{pmatrix}$ indicates the set of endomorphism squares
with top horizontal arrow $(F,\phi)$ and bottom horizontal arrow $U(F',\phi')$.

Since we have already checked the universality of $\eta_{(Y,Q)}$ with respect to
vertical morphisms, and since squares with distinct vertical arrows are
distinct, it suffices to prove a bijection for monad squares which additionally
have the left and right vertical arrows fixed, so we consider monad squares of
the form
\[
\xymatrix{(X,P) \ar[r]^{R_0(F,\phi)} \ar[d]_{(u,\bar{u})}
\ar@{}[dr]|\beta & (Y,Q^\text{\rm free}) \ar[d]^{(v,\bar{v})} \\ (X',P')
\ar[r]_{(F',\phi')} & (Y',Q'). }
\]
We factor the map in
\eqref{equ:vertical_universality_in_proof_free_horizontal_implies_free_vertical}
(for fixed $(u,\bar{u})$ and $(v,\bar{v})$), into a sequence of
bijections.
\[
\aligned
\beta & \leftrightarrow \Lambda^{\bMnd(\bbD)}(\beta) \\
& \leftrightarrow \left[U\Lambda^{\bMnd(\bbD)}(\beta) \;\; i^v_{(1_Y,\iota_Q)} \right] \\
& \leftrightarrow \begin{array}{c} \begin{bmatrix} i^v_{(u,\bar{u})^*} & i^v \\
U\Lambda^{\bMnd(\bbD)}(\beta) & i^v_{(1_Y,\iota_Q)} \end{bmatrix} \end{array} \\
& \leftrightarrow  \begin{array}{c} \begin{bmatrix} \eta_{(F,\phi)}
\\ U\beta  \end{bmatrix} \end{array}
\endaligned
\]
The last bijection is $(\Lambda^{\bEnd(\bbD)})^{-1}$ and relies on the fact that
$U$ is compatible with the cofoldings $\Lambda^{\bMnd(\bbD)}$ and
$\Lambda^{\bEnd(\bbD)}$.
\end{proof}

\begin{rmk}
  Note that the conclusion of Theorem~\ref{thm:existence_of_free_monads}, that
  $\bbD$ admits the construction of free monads, amounts to a vertical
  double adjunction, the free-monad double functor $R$ being the {\em left} double
  adjoint.  Since $\bfV: \mathbf{DblCat_v} \to \mathbf{Cat}$ is a $2$-functor,
  we obtain (in the situation of the Theorem) also a 2-adjunction
\begin{equation*}
\xymatrix@C=4pc{\bfV \bEnd(\bbD) \ar@/^1pc/[r]^{\bfV R} \ar@{}[r]|\perp & \ar@/^1pc/[l]^{\bfV U} \bfV \bMnd(\bbD).}
\end{equation*}
\end{rmk}

\section{Existence of Eilenberg--Moore Objects}
\label{sec:Characterization_of_Existence_of_Eilenberg-Moore_Objects}

   The double functor $\bMnd(\bbD) \to \bbD$ which to a monad
   associates its underlying object, has a horizontal double
   right adjoint $\Inc_\bbD$ which to an object in $\bbD$ associates
   the trivial monad on it:
   \begin{equation*}
   \xymatrix@C=4pc{\bMnd(\bbD) \ar@/^1pc/[r]^{\Und} \ar@{}[r]|{\perp} &
   \ar@/^1pc/[l]^{\Inc_\bbD} \bbD.}
   \end{equation*}
   In this final section we analyze when $\Inc_\bbD$ has a further
   right double adjoint.

In Street's article~\cite{Street:formal-monads}, a 2-category $\bfK$
is said to {\it admit the construction of algebras} if the inclusion
2-functor $\Inc_{\bfK} \co \bfK \rightarrow \Mnd(\bfK)$ admits a right
2-adjoint $\text{Alg}_\bfK\co\Mnd(\bfK) \rightarrow  \bfK$. Synonymously,
we say $\bfK$ {\it admits Eilenberg--Moore objects}. For a monad
$(X,S)$ in $\bfK$, the object $\text{Alg}_\bfK(X,S)$ is denoted
$X^S$. A right 2-adjoint $\text{Alg}_\bfK$ exists if and only if for
each monad $(X,S)$, the presheaf
$\Mnd_{\bfK}\left(\Inc_\bfK-,(X,S)\right)$ is representable. The
representing object is then $X^S$.

The situation for monads in a double category $\bbD$ is more subtle,
as representability of the individual presheaves
$\Mnd_{\bbD}\left(\Inc_\bbD(-),(X,S)\right)$ does not suffice, and
we must consider parameterized presheaves.

\begin{defn}
Let $\bbD$ be a double category and let $\Inc_\bbD\co \bbD \rightarrow
\bMnd(\bbD)$, $I \mapsto (I,\id_I)$ be the inclusion double functor.
We say that the double category $\bbD$ {\it admits Eilenberg--Moore
objects} if $\Inc_\bbD$ admits a horizontal right double adjoint.
\end{defn}

\begin{rmk}
To an object $I$ and a monad $(X,S)$ in $\bbD$, we may associate the set
$S\Alg_I$ of $S$-algebra structures on $I$, which is the set of horizontal
monad morphisms from $(I,\id_I)$ to $(X,S)$.  This assignment extends to a
parameterized presheaf on $\bbD$ in the sense of
Definition~\ref{def:parameterized_presheaf}, namely
\begin{equation} \label{equ:S-Alg_I_as_parameterized_presheaf}
\xymatrix{\bMnd(\bbD)(\Inc_\bbD-,-)\co \bbD^{\text{\rm horop}}
\times \bbV_1\bMnd(\bbD) \ar[r] & \Sp^t}.
\end{equation}
Recall that $\bbV_1\bMnd(\bbD)$ is the double category which has the
same vertical 1-category as $\bMnd(\bbD)$, but everything else is
trivial, as in Section~\ref{sec:Notation}.
\end{rmk}

% Next we can turn to our main application of double adjunctions to
% the characterization of existence of Eilenberg--Moore objects.

\begin{thm}[Characterization of existence of Eilenberg--Moore objects]
  \label{thm:EilenbergMooreExistence}
The inclusion double functor
$$\xymatrix{\Inc_\bbD \co \bbD \ar[r] & \bMnd(\bbD)}$$
$$\xymatrix{I \ar@{|->}[r] & (I,\id)}$$
admits a horizontal right double adjoint if and only if the
parameterized presheaf
$$\xymatrix{\Alg_-\co \bbD^{\text{\rm horop}} \times \bbV_1\bMnd(\bbD)
\ar[r] & \Sp^t}$$
is (horizontally) representable in the sense of
Definition~\ref{defn:parametrized_presheaf_representability}.
% See Section~\ref{sec:Notation} for the definition of $\bbV_1$.
\end{thm}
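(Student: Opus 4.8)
The plan is to read this off from Theorem~\ref{thm:representability_characterization_of_dbladjs}. Take $\bbX := \bbD$, $\bbA := \bMnd(\bbD)$, and $F := \Inc_\bbD\co \bbD \to \bMnd(\bbD)$. That theorem then says $\Inc_\bbD$ admits a horizontal right double adjoint if and only if the parameterized presheaf
$$\bMnd(\bbD)(\Inc_\bbD-,-)\co \bbD^{\text{\rm horop}} \times \bbV_1\bMnd(\bbD) \to \Sp^t$$
is represented by a double functor $G_0\co \bbV_1\bMnd(\bbD) \to \bbD$; and being represented by such a $G_0$ is precisely horizontal representability in the sense of Definition~\ref{defn:parametrized_presheaf_representability}. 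So the theorem reduces entirely to identifying this parameterized presheaf with $\Alg_-$.

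That identification is exactly the content of the remark preceding the statement, combined with Example~\ref{exa:homsets_are_a_parametrized_presheaf} for $\bMnd(\bbD)$ precomposed with $\Inc_\bbD$ in the first variable. On objects, unwinding the definition of horizontal monad map: a horizontal morphism $(I,\id_I) \to (X,S)$ in $\bMnd(\bbD)$ is a horizontal morphism $I \to X$ in $\bbD$ together with an action square compatible with the unit and multiplication of $(X,S)$, that is, an element of $S\Alg_I$. On horizontal and vertical morphisms of $\bbD$ (the latter producing the vertical spans), on vertical morphisms of $\bMnd(\bbD)$, and on squares, both $\bMnd(\bbD)(\Inc_\bbD-,-)$ and $\Alg_-$ are given by pre- and post-composition in $\bMnd(\bbD)$ together with the action of $\Inc_\bbD$; in both cases the vertically lax coherence squares are vertical composition of monad squares and the unit coherence is the identity-square embedding. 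Thus $\Alg_-$ is literally the parameterized presheaf $\bMnd(\bbD)(\Inc_\bbD-,-)$, and the equivalence follows.

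Spelling out the two directions: given a horizontal right double adjoint $G$ of $\Inc_\bbD$, Theorem~\ref{thm:double_adjunction_descriptions}~\ref{laxapproach} furnishes a horizontal natural isomorphism $\bMnd(\bbD)(\Inc_\bbD-,-) \cong \bbD(-,G-)$ of parameterized presheaves on $\bbD^{\text{\rm horop}} \times \bMnd(\bbD)$, and restricting along the inclusion $\bbV_1\bMnd(\bbD) \hookrightarrow \bMnd(\bbD)$ exhibits $\Alg_-$ as represented by the restriction of $G$. Conversely, a representation of $\Alg_-$ is a double functor $G_0\co \bbV_1\bMnd(\bbD) \to \bbD$ together with a horizontal natural isomorphism $\Alg_- \cong \bbD(-,G_0-)$, and Theorem~\ref{thm:representability_characterization_of_dbladjs} extends $G_0$ to a horizontal right double adjoint of $\Inc_\bbD$. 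There is no real obstacle here: the substantial work --- the construction of $\bMnd(\bbD)$, the characterizations of double adjunctions, and the representability criterion --- has all been carried out in the earlier sections. The only point demanding (routine) care is checking that the identification $\Alg_- = \bMnd(\bbD)(\Inc_\bbD-,-)$ respects the vertically lax structure, so that it really is representability in the sense of Definition~\ref{defn:parametrized_presheaf_representability} that is being matched, and not merely an isomorphism of the underlying object-and-horizontal-morphism parts.
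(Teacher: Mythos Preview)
Your proposal is correct and follows essentially the same approach as the paper: apply Theorem~\ref{thm:representability_characterization_of_dbladjs} with $F=\Inc_\bbD$, and observe that $\Alg_-$ is by definition the parameterized presheaf $\bMnd(\bbD)(\Inc_\bbD-,-)$ of equation~\eqref{equ:S-Alg_I_as_parameterized_presheaf}. The paper's proof is a two-sentence version of exactly this; your additional unpacking of the identification and of the two directions is accurate but not required.
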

\begin{proof}
By Theorem~\ref{thm:representability_characterization_of_dbladjs},
the double functor $\Inc_\bbD$ admits a horizontal right double
adjoint if and only if the parameterized presheaf
\eqref{equ:S-Alg_I_as_parameterized_presheaf} is representable, but
$\Alg_-$ is \eqref{equ:S-Alg_I_as_parameterized_presheaf} by
definition.
\end{proof}

\begin{examp}
Suppose $\bfK$ is a 2-category which admits Eilenberg--Moore objects
in the sense of 2-category theory, that is, the 2-functor $\Inc_\bfK
\co \bfK \rightarrow \Mnd(\bfK)$ admits a right 2-adjoint. Then the
double category $\overline{\bbQ} \bfK$ admits Eilenberg--Moore
objects since $\overline{\bbQ} \bfK$ and $\bMnd(\overline{\bbQ}
\bfK)=\Qbar\Mnd(\bfK)$ both have cofoldings with fully faithful coholonomies,
$\Inc_{\overline{\bbQ} \bfK}$ preserves them, and $\bfH
\Inc_{\overline{\bbQ} \bfK}=\Inc_\bfK$ admits a right 2-adjoint. See
Example~\ref{exa:twistedquintets},
Proposition~\ref{prop:cofolding_on_D_induces_cofolding_on_Mnd(D)_and_on_End(D)},
and
Corollary~\ref{cor:fully_faithful_coholonomy_implies_all_adjunctions_equivalent}.
The representing functor $G\co \bbV_1\bMnd(\overline{\bbQ} \bfK)
\rightarrow \Sp^t$ for $\Alg_-$ is the transposed opposite of the
right adjoint to $\Inc_{\bfK}$.
\end{examp}

% In a future paper we will present Eilenberg--Moore objects in a
% double category as weighted double limits and treat the free
% Eilenberg--Moore completion of a double category.

\section*{Acknowledgements}
Most of the results of this paper were obtained in 2008
during the three authors' participation in the special
year on Homotopy Theory and Higher Categories at the Centre
de Recerca Matem\`atica in Barcelona, see the preprint \cite{FioreGambinoKock:DoubleMonadsII_CRM_Preprint}. We gratefully acknowledge the support and hospitality of the CRM.

Thomas M.~Fiore was supported at the University of
Chicago by NSF Grant DMS-0501208. At the Universitat Aut\`{o}noma de
Barcelona he was supported by grant SB2006-0085 of the Spanish
Ministerio de Educaci\'{o}n y Ciencia under the Programa Nacional de
ayudas para la movilidad de profesores de universidad e
investigadores espa\~noles y extranjeros. Thomas M.
Fiore was also supported by the Max Planck Institut f\"ur
Mathematik, and he thanks MPIM for its kind hospitality during Summer 2010
and Summer 2011.

Nicola Gambino would like to acknowledge also the hospitality
of the Institute for Advanced Study. This material is based upon work
supported by the National Science Foundation under agreement No.
DMS-0635607. Any opinions, findings and conclusions or recommendations
expressed in this material are those of the authors and do not
necessarily reflect the views of the National Science Foundation.

Joachim Kock was partially supported by grants MTM2006-11391 and
MTM2007-63277 of Spain and SGR2005-00606 of Catalonia.

%\bibliographystyle{scplain}
%\bibliographystyle{plain}
%\bibliography{doublecat}

\end{document}